\setlist[enumerate]{itemsep=0.5pt,topsep=1pt, partopsep=0pt}
\setlist[enumerate,1]{label={\textup{(\arabic*)}}}
\NewDocumentCommand{\definealphabet}{mmmm}
 {%
  \int_step_inline:nnn { `#3 } { `#4 }
   {
    \cs_new_protected:cpx { #1 \char_generate:nn { ##1 }{ 11 } }
     {
      \exp_not:N #2 { \char_generate:nn { ##1 } { 11 } }
     }
   }
 }
\newcommand{\vvdelta}{\bm{\delta}}
\newcommand{\vvnu}{\bm{\nu}}
\newcommand{\vvphi}{\bm{\phi}}
\newcommand{\vvpsi}{\bm{\psi}}
\newcommand{\macroTildeBold}[1]{\tilde{\bm{#1}}}
\newcommand{\macroHatBold}[1]{\widehat{\bm{#1}}}
\newcommand{\macroCheckBold}[1]{\check{\bm{#1}}}
\NewDocumentCommand{\ScalarProductFactory}{m m o}{
        \IfValueTF{#3}{
                \NewDocumentCommand{#1}{s O{} m m o}{
                        \IfBooleanTF{##1}{%
                                \IfValueTF{##5}{        %
                                        #2*{##3,##4}_{#3,##5}
                                }
                                {                       %
                                        #2*{##3,##4}_{#3}
                                }
                        }
                        {       %
                                \IfValueTF{##5}{        %
                                        #2[{##2}]{##3,##4}_{#3,##5}
                                }
                                {                       %
                                        #2[{##2}]{##3,##4}_{#3}
                                }
                        }
                }
        }
        {
                \NewDocumentCommand{#1}{s O{} m m}{
                        \IfBooleanTF{##1}{%
                                #2*{##3,##4}
                        }
                        {       %
                                #2[{##2}]{##3,##4}
                        }
                }
        }
}
\DeclarePairedDelimiter{\schDelim}{\langle}{\rangle}
\ScalarProductFactory{\sch}{\schDelim}
\DeclarePairedDelimiter{\sprodDelim}{(}{)}
\ScalarProductFactory{\sprod}{\sprodDelim}
\DeclarePairedDelimiter{\brackDelim}{[}{]}
\ScalarProductFactory{\brackprod}{\brackDelim}
\ScalarProductFactory{\sprodmu}{\sprodDelim}[\mu]
\ScalarProductFactory{\sprodeps}{\sprodDelim}[\varepsilon]
\ScalarProductFactory{\sprodmueps}{\sprodDelim}[\mu\times\varepsilon]
\ScalarProductFactory{\sprodepsmu}{\sprodDelim}[\varepsilon\times\mu]
\DeclarePairedDelimiter{\singlenorm}{|}{|}
\DeclarePairedDelimiter{\norm}{\|}{\|}
\DeclareFontFamily{U}{matha}{\hyphenchar\font45}
\DeclareFontShape{U}{matha}{m}{n}{
<-6> matha5 <6-7> matha6 <7-8> matha7
<8-9> matha8 <9-10> matha9
<10-12> matha10 <12-> matha12
}{}
\DeclareSymbolFont{matha}{U}{matha}{m}{n}
\DeclareFontFamily{U}{mathx}{\hyphenchar\font45}
\DeclareFontShape{U}{mathx}{m}{n}{
<-6> mathx5 <6-7> mathx6 <7-8> mathx7
<8-9> mathx8 <9-10> mathx9
<10-12> mathx10 <12-> mathx12
}{}
\DeclareSymbolFont{mathx}{U}{mathx}{m}{n}
\DeclareMathDelimiter{\vvvert} {0}{matha}{"7E}{mathx}{"17}%
\NewDocumentCommand{\NormFactory}{m m m}{
        \NewDocumentCommand{#1}{s O{} m o}{
                \IfBooleanTF{##1}{%
                        \IfValueTF{##4}{        %
                                #2*{##3}_{#3,##4}
                        }
                        {                       %
                                #2*{##3}_{#3}
                        }
                }
                {       %
                        \IfValueTF{##4}{        %
                                #2[{##2}]{##3}_{#3,##4}
                        }
                        {                       %
                                #2[{##2}]{##3}_{#3}%
                        }%
                }%
        }%
}%
\NormFactory{\normmu}{\norm}{\mu}
\NormFactory{\normeps}{\norm}{\varepsilon}
\NormFactory{\normmueps}{\norm}{\mu\times\varepsilon}
\NormFactory{\normepsmu}{\norm}{\varepsilon\times\mu}
\DeclarePairedDelimiter{\funbracks}{(}{)}
\NewDocumentCommand{\confuns}{s O{} m m}{
        \IfBooleanTF{#1}
        {
                C^{{#3}}\funbracks*{{#4}}
        }
        {
                C^{{#3}}\funbracks[{#2}]{{#4}}
        }
}
\NewDocumentCommand{\lebfuns}{s O{} m m}{
        \IfBooleanTF{#1}
        {
                L^{{#3}}\funbracks*{{#4}}
        }
        {
                L^{{#3}}\funbracks[{#2}]{{#4}}
        }
}
\NewDocumentCommand{\sobfuns}{s O{} m m}{
        \IfBooleanTF{#1}
        {
                H^{{#3}}\funbracks*{{#4}}
        }
        {
                H^{{#3}}\funbracks[{#2}]{{#4}}
        }
}
\NewDocumentCommand{\hsobfuns}{s O{} m m}{
        \IfBooleanTF{#1}
        {
                H_0^{{#3}}\funbracks*{{#4}}
        }
        {
                H_0^{{#3}}\funbracks[{#2}]{{#4}}
        }
}
\NewDocumentCommand{\hsobfunszeroo}{s O{} m m}{
        \IfBooleanTF{#1}
        {
                H_0^{{#3}}\funbracks*{{#4}}
        }
        {
                H_{00}^{{#3}}\funbracks[{#2}]{{#4}}
        }
}
\NewDocumentCommand{\curlfuns}{s O{} m}{
    \IfBooleanTF{#1}
    {
        H\funbracks*{\operatorname{curl}, {#3}}
    }
    {
        H\funbracks[{#2}]{\operatorname{curl}, {#3}}
    }
}
\NewDocumentCommand{\hcurlfuns}{s O{} m}{
    \IfBooleanTF{#1}
    {
        H_0\funbracks*{\operatorname{curl}, {#3}}
    }
    {
        H_0\funbracks[{#2}]{\operatorname{curl}, {#3}}
    }
}
\NewDocumentCommand{\divfuns}{s O{} m O{}}{
    \IfBooleanTF{#1}
    {
        H\funbracks*{\operatorname{div}#4, {#3}}
    }
    {
        H\funbracks[{#2}]{\operatorname{div}#4, {#3}}
    }
}
\NewDocumentCommand{\hdivfuns}{s O{} m O{}}{
    \IfBooleanTF{#1}
    {
        H_0\funbracks*{\operatorname{div}#4, {#3}}
    }
    {
        H_0\funbracks[{#2}]{\operatorname{div}#4, {#3}}
    }
}
\NewDocumentCommand{\testfuns}{s O{} m}{
    \IfBooleanTF{#1}
    {
        C^\infty_0\funbracks*{{#3}}
    }
    {
        C^\infty_0\funbracks[{#2}]{{#3}}
    }
}
\NewDocumentCommand{\restricttestfuns}{s O{} m}{
    \IfBooleanTF{#1}
    {
        C^\infty\funbracks*{\overline{{#3}}}
    }
    {
        C^\infty\funbracks[{#2}]{\overline{{#3}}}
    }
}
\NewDocumentCommand{\PolyAtMost}{m m o}{
        \IfValueTF{#3}{
                \bbP_{#1}^{#2}(#3)
        }{
                \bbP_{#1}^{#2}
        }
}
\newcommand{\dt}{\partial_t}
\DeclareMathOperator{\curl}{curl}
\let\div\relax\DeclareMathOperator{\div}{div}
\DeclarePairedDelimiterX\setc[2]{\{}{\}}{\,#1 \;\delimsize\vert\; #2\,}
\DeclarePairedDelimiter\monosetc{\{}{\}}
\NewDocumentCommand{\restrict}{s O{} m m}{
    \IfBooleanTF{#1}{
        \left.{#3}\right|_{#4}
    }{
        {#3}#2|_{#4}
    }
}
\newcommand{\avg}[1]{\{\hspace{-3.8pt}\{#1\}\hspace{-3.8pt}\}}
\newcommand{\jmp}[1]{[\![#1]\!]}
\newcommand{\defpnt}{{\hspace*{0.15em}:\hspace*{0.15em}}}
\newcommand{\dintegral}[1]{\ \mathrm{d}#1}
\newcommand{\dx}{\dintegral{x}}
\newcommand{\ds}{\dintegral{s}}
\newcommand{\domain}{Q}
\newcommand{\domainleft}{\domain_-}
\newcommand{\domainright}{\domain_+}
\newcommand{\domainlr}{\domain_\pm}
\newcommand{\Fint}{{F_{\operatorname{int}}}}
\newcommand{\nint}{\bm{n}_{\operatorname{int}}}
\newcommand{\Jvol}[1][]{\vvJ^{#1}}
\newcommand{\Jvolpm}[1][]{\Jvol[#1]_\pm}
\newcommand{\jvol}[1][]{\vvj^{#1}}
\newcommand{\rhovol}[1][]{\rho^{#1}}
\newcommand{\rhovolpm}[1][]{\rhovol[#1]_\pm}
\newcommand{\Jsurf}[1][]{\vvJ^{#1}_{\operatorname{surf}}}
\newcommand{\JsurfTwo}[1][]{J^{#1}_{\operatorname{surf},2}}
\newcommand{\JsurfThree}[1][]{J^{#1}_{\operatorname{surf},3}}
\newcommand{\rhosurf}[1][]{\rho^{#1}_{\operatorname{surf}}}
\newcommand{\Jvolh}[1][]{\vvJ^{#1}_{h}}
\newcommand{\jvolh}[1][]{\vvj^{#1}_{h}}
\newcommand{\Jsurfh}[1][]{\vvJ^{#1}_{\operatorname{surf},h}}
\newcommand{\jsurfh}[1][]{\vvj^{#1}_{\operatorname{surf},h}}
\newcommand{\JsurfhInterp}[1][]{\vcJ^{#1}_{\operatorname{surf},h}}
\newcommand{\jsurfhInterp}[1][]{\vcj^{#1}_{\operatorname{surf},h}}
\newcommand{\Eperm}[1][]{\varepsilon^{#1}}
\newcommand{\Epermpm}[1][]{\Eperm[#1]_\pm}
\newcommand{\Msus}[1][]{\mu^{#1}}
\newcommand{\Msuspm}[1][]{\Msus[#1]_\pm}
\newcommand{\sol}{c}
\newcommand{\solpm}{\sol_\pm}
\newcommand{\Msusmax}{\Msus_\infty}
\newcommand{\solmax}{\sol_\infty}
\newcommand{\Ef}[1][]{\vvE^{#1}}
\newcommand{\Hf}[1][]{\vvH^{#1}}
\newcommand{\Efpm}[1][]{\Ef[#1]_\pm}
\newcommand{\Hfpm}[1][]{\Hf[#1]_\pm}
\newcommand{\Efh}[1][]{\Ef[#1]_h}
\newcommand{\Hfh}[1][]{\Hf[#1]_h}
\newcommand{\TeFunH}[1][]{\vvphi^{#1}_h}
\newcommand{\TeFunE}[1][]{\vvpsi^{#1}_h}
\newcommand{\uf}[1][]{\vvu^{#1}}
\newcommand{\ufh}[1][]{\vvu^{#1}_h}
\newcommand{\trapedefect}[1][]{\vvdelta^{#1}}
\newcommand{\pcurlfuns}{PH(\curl, \domain)}
\newcommand{\psobfuns}[1]{PH^{#1}(\domain)}
\newcommand{\INTMOpChar}{\bm{\calC}}
\newcommand{\pMOpH}{\operatorname{\widehat{\INTMOpChar}_H}}
\newcommand{\MOpH}{\operatorname{\INTMOpChar_H}}
\newcommand{\MOpE}{\operatorname{\INTMOpChar_E}}
\newcommand{\pMOp}{\operatorname{\widehat{\INTMOpChar}}}
\newcommand{\MOp}{\operatorname{\INTMOpChar}}
\newcommand{\DompMOpH}{D(\pMOpH)}
\newcommand{\DomMOpH}{D(\MOpH)}
\newcommand{\DomMOpE}{D(\MOpE)}
\newcommand{\DompMOp}{D(\pMOp)}
\newcommand{\DomMOp}{D(\MOp)}
\newcommand{\INTDGMOpChar}{\bm{\frakC}}
\newcommand{\pDgMOpH}{\operatorname{\widehat{\INTDGMOpChar}_{H}}}
\newcommand{\DgMOpH}{\operatorname{\INTDGMOpChar_H}}
\newcommand{\DgMOpE}{\operatorname{\INTDGMOpChar_E}}
\newcommand{\pDgMOp}{\operatorname{\widehat{\INTDGMOpChar}}}
\newcommand{\DgMOp}{\operatorname{\INTDGMOpChar}}
\newcommand{\DisMOp}{\pDgMOp}
\newcommand{\DgLFPert}{\operatorname{\widehat{\bm{\frakD}}}}
\newcommand{\INTLiftOpChar}{\bm{\frakL}}
\newcommand{\LiftOp}{{\INTLiftOpChar}_{\operatorname{int}}}
\newcommand{\Hparallelspace}{\bm{H}^{1/2}_{\parallel,00}(\Fint)}
\newcommand{\HparallelspaceDual}{\bm{H}^{-1/2}_{\parallel,00}(\Fint)}
\newcommand{\TraceSpaceParallel}{\bm{H}^{-1/2}_{\parallel,00}(\div_\Fint,\Fint)}
\newcommand{\pVastH}{\widehat{V}_\ast^{\vvH}}
\newcommand{\VastH}{V^{\vvH}_\ast}
\newcommand{\VastE}{V^{\vvE}_\ast}
\newcommand{\pVast}{\widehat{V}_\ast}
\newcommand{\Vast}{V_\ast}
\newcommand{\mesh}{\calT}
\newcommand{\meshh}{\mesh_h}
\newcommand{\JextH}[1][]{\vvJ^{#1}_{\vvH}}
\newcommand{\jextH}[1][]{\vvj^{#1}_{\vvH}}
\newcommand{\Ffaces}{\calF_h}
\newcommand{\Finterior}{\calF_h^\circ}
\newcommand{\Fboundary}{\calF_h^{\partial}}
\newcommand{\Finterface}{\calF_h^{\operatorname{int}}}
\newcommand{\element}{K}
\newcommand{\nvK}{\vvn_{\element}}
\newcommand{\face}{F}
\newcommand{\nvF}{\vvn_{\face}}
\newcommand{\elementKFl}{K_{F,l}}
\newcommand{\elementKFr}{K_{F,r}}
\newcommand{\Vapprox}{V_h}
\newcommand{\VasthE}{V_{\ast,h}^{\vvE}}
\newcommand{\VasthH}{V_{\ast,h}^{\vvH}}
\newcommand{\Vasth}{V_{\ast,h}}
\newcommand{\pVasthH}{\widehat{V}_{\ast,h}^{\vvH}}
\newcommand{\pVasth}{\widehat{V}_{\ast,h}}
\newcommand{\OProj}{\Pi_h}
\newcommand{\NInterp}{\calI^h}
\newcommand{\FInterp}{\frakI^h}
\newcommand{\CFInterp}{C}
\newcommand{\regcoeff}{r_\ast}
\newcommand{\INTROpChar}{\bm{\frakR}}
\newcommand{\Rhlr}{\widehat{\INTROpChar}_{\pm}}
\newcommand{\Rhl}{\widehat{\INTROpChar}_{-}}
\newcommand{\Rhr}{\widehat{\INTROpChar}_{+}}
\newcommand{\Rh}{\widehat{\INTROpChar}}
\newcommand{\Cstab}{C_{\operatorname{stb}}}
\newcommand{\CFL}{\operatorname{CFL}}
\newcommand{\tauCFL}{\tau_{\CFL}}
\newcommand{\thetaCFL}{\theta}
\newcommand{\wellposedness}{well-posedness}
\newcommand{\Wellposedness}{Well-posedness}
\newcommand{\wellposed}{well-posed}
\newcommand{\welldefined}{well-defined}
\newcommand{\cf}{cf.}
\newcommand{\eg}{e.g.}
\newcommand{\ie}{i.e.}
\newcommand{\scsemigroup}{\(C^0\)-semigroup}
\newcommand{\meshsize}{mesh size}
\newcommand{\piecewise}{piecewise}
\newcommand{\componentwise}{component-wise}
\newcommand{\semidiscrete}{semi-discrete}
\newcommand{\semidiscretization}{semi-discretization}
\newcommand{\timestep}{time step}
\newcommand{\stepsize}{step size}
\newcommand{\submesh}{sub-mesh}
\newcommand{\seminorm}{semi-norm}
\newcommand{\quasilinear}{quasilinear}
\newcommand{\Graphene}{Graphene}
\newcommand{\statespace}{state-space}
\newcommand{\dG}{dG}
\newcommand{\leapfrog}{leapfrog}
\newcommand{\CrankNicolson}{Crank--Nicolson}
\newcommand{\discontinuous}{discontinuous}
\newcommand{\inhomogeneous}{inhomogeneous}
\newcommand{\simplicial}{simplicial}
\newtheorem{theorem}{Theorem}[section]
\newtheorem{lemma}[theorem]{Lemma}
\newtheorem{corollary}[theorem]{Corollary}
\newtheorem{assumption}[theorem]{Assumption}
\crefname{assumption}{assumption}{assumptions}
\Crefname{Assumption}{Assumption}{Assumptions}
\theoremstyle{definition}
\theoremstyle{remark}
\newtheorem{remark}[theorem]{Remark}
\numberwithin{equation}{section}
\begin{document}

\title[DGTD for linear Maxwell equations with interface current]{%
  Error analysis of DGTD for linear Maxwell equations with inhomogeneous interface conditions
}

\author[Benjamin Dörich]{Benjamin Dörich}
\address{Institute for Applied and Numerical Mathematics,
    Karlsruhe Institute of Technology, Englerstr. 2, 76131 Karls\-ru\-he, Germany}
\email{\{benjamin.doerich,julian.doerner,marlis.hochbruck\}@kit.edu}
\thanks{Funded by the Deutsche Forschungsgemeinschaft (DFG, German Research Foundation) -- Project-ID 258734477 -- SFB 1173}

\author[Julian Dörner]{Julian Dörner}

\author[Marlis Hochbruck]{Marlis Hochbruck}

\subjclass[2020]{Primary
65M12, 65M15, 65M60
Secondary 35Q60}

\date{\today}

\begin{abstract}
    In the present paper we consider linear and isotropic Maxwell equations with inhomogeneous interface conditions.
    We discretize the problem with the discontinuous Galerkin method in space and with the \leapfrog{} scheme in time.
    An analytical setting is provided in which we show \wellposedness{} of the problem, derive stability estimates, and exploit this in the error analysis to prove rigorous error bounds for both the spatial and full discretization.
    The theoretical findings are confirmed with numerical experiments.
\end{abstract}

\maketitle

\section{Introduction}\label{sec:introduction}

\Graphene{} is a monolayer of carbon atoms arranged in a hexagonal lattice and has demonstrated exceptional thermal, electrical and optical properties, as well as structural robustness. 
This makes it an attractive candidate for reinforcement in materials, as well as for applications in organic electronics and optoelectronics. 
There is significant interest in both the scientific community and industrial sectors in graphene.
In recent years, numerous other 2D materials have emerged, with the term referring to crystalline solids that have a reduced thickness, usually consisting of a single or only a few atomic layers. 
This unique characteristic gives them exceptional properties.
Another important class of composite 2D materials are the
semiconducting Transition Metal Dichalcogenides (TMDCs) such as \ce{MoS2} or \ce{WS2} which also come in single layers, albeit with a more complicated unit cell than graphene.
These materials have a wide range of applications, including catalysis, spintronics, and optoelectronics; see the reviews \cite{Cho.L.S.Et.2010,Ted.L.O.2016}.
Numerical simulations are vital for studying such materials.

The optical properties of such materials can be studied by depositing a sheet on a thin dielectric layer on top of a metal plate and exciting it with light pulses. 
The interaction between the pulses and the material is described by Maxwell equations coupled to quantum mechanical models, see, \eg{}, \cite{Cha.E.Z.Et.2008,Kan.M.2005,Sta.2014}. 
A simpler way of modeling the interaction of the 2D material in the Maxwell equations is to use conductivity surfaces or current sheets.
Here it is assumed that the material sheet has zero thickness, thus is truly two-dimensional, and the constitutive equation
\begin{equation*}
  \Jsurf(\omega) = \sigma_{\operatorname{surf}}(\omega) \Ef(\omega)
\end{equation*}
holds along the plane of the material in the frequency domain,
where \(\sigma_{\operatorname{surf}}\) describes the surface conductivity of the 2D material.
We refer to Chapter~1 in \cite{Dep.2016.GOES} for details about the modelling.

\begin{figure}[htbp]
    \centering
    \begin{subfigure}[b]{0.45\textwidth}
        \centering
        \scalebox{1}{\begin{tikzpicture}%

\definecolor{color01}{HTML}{1f77b4}
\definecolor{color02}{HTML}{ff7f0e}
\definecolor{color03}{HTML}{2ca02c}
\definecolor{color04}{HTML}{d62728}
\definecolor{color05}{HTML}{9467bd}
\definecolor{color06}{HTML}{8c564b}
\definecolor{color07}{HTML}{e377c2}
\definecolor{color08}{HTML}{7f7f7f}
\definecolor{color09}{HTML}{bcbd22}
\definecolor{color10}{HTML}{17becf}

\coordinate (zeroo) at (0,0,2);
\coordinate (zero) at (0,0,0);
\coordinate (x1) at (2.5,0,0);
\coordinate (x3) at (0,2.5,0);
\coordinate (x2) at (0,0,2.5);

\draw[black, -{Latex}] (zeroo) -- ($(zeroo) + (1,0,0)$) node[black, scale=0.5, anchor = north west] {$x_1$};
\draw[black, -{Latex}] (zeroo) -- ($(zeroo) + (0,0,-1)$) node[black, scale=0.5, anchor = north west] {$x_2$};
\draw[black, -{Latex}] (zeroo) -- ($(zeroo) + (0,1,0)$) node[black, scale=0.5, anchor = south east] {$x_3$};

\coordinate (b1) at (2,0,0);
\coordinate (b2) at (-2,0,0);
\coordinate (b3) at (-2,0,2);
\coordinate (b4) at (2,0,2);

\coordinate (t1) at ($(b1) + (0,2,0)$);
\coordinate (t2) at ($(b2) + (0,2,0)$);
\coordinate (t3) at ($(b3) + (0,2,0)$);
\coordinate (t4) at ($(b4) + (0,2,0)$);

\coordinate (i1) at (zero);
\coordinate (i2) at (0,0,2);
\coordinate (i3) at ($(i2) + (0,2,0)$);
\coordinate (i4) at ($(i1) + (0,2,0)$);

\coordinate (im) at ($0.5*(i3)$);

\draw[color01, dashed] (b1) -- (b2) -- (b3);
\draw[color01] (b3) -- (b4) -- (b1);
\draw[color01] (t1) -- (t2) -- (t3) -- (t4) -- cycle;
\draw[color01] (b1) -- (t1);
\draw[color01, dashed] (b2) -- (t2);
\draw[color01] (b3) -- (t3);
\draw[color01] (b4) -- (t4);

\draw[black, -{Latex}] (im) -- ($(im) + (0.75,0,0)$) 
node[black, scale = 0.65, anchor = north west] {$\nint$};

\node[black, anchor = north west, scale=0.75] at (t3) {$\domainleft$};
\node[black, anchor = south east, scale=0.75] at (b4) {$\domainright$};

\draw[color02, dashed] (i4) -- (i1) -- (i2);
\draw[color02] (i2) -- (i3) -- (i4);
\draw[pattern=north west lines, pattern color=color02, opacity=0.17] (i1) -- (i2) -- (i3) -- (i4) -- cycle;
\node[black, anchor = north east, xshift=0.4cm, yshift=-0.25cm, scale=0.75] at (im) {$\Fint$};

\end{tikzpicture}}
        \caption{First domain}
        \label{fig:NormalDomain}
    \end{subfigure}
    \hfill
    \begin{subfigure}[b]{0.45\textwidth}
        \centering
        \scalebox{1}{\begin{tikzpicture}%

\definecolor{color01}{HTML}{1f77b4}
\definecolor{color02}{HTML}{ff7f0e}
\definecolor{color03}{HTML}{2ca02c}
\definecolor{color04}{HTML}{d62728}
\definecolor{color05}{HTML}{9467bd}
\definecolor{color06}{HTML}{8c564b}
\definecolor{color07}{HTML}{e377c2}
\definecolor{color08}{HTML}{7f7f7f}
\definecolor{color09}{HTML}{bcbd22}
\definecolor{color10}{HTML}{17becf}

\coordinate (zeroo) at (-1,0,2);
\coordinate (zero) at (0,0,0);
\coordinate (x2) at (1,0,0);
\coordinate (x3) at (0,1,0);
\coordinate (x1) at (0,0,1);

\coordinate (b1) at (zero);
\coordinate (b2) at ($(zero) + 2*(x1)$) ;
\coordinate (b3) at ($(zero) + 2*(x1) + 2*(x2)$);
\coordinate (b4) at ($(zero) + 4*(x1) + 2*(x2)$);
\coordinate (b5) at ($(zero) + 4*(x1) + 4*(x2)$);
\coordinate (b6) at ($(zero) + 0*(x1) + 4*(x2)$);

\draw[color01, dashed] (b1) -- (b2);
\draw[color01] (b2) -- ($(zero) + 2*(x1) + 1.25*(x2)$);
\draw[color01, dashed] ($(zero) + 2*(x1) + 1.25*(x2)$) -- (b3);
\draw[color01, dashed] (b3) -- (b4);
\draw[color01] (b4) -- (b5);
\draw[color01] (b5) -- (b6);
\draw[color01, dashed] (b6) -- (b1);

\coordinate (t1) at ($(b1) + (0,2,0)$);
\coordinate (t2) at ($(b2) + (0,2,0)$);
\coordinate (t3) at ($(b3) + (0,2,0)$);
\coordinate (t4) at ($(b4) + (0,2,0)$);
\coordinate (t5) at ($(b5) + (0,2,0)$);
\coordinate (t6) at ($(b6) + (0,2,0)$);

\draw[color01] (t1) -- (t2) -- (t3) -- (t4) -- (t5) -- (t6) -- cycle;

\draw[color01, dashed] (b1) -- (t1);
\draw[color01] (b2) -- (t2);
\draw[color01, dashed] (b3) -- (t3);
\draw[color01] (b4) -- (t4);
\draw[color01] (b5) -- (t5);
\draw[color01] (b6) -- (t6);

\coordinate (i1) at (b3);
\coordinate (i2) at (b6);
\coordinate (i3) at (t6);
\coordinate (i4) at (t3);

\coordinate (dp1) at ($0.707106*(x1) + 0.707106*(x2)$);
\coordinate (dp2) at ($-0.707106*(x1) + 0.707106*(x2)$);
\coordinate (dp3) at (x3);

\coordinate (im) at ($3*(x2) + (x1) + (x3)$);

\draw[pattern=north west lines, pattern color=color02, opacity=0.17] (i1) -- (i2) -- (i3) -- (i4) -- cycle;
\draw[color02, dashed] (i4) -- (i1) -- (i2);
\draw[color02] (i2) -- (i3) -- (i4);
\draw[black, -{Latex}] ($(im) - 0.55*(dp2)  +0.45*(dp3)$) -- ($(im) +  0.707106*(1,0,1) - 0.55*(dp2) +0.45*(dp3)$) 
 node[black, scale = 0.65, anchor = north west] {$\nint$};
\node[black, anchor = north east, xshift=-0.4cm, yshift=+0.25cm, scale=0.75] at (im) {$\Fint$};

\draw[black, -{Latex}] (i1) -- ($(i1) + (dp1)$) node[black, scale=0.5, anchor = west] {$x_1$};
\draw[black, -{Latex}] (i1) -- ($(i1) + (dp2)$) node[black, scale=0.5, anchor = north west] {$x_2$};
\draw[black, -{Latex}] (i1) -- ($(i1) + (dp3)$) node[black, scale=0.5, anchor = south east] {$x_3$};

\node[black, anchor = north west, scale=0.75] at (t1) {$\domainleft$};
\node[black, anchor = south east, scale=0.75] at (b5) {$\domainright$};

\end{tikzpicture}}
        \caption{Second domain}
        \label{fig:LDomain}
    \end{subfigure}

    \vspace{1em} %

    \begin{subfigure}[b]{0.6\textwidth}
        \centering
        \scalebox{1}{\tdplotsetmaincoords{45+60}{-45+90}  %

\begin{tikzpicture}[tdplot_main_coords, scale=2]

\definecolor{color01}{HTML}{1f77b4}
\definecolor{color02}{HTML}{ff7f0e}
\definecolor{color03}{HTML}{2ca02c}
\definecolor{color04}{HTML}{d62728}
\definecolor{color05}{HTML}{9467bd}
\definecolor{color06}{HTML}{8c564b}
\definecolor{color07}{HTML}{e377c2}
\definecolor{color08}{HTML}{7f7f7f}
\definecolor{color09}{HTML}{bcbd22}
\definecolor{color10}{HTML}{17becf}

\def\radius{1}    %
\def\length{2}    %

\foreach \i in {0,1,...,5} {
    \pgfmathsetmacro\angleA{60*\i}
    \pgfmathsetmacro\angleB{60*(\i+1)}

    \coordinate (b\i) at ({0*\length}, {\radius*cos(\angleA)}, {\radius*sin(\angleA)});
    \coordinate (t\i) at ({1*\length}, {\radius*cos(\angleA)}, {\radius*sin(\angleA)});
    \coordinate (i\i) at ({0.5*\length}, {\radius*cos(\angleA)}, {\radius*sin(\angleA)});
    
    \draw[color01] 
        (0, {\radius*cos(\angleA)}, {\radius*sin(\angleA)}) -- 
        (0, {\radius*cos(\angleB)}, {\radius*sin(\angleB)});
    
    \ifthenelse{\i=2 \OR \i=3 \OR \i=4}{
      \draw[orange, dashed] 
        ({0.5*\length}, {\radius*cos(\angleA)}, {\radius*sin(\angleA)}) -- 
        ({0.5*\length}, {\radius*cos(\angleB)}, {\radius*sin(\angleB)});
    }{
      \draw[orange] 
        ({0.5*\length}, {\radius*cos(\angleA)}, {\radius*sin(\angleA)}) -- 
        ({0.5*\length}, {\radius*cos(\angleB)}, {\radius*sin(\angleB)});
    }    
    
    \ifthenelse{\i=2 \OR \i=3 \OR \i=4}{
      \draw[color01, dashed] 
        (\length, {\radius*cos(\angleA)}, {\radius*sin(\angleA)}) -- 
        (\length, {\radius*cos(\angleB)}, {\radius*sin(\angleB)});
    }{
      \draw[color01] 
        (\length, {\radius*cos(\angleA)}, {\radius*sin(\angleA)}) -- 
        (\length, {\radius*cos(\angleB)}, {\radius*sin(\angleB)});
    }

    \ifthenelse{\i=3 \OR \i=4}{
      \draw[color01, dashed]
        (0, {\radius*cos(\angleA)}, {\radius*sin(\angleA)}) -- 
        (\length, {\radius*cos(\angleA)}, {\radius*sin(\angleA)});
    }{
      \draw[color01]
        (0, {\radius*cos(\angleA)}, {\radius*sin(\angleA)}) -- 
        (\length, {\radius*cos(\angleA)}, {\radius*sin(\angleA)});
    }
}

\draw[pattern=north west lines, pattern color=color02, opacity=0.17] (i0) -- (i1) -- (i2) -- (i3) -- (i4) -- (i5) -- cycle;

\draw[black, -{Latex}] (i4) -- ($(i4) + 0.5*(-1,0,0)$) node[black, scale=0.5, anchor = north west] {$x_1$};
\draw[black, -{Latex}] (i4) -- ($(i4) + 0.5*(0,1,0)$) node[black, scale=0.5, anchor = north] {$x_2$};
\draw[black, -{Latex}] (i4) -- ($(i4) + 0.5*(0,0,1)$) node[black, scale=0.5, anchor = east] {$x_3$};

\coordinate (im) at ($({0.5*\length},0,0)$);
\draw[black, -{Latex}] (im) -- ($(im) - (0.5,0,0)$)
  node[black, scale = 0.65, anchor = north west] {$\nint$};
\node[black, anchor = north east, xshift=0.75cm, yshift=+0.55cm, scale=0.75] at (im) {$\Fint$};

\node[black, anchor = south east, scale=0.75, yshift=0.15cm, xshift=-0.25cm] at (t0) {$\domainleft$};
\node[black, anchor = south west, scale=0.75, yshift=-0.5cm, xshift=0.25cm] at (b3) {$\domainright$};

\end{tikzpicture}}
        \caption{Third domain}
        \label{fig:HexagonalDomain}
    \end{subfigure}

    \caption{Sketches of different domain configurations \(\domain\).}
    \label{fig:domains}
\end{figure}

As a first step toward the full model, we consider linear and isotropic time-dependent Maxwell equations on a polyhedral domain \(\domain \subset \bbR^3\) with plane faces.
Furthermore, we assume that \(\domain\) is composed of two plane-faced sub-polyhedra \(\domainleft\) and \(\domainright\) which share a common face \(\Fint = \overline{\domainleft} \cap \overline{\domainright}\).
We choose the coordinate frame such that \(\Fint\) is a subset of the \(x_2x_3\)-plane and the normal vector \(\nint\) on \(\Fint\) is aligned with the \(x_1\)-axis.
See \cref{fig:domains} for different examples we have in mind.
We assume that the surface current \(\Jsurf[]\) supported on \(\Fint\) is a given function.
The governing equations read
\begin{subequations}\label{eq:MaxwellEquations}
    \begin{align}
        \dt\Hfpm &= -\Msuspm[-1]\curl \Efpm, & 
        \div\Bigl(\Msuspm\Hfpm\Bigr) & = 0, \label{eq:MaxwellEquations:H}\\
        \dt\Efpm 
            &= \Epermpm[-1]\curl \Hfpm 
                - \Epermpm[-1] \Jvolpm, & 
        \div\Bigl(\Epermpm\Efpm\Bigr)
        &= \rhovolpm, \label{eq:MaxwellEquations:E}
    \end{align}
\end{subequations}
on \(\domainlr\), for \(t\geq 0\).
We denote by \(f_\pm = \restrict[\big]{f}{\domainlr}\) the restriction of a function \(f\in\lebfuns{2}{\domain}\).
Here, for \(x\in\domainlr\), \(\Ef(t,x),\Hf(t,x) \in \bbR^3\) denote the electric and magnetic field, \(\Jvol(t,x)\in\bbR^3\) the volume current density and \(\rhovol(t,x)\in\bbR\) the charge density, respectively.
We assume that the material parameters \(\Msuspm,\Epermpm \geq \delta > 0\) are constant on \(\domainlr\).
The equations are equipped with perfectly conducting boundary conditions
\begin{equation}\label{eq:PECBoundary}
        \mu\Hf\cdot \vvnu = 0,\hspace*{2em}
        \Ef\times \vvnu = 0,
\end{equation}
on \(\partial\domain\), for \(t\geq 0\) with outer unit normal vector \(\vvnu\), see, \eg{}, \cite[Sec.~I.4.2.4]{Dau.L.1990.MANM}.
At the interface \(\Fint\), the conditions
\begin{subequations}{\label{eq:InterfaceCond}}
    \begin{align}
        \jmp{\mu\Hf\cdot\nint}_\Fint &= 0, &
        \jmp{\varepsilon\Ef\cdot \nint}_\Fint &= \rhosurf, \label{eq:InterfaceCond:Normal}\\
        \jmp{\Hf\times\nint}_\Fint &= \Jsurf, &
        \jmp{\Ef\times\nint}_\Fint &= 0,\label{eq:InterfaceCond:Tangential}
    \end{align}
\end{subequations}
hold for \(t\geq 0\), where \(\nint\) denotes the inner unit normal vector on \(\Fint\) pointing from \(\domainleft\) to \(\domainright\) and \(\jmp{f}_\Fint = \restrict[\big]{f_+}{\Fint} - \restrict[\big]{f_-}{\Fint}\) denotes the jump on \(\Fint\) whenever the functions \(f_\pm\) admit well-defined traces on the interface.

By \cref*{eq:InterfaceCond:Tangential}, we obtain the condition
\begin{equation*}
    \Jsurf[] \cdot \nint = \jmp{\vvH\times\nint} \cdot \nint = 0,
\end{equation*}
since the tangential trace of the magnetic field belongs to the tangential plane of \(\nint\).
With the specific coordinate frame chosen above, this reads as 
\[
    \Jsurf(t,x) = \bigl(0,\JsurfTwo[](t,x),\JsurfThree[](t,x)\bigr).
\]

Finally, by \(\rhosurf(t,x) \in \bbR\) we denote the surface charge density, see, \eg{}, for details \cite[Sec.~I.4.2.2]{Dau.L.1990.MANM}.

\begin{remark}
    It is worth noting that, in principle, the geometric framework could be extended to accommodate curved polyhedra, to permit the sharing of multiple faces at interfaces, and to allow for a disconnected collection of interfaces.
    However, such generalizations significantly complicate the presentation and thus we focus on thin, two-dimensional materials.
\end{remark}

\subsection*{Discretization}%
The discontinuous Galerkin (\dG{}) time-domain method is a well-establish method for Maxwell equations, see, \eg{}, \cite{Hes.W.2002}.
We briefly recall the construction of the \dG{} space discretization and refer to \cref{subsec:DiscreteSetting,subsec:SpatialDiscretization} for details.

Assume that \(\meshh\) is the union of suitable meshes for \(\domainlr\) with elements \(\element\) and matching faces at \(\Fint\).
The set of all element faces \(\face\) is denoted by \(\Ffaces\).
The broken polynomial space of degree at most \(k\geq 1\) is defined as
\begin{subequations} \label{eq:BrokenandVapprox}
\begin{equation}\label{eq:BrokenPolySpace}
	\PolyAtMost{3}{k}[\meshh] = \setc[\big]{v_h\in\lebfuns{2}{\domain}}{\restrict{v_h}{\element} \in \PolyAtMost{3}{k}[\element] \text{ for all } \element\in\meshh},
\end{equation}
which are polynomials on every element \(\element\) and, in general, \discontinuous{} across element faces \(\face\).
The vector valued ansatz space for the magnetic and electric field is given by the broken finite element space of degree \(k\) defined as
\begin{equation}\label{eq:Vapprox}
	\Vapprox = \PolyAtMost{3}{k}[\meshh]^3.
\end{equation}
\end{subequations}

The \dG{} method is a non-conforming method in the sense that a function \(\vvU_h \in \Vapprox\) does not admit a \(\curl\) on the whole domain \(\domain\), \ie{}, \(\Vapprox \not\subset \curlfuns[]{\domain}\).
Therefore, we need to introduce a discretized \(\curl\) operator acting on \(\Vapprox\).
One way to do so is given by means of the central flux discretization, see, \eg{}, \cite{Hes.W.2002}.
The notation is based on \cite{Hoc.S.2016}.
We define the discrete operator \(\curl_h \defpnt \Vapprox \to \Vapprox\)
such that for all \(\vvphi_h\in\Vapprox\) it holds
\begin{equation}\label{eq:DiscreteCurlIdea}
  \begin{aligned}
    \int_{\domain}\curl_h\vvU_h\cdot\vvphi_h\dx 
    =& \sum_{\element\in\meshh} \int_{\element}\curl \vvU_h \cdot\vvphi_h\dx \\
    &- \sum_{\face\in\Ffaces} \int_{\face}\jmp{\vvU_h\times \vvn_{\face}}_\face \cdot \avg{\vvphi_h}_\face\ds.
  \end{aligned}
\end{equation}
Here, \(\jmp{\vvU_h\times\vvn_\face}_\face\) denotes the tangential jump of \(\vvU_h\) and \(\avg{\vvphi_h}_\face\) a weighted average of \(\vvphi_h\) on \(\face\) defined below \cref{eq:WeightedAverage}.
The first sum on the right hand side of \cref{eq:DiscreteCurlIdea} acts locally on single elements, thus decoupling the action of the curl while the second sum couples neighboring elements through tangential jumps.
The coupling terms are referred to as numerical fluxes and they penalize non-zero tangential jumps across faces.
We recall that functions \(\Hf\in\curlfuns{\domain}\) have zero tangential jumps across faces, \ie{},
\begin{equation*}
  \jmp{\Hf \times \vvn_{\face}}_\face = 0. 
\end{equation*}
By \(\curl_{h,0} \defpnt \Vapprox \to \Vapprox\) we denote a discrete operator that additionally enforces homogeneous tangential boundary conditions of the electric field \cref{eq:PECBoundary}.

In order to incorporate the surface current \(\Jsurf[]\), we follow the idea of \cref{eq:DiscreteCurlIdea}, but instead of penalizing zero tangential jumps, we apply the inhomogeneous interface condition \cref{eq:InterfaceCond:Tangential} for all faces \(\face\in\Finterior\) with \(\face\subset\Fint\), \ie{},
\begin{equation*}
  \jmp{\Hf\times \vvn_\face}_\face = \restrict[\big]{\Jsurf[]}{\face}.
\end{equation*}
Equation \cref{eq:DiscreteCurlIdea} motivates to define an extension \(\Jsurfh[] \in \Vapprox\)
via
\begin{equation}\label{eq:DiscreteLiftIdea}
  \int_\domain \Jsurfh\cdot \vvphi_h\dx
  = \sum_{\face\in\Ffaces, \face\subset\Fint} \int_{\face} \Jsurf[]\cdot \avg{\vvphi_h}\ds
\end{equation}
for all \(\vvphi_h \in \Vapprox\).
Note that \(\Jsurf[]\) is defined only on the interface \(\Fint\) whereas \(\Jsurfh[]\) is defined on the whole domain \(\domain\), with support only on elements adjacent to \(\Fint\).
We end up with the following spatially discrete system of differential equations
\begin{subequations}\label{eq:FirstHandwavingDiscretization}
  \begin{align}
    \dt \Hfh(t) &= -\Msus[-1]\curl_{h,0}\Efh(t),\\
    \dt \Efh(t) &= \Eperm[-1]\curl_h\Hfh(t) - \Jvolh(t) - \Jsurfh(t),
  \end{align}
\end{subequations}
for \(t\geq 0\).
Here, the inhomogeneous interface conditions \cref{eq:InterfaceCond:Tangential} are incorporated by \(\Jsurfh[]\) that acts like an artificial current on the evolution of the electric field.
We refer to \cref{subsec:DiscreteSetting,subsec:SpatialDiscretization} for the precise definitions.

We integrate the spatially discrete system in time by the second order \leapfrog{} method.
Let \(\tau > 0\) be the \timestep{} size and \(t_n = n\tau\) for \(n\in\bbN\).
The fully discrete scheme then reads
\begin{align*}
  \Hfh[n+1/2] - \Hfh[n] &= -\frac{\tau}{2}\Msus[-1]\curl_{h,0} \Efh[n],\\
  \Efh[n+1] - \Efh[n] &= \tau\Eperm[-1]\curl_h \Hfh[n+1/2]
    - \frac{\tau}{2}(\Jvolh[n] + \Jvolh[n+1])
    - \frac{\tau}{2}(\Jsurfh[n] + \Jsurfh[n+1]),\\
  \Hfh[n+1] - \Hfh[n+1/2] &= -\frac{\tau}{2} \Msus[-1]\curl_{h,0} \Efh[n+1],
\end{align*}
for \(n\geq 0\), starting from appropriate initial values \(\bigl(\Hfh[0], \Efh[0]\bigr)\in \Vapprox^2\).
Other time integration schemes can be applied to \cref{eq:FirstHandwavingDiscretization} as well.

It is well known that the explicit \leapfrog{} scheme exhibits a \stepsize{} restriction, which is also known as the Courant-Friedrichs-Lewy (CFL) condition.
The scheme is only stable for \timestep{} sizes \(\tau < \tauCFL\), with \(\tauCFL \sim h_\text{min}\), where \(h_\text{min}\) denotes the diameter of the smallest element \(\element\in\meshh\).

\subsection*{Contributions of the paper}%

The challenges associated with interface problems have been thoroughly investigated from both analytical and numerical perspectives, albeit within a geometric framework different from the one specified earlier. 
In that context, it is assumed that a positive distance exists between the interface and the domain boundary. 
For example, \wellposedness{} and regularity of \quasilinear{} Maxwell equations for such a geometric setting is found in \cite{Sch.S.2022}.
From a numerical point of view, finite element methods have been explored in \cite{Che.Z.1998} concerning elliptic and parabolic problems, and in \cite{Dek.2017,Dek.S.2012} for hyperbolic equations.
However, these results are not applicable to the problem described in \eqref{eq:MaxwellEquations}, \eqref{eq:PECBoundary} and \eqref{eq:InterfaceCond}.

In a recent study by Dörich and Zerulla \cite{Dor.Z.2023}, a different technique is employed to establish \wellposedness{} and regularity for the model problem \eqref{eq:MaxwellEquations}, \eqref{eq:PECBoundary} and \eqref{eq:InterfaceCond}.
on the cuboid domain depicted in \cref{fig:NormalDomain}.
From a numerical perspective, the discontinuous Galerkin time-domain method was successfully applied to an interface problem concerned with \Graphene{} sheets in the above mentioned setting, see, \eg{}, \cite{Wer.W.M.Et.2015,Wer.K.B.Et.2016}.
There, the focus is on the physical modelling of such sheets.
Their excellent numerical results motivate a thorough mathematical error analysis.

In this paper, we provide a mathematical framework that is suitable for both, analysis and numerics of the problem at hand. 
We prove \wellposedness{} and stability for the governing equations building up on the techniques in \cite{Dor.Z.2023}.
Transferring the ideas from analysis, a rigorous spatial and full discretization error analysis is provided for the numerical scheme.
Under suitable regularity conditions on the exact solution, we prove that the error of the scheme is of second order in time and of \(k\)th order in space with respect to the \(L^2\)-norm, \ie{},
\begin{equation*}
  \norm{\bigl(\Hf,\Ef\bigr)(t_n) - (\Hfh[n], \Efh[n])}_{\lebfuns{2}{\domain}^3 \times\lebfuns{2}{\domain}^3} \leq C(\tau^2 + h^{k}), \quad 0 \leq t_n \leq T.
\end{equation*}
Furthermore, the results are underpinned by several numerical examples showing the sharpness of the estimates with respect to spatial regularity.

Note that the results are consistent with the case where the surface current vanishes, \ie{}, \(\Jsurf[] = 0\).
However, new techniques are required for \(\Jsurf[] \neq 0\).
We address the problems in brief.

One of the challenges is that by the interface condition \cref{eq:InterfaceCond:Tangential}, the \statespace{} \(\curlfuns{\domain}\times \hcurlfuns{\domain}\), typically used for the evolution of linear Maxwell equations, is no longer suitable for the problem described by \eqref{eq:MaxwellEquations}, \eqref{eq:PECBoundary} and \eqref{eq:InterfaceCond}.
Circumventing this, we enlarge the \statespace{} with functions \(\vvV\in\lebfuns{2}{\domain}^3\) that only possess a weak \(\curl\) on each sub-polyhedra, \ie{}, \(\vvV_\pm \in \curlfuns{\domainlr}\).
This causes several problems both from an analytical and numerical perspective.
Analytically, \scsemigroup{} techniques are no longer applicable and numerically, we must handle a non-consistent discretization.
Motivated by the treatment of \inhomogeneous{} Dirichlet boundary conditions, we modify the problem that we can treat it in a standard way. 
Nonetheless, since the interface \(\Fint\) intersects with the boundary, special care is necessary to treat the perfectly conducting boundary conditions \cref{eq:PECBoundary} correctly in the modified problem.
For this, we extend the ideas from \cite{Dor.Z.2023} to the more general geometric setting described above.
They turn out to be essential for both analysis and numerics.

\subsection*{Structure of the paper}%

In \cref{sec:Wellposedness}, we first introduce a suitable analytical framework for
the problem described by \eqref{eq:MaxwellEquations}, \eqref{eq:PECBoundary} and \eqref{eq:InterfaceCond}.
We proceed by presenting the main result of this section concerning an existence and stability result for the analytical problem.
With the strategy of proof outlined, we introduce an important extension result that is later used frequently.
The section is closed with the proof of the main result.

\Cref{sec:SpatialDiscretization} is concerned with space discretization.
We first provide a standard description of the \discontinuous{} Galerkin method and point out in detail how inhomogeneous interface problems are treated.
After that, the main result of this section provides an error bound on the \semidiscretization.
The section carries on with the discussion of an important extension of the \semidiscrete{} scheme utilizing a nodal interpolation on the interface.
The section again closes with the remaining proofs.

\Cref{sec:SpatialDiscretization} is concerned with space discretization.
We provide a detailed description of the \discontinuous{} Galerkin method and point out in detail how inhomogeneous interface problems are treated.
We present the main result of this section consisting of error bounds for the \semidiscretization{} as well as an important extension of the \semidiscrete{} scheme utilizing a nodal interpolation on the interface.
The section again closes with the remaining proofs.

The main result of \cref{sec:FullDiscretization} is concerned with an error bound on 
the full discretization.
We first prove stability of the scheme and provide afterwards the proof of the main result.

In \cref{sec:NumericalExperiments}, we provide several numerical experiments in a two-dimensional setting that confirm our theoretical findings.

\section{\Wellposedness{}}\label{sec:Wellposedness}

\subsection*{General setting and notation}\label{subsec:Setting}

The volume charge density \(\rhovol\) is determined by the volume current \(\Jvol\) through
\begin{equation}\label{eq:VolumeChargeCurrentRelation}
    \rhovolpm(t) = \rhovolpm(0) + \int_0^t \div \Jvolpm(s)\ \mathrm{d}s
\end{equation}
on \(\domainlr\), for \(t\geq 0\). Equation \cref{eq:VolumeChargeCurrentRelation} is called the continuity relation for electricity.
It is well known that the divergence conditions \cref{eq:MaxwellEquations:H,eq:MaxwellEquations:E} and the magnetic boundary condition in \cref{eq:PECBoundary} hold, if they are valid for \(t=0\) and \cref{eq:VolumeChargeCurrentRelation} holds.
We refer to \cite[Sec.~I.4.1.2]{Dau.L.1990.MANM} for details.

A similar relation exists for the surface charge density \(\rhosurf\). 
It is determined by the volume current \(\Jvol\) and the surface current \(\Jsurf\) through
\begin{equation}\label{eq:SurfaceChargeCurrentRelation}
    \rhosurf(t) = \rhosurf(0) + \int_0^t \div_\Fint \Jsurf(s) - \jmp{\Jvol(s)\cdot\nint}_\Fint\ \mathrm{d}s
\end{equation}
on \(\Fint\), for \(t\geq 0\), where \(\div_\Fint\) denotes the two-dimensional divergence on \(\Fint\).
It is shown in \cite[Lemma~8.1]{Sch.S.2022} that equations \cref{eq:InterfaceCond:Normal} are valid for \(t\geq 0\) if they are valid for \(t=0\) and \cref{eq:SurfaceChargeCurrentRelation} holds.
Thus, it remains to solve the \(\curl\)-equations in \cref{eq:MaxwellEquations} subject to the boundary conditions \cref{eq:PECBoundary} and the tangential interface conditions \cref{eq:InterfaceCond:Tangential}.

The speed of light is denoted by \(\solpm=(\Msuspm\Epermpm)^{-1/2}\) and we use the notation
\begin{equation*}
  \eta_\infty = \max\monosetc{\eta_-,\eta_+},
  \quad
  \eta \in \monosetc{\Eperm[],\Msus[],\sol}
\end{equation*}
for piecewise defined constants.
We employ the weighted inner \(L^2\)-products
\begin{equation} \label{eq:weightedL2}
    \sprodmu{\cdot}{\cdot} = \sprod{\mu\cdot}{\cdot}_{\lebfuns{2}{\domain}},
    \hspace{1em} \sprodeps{\cdot}{\cdot} = \sprod{\varepsilon\cdot}{\cdot}_{\lebfuns{2}{\domain}},
    \hspace{1em} \sprodmueps{\cdot}{\cdot} = \sprodmu{\cdot}{\cdot} + \sprodeps{\cdot}{\cdot}
\end{equation}
and their induced norms \(\normmu{\cdot}\), \(\normeps{\cdot}\) and \(\normmueps{\cdot}\).
Note that they are equivalent to the standard \(\lebfuns{2}{Q}\)-norm.

The spaces \(\sobfuns{s}{\domain}\) for \(s>0\) denote fractional Sobolev spaces.
We write \(\norm{\cdot}_{\sobfuns{s}{\domain}}\) and \(\singlenorm{\cdot}_{\sobfuns{s}{\domain}}\) for their associated norms and semi-norms and refer to \cite{Edm.E.2023.FSSI} for details.
On the interface \(\Fint\), we denote by \(H^{1/2}(\Fint)\) the fractional Sobolev space, and by  \(H^{-1/2}(\Fint)\) its dual with pivot \(L^2(\Fint)\).

We introduce the space of functions that exhibit a weak variation \(\curl\)
\begin{equation*}
  \curlfuns[]{\domain} = \setc[\big]{\vvV\in\lebfuns{2}{\domain}^3}{\curl \vvV \in\lebfuns{2}{\domain}^3},
\end{equation*}
as well as the subspace that contains functions with a vanishing tangential trace
\begin{equation*}
  \hcurlfuns{\domain} = \setc[\big]{\vvV \in \curlfuns{\domain}}{\restrict{\vvV\times \vvnu}{\Gamma} = 0}.
\end{equation*}
Since we deal with solutions that lag regularity across the interface \(\Fint\), we employ the notion of \piecewise{} spaces.
For \(s > 0\) we denote the \piecewise{} Sobolev spaces
\begin{equation*}
  \psobfuns{s} = \setc[\big]{v\in\lebfuns{2}{\domain}}{v_\pm\in\sobfuns{s}{\domainlr}},
\end{equation*}
and we define analogously
\begin{equation*}
    \pcurlfuns = \setc[\big]{\vvV\in\lebfuns{2}{\domain}^3}{\curl \vvV_\pm \in \lebfuns{2}{\domainlr}^3}.
\end{equation*}
Note that we use the same symbol for both \(\curl\)-operators.
The following connection between \(\curlfuns{\domain}\) and \(\pcurlfuns\) is a simple corollary of Green's formula; see, for example, \cite[Thm.~2.11]{Gir.R.1986.FEMNa}.
\begin{corollary}\label{cor:VanishingJump}
  Let \(\vvV\in\pcurlfuns\).
  It holds \(\vvV\in\curlfuns{\domain}\) if and only if
  \begin{equation*}
    \jmp{\vvV\times\nint}_\Fint = 0,\quad \text{in } \sobfuns{-1/2}{\Fint}^3.
  \end{equation*}
\end{corollary}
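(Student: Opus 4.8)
The plan is to reduce the claim to the distributional definition of the weak curl and to exhibit the \piecewise{} curl $\pcurl\vvV$, characterized by $(\pcurl\vvV)_\pm = \curl\vvV_\pm \in \lebfuns{2}{\domainlr}^3$, as the unique candidate for a global weak curl. Fixing an arbitrary test field $\vvphi\in\testfuns{\domain}^3$, the first step is to split
\begin{equation*}
  \int_\domain \vvV\cdot\curl\vvphi\dx
  = \int_{\domainleft}\vvV_-\cdot\curl\vvphi\dx + \int_{\domainright}\vvV_+\cdot\curl\vvphi\dx .
\end{equation*}
Because $\vvV_\pm\in\curlfuns{\domainlr}$, one may integrate by parts on each \subcuboid{} separately.

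On each subdomain I apply the Green's formula for the $\curl$-operator from \cite[Thm.~2.11]{Gir.R.1986.FEMNa}, whose boundary term is the duality pairing of the tangential trace $\vvV_\pm\times\vvn$ with $\vvphi$ over $\partial\domainlr$. Since $\vvphi$ is compactly supported in $\domain$, every contribution on $\partial\domain$ vanishes and only the interface $\Fint$ remains. Using that the outer unit normal of $\domainleft$ on $\Fint$ equals $\nint$ while that of $\domainright$ equals $-\nint$, the two interface contributions combine, up to sign, into a single pairing of $\jmp{\vvV\times\nint}_\Fint$ with $\vvphi$, which gives
\begin{equation*}
  \int_\domain \vvV\cdot\curl\vvphi\dx
  = \int_\domain \pcurl\vvV\cdot\vvphi\dx
    - \int_\Fint \jmp{\vvV\times\nint}_\Fint\cdot\vvphi\dS ,
\end{equation*}
where the last term is understood as the $\sobfuns{-1/2}{\Fint}$--$\sobfuns{1/2}{\Fint}$ duality pairing. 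From this identity both directions follow at once: if $\vvV\in\curlfuns{\domain}$, then by uniqueness of the weak curl the global curl must coincide with $\pcurl\vvV$, forcing the interface pairing to vanish for every $\vvphi$; conversely, if the pairing vanishes, then $\pcurl\vvV$ serves as the global weak curl and hence $\vvV\in\curlfuns{\domain}$.

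The only genuinely delicate point is the passage between ``the interface pairing vanishes for all $\vvphi\in\testfuns{\domain}^3$'' and ``$\jmp{\vvV\times\nint}_\Fint = 0$ in $\sobfuns{-1/2}{\Fint}$''. This rests on the surjectivity and density properties of the tangential trace map recalled in \cite[Thm.~2.11]{Gir.R.1986.FEMNa}, namely that the traces $\restrict{\vvphi}{\Fint}$ of test fields are dense in the relevant $\sobfuns{1/2}{\Fint}$ trace space, so that vanishing against all of them forces the element of the dual to be zero. Extra care is needed here because $\Fint$ abuts the boundary $\partial\domain$, so that the natural trace space carries homogeneous conditions along $\partial\Fint\subset\partial\domain$; since every $\vvphi\in\testfuns{\domain}^3$ vanishes near $\partial\domain$, its restriction to $\Fint$ lies in, and exhausts a dense subset of, precisely this space, which closes the argument. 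I expect this density/duality bookkeeping at the relative boundary of $\Fint$ to be the main obstacle, whereas the integration by parts itself is routine.
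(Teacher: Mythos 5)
Your proof is correct and follows exactly the route the paper intends: the paper gives no written-out proof but simply calls the statement a simple corollary of Green's formula, citing \cite[Thm.~2.11]{Gir.R.1986.FEMNa}, and your argument is precisely the standard fleshing-out of that citation — piecewise integration by parts, cancellation of the outer boundary terms for compactly supported test fields, and identification of the piecewise curl as the unique candidate for the global weak curl. Your explicit attention to the duality/density bookkeeping at the relative boundary of \(\Fint\) (where traces of \(\testfuns{\domain}^3\)-fields land in the \(H^{1/2}_{00}\)-type space, matching the \(\sobfuns{-1/2}{\Fint}\) interpretation of the jump) is a point the paper glosses over entirely, and you handle it correctly.
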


\begin{subequations}\label{eq:ContinuousMaxwellOperators}
  We define the Maxwell operators acting on the magnetic and electric field respectively as
  \begin{alignat}{3}\label{eq:PiecewiseOperators}
    \pMOpH &\defpnt \DompMOpH = \pcurlfuns \to \lebfuns{2}{\domain}^3,\ 
    &&\vvH &&\mapsto \varepsilon^{-1}\curl\vvH,\\
    \MOpE &\defpnt \DomMOpE = \hcurlfuns{\domain} \to \lebfuns{2}{\domain}^3,\ 
    &&\vvE &&\mapsto \mu^{-1}\curl \vvE.
  \end{alignat}
The operator acting on the combined field \(\vvu = \bigl(\vvH,\vvE\bigr)\) is defined as
\begin{equation}\label{eq:CombinedMaxwellOperators}
  \pMOp \defpnt \DompMOp = \DompMOpH \times \DomMOpE \to \lebfuns{2}{Q}^6,\ 
  \pMOp = \begin{pmatrix}
    0 && -\MOpE\\
    \pMOpH && 0
  \end{pmatrix}.
\end{equation}
We emphasize that \(\curlfuns{\domain} \subset \pcurlfuns\) and define the restricted operators
\begin{align}\label{eq:NonPiecewiseOperators}
  \MOpH \defpnt \DomMOpH &= \curlfuns{\domain} \to \lebfuns{2}{\domain}^3, 
  & \MOpH &= \restrict{\pMOpH}{\DomMOpH},\\
  \MOp \defpnt \DomMOp &= \curlfuns{\domain} \times \hcurlfuns{\domain} \to \lebfuns{2}{\domain}^6,
  & \MOp &= \restrict{\pMOp}{\DomMOp}.
\end{align}
Note that operators with a hat are always associated with \piecewise{} domains.
We stick to this notation throughout the paper.
\end{subequations}

The Maxwell equations now read: seek \(\bigl(\vvH(t),\vvE(t)\bigr)\in \DompMOpH \times \DomMOpE\) such that
\begin{subequations}\label{eq:MaxwellFormulation}
  \begin{align}
    \dt \vvH &= -\MOpE \vvE                               &&\text{in }[0,T]\times \domain,\\
    \dt \vvE &= \pMOpH \vvH - \varepsilon^{-1} \Jvol   &&\text{in }[0,T]\times \domain,\\
    \vvH(0) &= \vvH^0,\quad \vvE(0) = \vvE^0                               &&\text{in }\domain,\\
    \jmp{\vvH\times \nint}_\Fint &= \Jsurf                            &&\text{on }[0,T]\times \Fint,\label{eq:MaxwellSystem:InterfaceCondition}%
  \end{align}
\end{subequations}

\subsection*{Surface current}\label{subsec:SurfaceCurrentSpaces}

As mentioned in the introduction, the normal component of the surface current vanishes, \ie{}, \(\Jsurf[] \cdot \nint = 0\).
Therefore, we can always identify \(\Jsurf[]\) with a two-dimensional vector-valued function in the tangential plane described by \(\nint\).
We will therefore use the same notation \(\Jsurf[]\) interchangeably as a three-dimensional vector-valued function perpendicular to the normal vector \(\nint\) and as a two-dimensional function on \(\Fint\) identified as a subset of \(\bbR^2\).

The surface current needs to satisfy certain compatibility conditions for the problem to be \wellposed{}.
This boils down to whether \(\Jsurf[]\) is extensible by zero at the boundary of \(\Fint\) in the sense of \(H^{1/2}\).
This leads to a delicate notion of spaces, typically denoted with \(H^{1/2}_{00}\) in the literature, caused by the difficulty of assigning a meaning to traces of functions with Sobolev-regularity \(1/2\). 

We choose here a definition that is intrinsic to \(\Fint\) interpreted as a bounded polygon in \(\bbR^2\) and, therefore, independent of both polyhedra \(\domainlr\).
We follow the construction and notation in \cite{Buf.C.2001,Buf.C.2001b}, and we stick to the notation of this paper.
\begin{figure}
    \centering
    \scalebox{1}{\definecolor{color01}{HTML}{1f77b4}
\definecolor{color02}{HTML}{ff7f0e}
\definecolor{color03}{HTML}{2ca02c}
\definecolor{color04}{HTML}{d62728}
\definecolor{color05}{HTML}{9467bd}
\definecolor{color06}{HTML}{8c564b}
\definecolor{color07}{HTML}{e377c2}
\definecolor{color08}{HTML}{7f7f7f}
\definecolor{color09}{HTML}{bcbd22}
\definecolor{color10}{HTML}{17becf}

\begin{tikzpicture}[scale=1.25]

  \coordinate (A) at (-3.0,-1.0);
  \coordinate (B) at (-2.2,1.2);
  \coordinate (C) at (0.2,1.6);
  \coordinate (D) at (1.0,0.3);
  \coordinate (E) at (2.5,1.0);
  \coordinate (F) at (3.0,-0.6);
  \coordinate (G) at (0.5,-1.5);

  \draw[pattern=north west lines, pattern color=color02, opacity=0.17]
    (A) -- (B) -- (C) -- (D) -- (E) -- (F) -- (G) -- cycle;
    \draw[color02] (A) -- (B) -- (C) -- (D) -- (E) -- (F) -- (G) -- cycle;

  \newcommand{\normal}[4]{
    \path let \p1 = (#1), \p2 = (#2),
          \n1 = {0.5*(\x1+\x2)},
          \n2 = {0.5*(\y1+\y2)} in
          coordinate (#3) at (\n1,\n2);

    \fill (#3) circle(0.5pt);

   \draw[-{Latex}, color01] let 
        \p1 = (#2), 
        \p2 = (#1),
        \n3 = {\x1 - \x2},      %
        \n4 = {\y1 - \y2},      %
        \n5 = {0.1*veclen(\n3,\n4)},%
        \n6 = {-1*(\n4/\n5)},     %
        \n7 = {\n3/\n5}         %
    in
        (#3) -- ++({\n6},{\n7}) node[xshift={0.5*\n6}, yshift={0.5*\n7}] {$\nu_{#4}$};

    \draw[-{Latex}, color01] let 
        \p1 = (#2), 
        \p2 = (#1),
        \n3 = {\x2 - \x1},      %
        \n4 = {\y2 - \y1},      %
        \n5 = {0.1*veclen(\n3,\n4)},%
        \n6 = {\n3/\n5},     %
        \n7 = {\n4/\n5},         %
        \n{xs} = {0.25*\n6 + 0.5*\n7},
        \n{ys} = {0.25*\n7 - 0.5*\n6}
    in
        (#3) -- ++({\n6},{\n7}) node[xshift=\n{xs}, yshift=\n{ys}] {$\tau_{#4}$};

    \path let 
        \p1 = (#2), 
        \p2 = (#1),
        \n3 = {\x1 - \x2},      %
        \n4 = {\y1 - \y2},      %
        \n5 = {0.1*veclen(\n3,\n4)},%
        \n6 = {-1*(\n4/\n5)},     %
        \n7 = {\n3/\n5}         %
    in
        ($(#3) - ({0.75*\n6},{0.75*\n7})$) node[color01] {$\Gamma_{#4}$};
  }

  \normal{A}{B}{MAB}{1}
  \normal{G}{A}{MGA}{2}
  \normal{F}{G}{MFG}{3}
  \normal{E}{F}{MEF}{4}
  \normal{D}{E}{MDE}{5}
  \normal{C}{D}{MCD}{6}
  \normal{B}{C}{MBC}{7}

  \node (0,0) {$\Fint$};

  \draw[-{Latex}, thick] 
    let 
        \p1 = (A),
        \p2 = (B),
        \n1 = {\x1 - \x2},
        \n2 = {\y1 - \y2},
        \n{length} = {0.05*veclen(\n1,\n2)},
        \n{x} = {\n1/\n{length}},
        \n{y} = {\n2/\n{length}},
        \n6 = {-1*(\n{y})},     %
        \n7 = {\n{x}}
    in
        (B) -- ++({\n{x}},{\n{y}}) node[xshift={-0.35*\n6}, yshift={-0.35*\n7}] {\(x_2\)};
    
    \draw[-{Latex}, thick] 
    let 
        \p1 = (A),
        \p2 = (B),
        \n1 = {\x1 - \x2},
        \n2 = {\y1 - \y2},
        \n{length} = {0.05*veclen(\n1,\n2)},
        \n{x} = {\n1/\n{length}},
        \n{y} = {\n2/\n{length}},
        \n6 = {-1*(\n{y})},     %
        \n7 = {\n{x}}
    in
        (B) -- ++({\n{6}},{\n{7}}) node[xshift={0.35*\n{x}}, yshift={0.35*\n{y}}] {\(x_3\)};

\end{tikzpicture}}
    \caption{Interface \(\Fint\) in the \(x_2x_3\)-plane with edges \(\Gamma_i\), tangential vectors \(\tau_i\) and outer normal vectors \(\nu_i\).}
    \label{fig:InterfaceWithEdges}
\end{figure}
Doing so, we denote the edges of \(\Fint\) as \(\Gamma_i \subset \bbR^2\) for \(i\in\monosetc{1,\ldots,N_{\Fint}}\).
The outer unit normal of \(\Gamma_i\) is denoted with \(\nu_i\in\bbR^2\) and the tangential vector of \(\Gamma_i\) as \(\tau_i\in\bbR^2\).
See \cref{fig:InterfaceWithEdges} for an example.

We further define the distance functions
\begin{equation*}
  \delta_{\Gamma_i}(x) = \operatorname{dist}(x,\Gamma_i), \text{ for } x\in\bbR^2.
\end{equation*}

The space of functions on \(\Fint\) with Sobolev-regularity \(1/2\) which are extensible by zero to the whole of \(\bbR^2\) is defined as

\begin{align*}
  \hsobfunszeroo[]{1/2}{\Fint} &= \setc[\big]{f\in\sobfuns[]{1/2}{\Fint}}{\delta_{\Gamma_i}^{-1/2}f \in \lebfuns[]{2}{\Fint}, i=1,\ldots,N_{\Fint}},\\
  \norm{f}_{\hsobfunszeroo[]{1/2}{\Fint}}^2 &=
    \norm{f}_{\sobfuns[]{1/2}{\Fint}}^2 + \sum_{i=1}^{N_{\Fint}} \norm{\delta_{\Gamma_i}^{-1/2}f}_{\lebfuns[]{2}{\Fint}}^2
\end{align*}
The additional constraints thus penalize functions that do not decay rapidly enough to zero in close neighborhood to the boundary of \(\Fint\).
The dual space of \(\hsobfunszeroo[]{1/2}{\Fint}\) with respect to the pivot space \(\lebfuns[]{2}{\Fint}\) is denoted with \(\hsobfunszeroo[]{-1/2}{\Fint}\) and equipped with the introduced dual-norm.

Since we deal with vector-valued functions, we further introduce the space of vector fields with Sobolev-regularity \(1/2\) that are extensible by zero parallel to the edges as
\begin{align*}
  \Hparallelspace &= \setc[\big]{\vvF\in\sobfuns[]{1/2}{\Fint}^2}{\delta_{\Gamma_i}^{-1/2}\vvF\cdot\tau_i \in \lebfuns[]{2}{\Fint}^2, i=1,\ldots,N_{\Fint}},\\
  \norm{\vvF}_{\Hparallelspace}^2 &=
    \norm{\vvF}_{\sobfuns[]{1/2}{\Fint}^2}^2 + \sum_{i=1}^{N_{\Fint}} \norm{\delta_{\Gamma_i}^{-1/2}\vvF\cdot \tau_i}_{\lebfuns[]{2}{\Fint}^2}^2
\end{align*}
The dual with respect to \(\lebfuns[]{2}{\Fint}^2\) is denoted with \(\HparallelspaceDual\) and also equipped with the introduced dual-norm.

We introduce yet another space of functions with Sobolev-regularity \(3/2\), defined by
\begin{equation*}
  \hsobfunszeroo[]{3/2}{\Fint} = \setc{f\in \hsobfuns[]{1}{\Fint}}{
    \nabla_{\Fint} F \in \Hparallelspace 
  },
\end{equation*}
endowed with the associated graph norm.
Note that this notation suggests that there are no compatibility conditions in the direction of the normal derivatives.
And indeed, this definition is equivalent with norms to the space \(\hsobfuns[]{1}{\Fint}\cap H^{3/2}(\Fint)\).
We refer to \cite[Rem.~3.12]{Buf.C.2001} for details.
The dual of \(\hsobfunszeroo[]{3/2}{\Fint}\) with respect to \(\lebfuns[]{2}{\Fint}\) is denoted with \(\sobfuns[]{-3/2}{\Fint}\).
We define \(\div_{\Fint} \defpnt \HparallelspaceDual \to \sobfuns[]{-3/2}{\Fint}\) via duality by
\begin{equation*}
  \sch{\div_{\Fint} \vvF}{f}_{3/2}
    = -\sch{\vvF}{\nabla_{\Fint}f}_{\parallel,1/2},
    \quad f\in \hsobfunszeroo[]{3/2}{\Fint},
\end{equation*}
with the dual pairing in \(\hsobfunszeroo[]{3/2}{\Fint}\) and in \(\Hparallelspace\) on the left- and right-hand side, respectively.

We finally define the space
\begin{equation}\label{def:TraceSpaceParallel}
  \TraceSpaceParallel
  = \setc{\vvF\in \HparallelspaceDual}{\div_\Fint \vvF \in \hsobfunszeroo[]{-1/2}{\Fint}},
\end{equation}
endowed with the graph norm.
This space will suffice as a domain for the spatial component of the surface current, since the tangential jump is surjective onto this space.
The following result follows from \cite[Thm.~6.6]{Buf.C.2001b}.
\begin{corollary}\label{cor:NewExtensionCoro}
  The tangential jump 
  \[
    \jmp{\cdot \times \nint}_\Fint\defpnt \pcurlfuns \to \TraceSpaceParallel,
    \quad
    \vvJ\to \jmp{\vvJ\times \nint}_{\Fint}
  \]
  is a linear, bounded and surjective map.
  It admits a linear and bounded right-inverse
    \[
        \mathcal{R}_{\Fint}\defpnt \TraceSpaceParallel \to \pcurlfuns,
    \]
    whose restriction to \(\sobfuns[]{1/2}{\Fint}^2\) is bounded in \(\psobfuns{1}^3\).
\end{corollary}
\begin{proof}
    By \cite[Thm~6.6]{Buf.C.2001b},
    the tangential trace \(\vvJ_{\pm} \mapsto \restrict{(\vvJ_{\pm}\times\nint)}{\Fint}\) is a bounded, linear and surjective operator from \(\curlfuns[]{\domainlr}\) onto \(\TraceSpaceParallel\).
    Thus, we denote with \(\mathcal{R_{\domainlr}} \defpnt \TraceSpaceParallel \to \curlfuns[]{\domainlr}\) some right inverse.
    The operator 
    \[
      \mathcal{R}_{\Fint} \defpnt \TraceSpaceParallel \to \pcurlfuns
    \]
    defined as \(\restrict[]{\mathcal{R}_\Fint \vvJ}{\domainlr} =  \pm \frac{1}{2}\mathcal{R}_{\domainlr}\vvJ\), satisfies the statement.
\end{proof}

\subsection*{Main result}\label{subsec:MainResult}

Our first main result states existence, uniqueness and stability under appropriate regularity assumptions in weak variational spaces.
\begin{theorem}\label{thm:UniqueExistence}
  If \(\vvu^0 \in \DompMOp\), \(\Jvol \in \confuns[\big]{0}{[0,T], \DomMOpH} + \confuns[\big]{1}{[0,T], \lebfuns{2}{\domain}^3},\) and
  \begin{equation*}
    \Jsurf[] \in 
    \confuns[\big]{2}{[0,T], \TraceSpaceParallel},
  \end{equation*}
  then there exists a unique solution 
  \begin{equation*}
    \vvu = \bigl(\vvH,\vvE\bigr) \in \confuns[\big]{0}{[0,T], \DompMOp}\cap\confuns[\big]{1}{[0,T],\lebfuns{2}{\domain}^6}
  \end{equation*}
  of \cref{eq:MaxwellFormulation}.
  Furthermore, for all \(t \in [0,T]\) it holds
  \begin{equation}\label{eq:StabilityBound}
    \begin{aligned}
      \normmueps{\vvu(t)} \lesssim& \normmueps{\vvu^0} 
        +  \norm{\Jsurf(0)}_{\lebfuns{2}{\Fint}^3}
        +  \norm{\Jsurf(t)}_{\lebfuns{2}{\Fint}^3}\\
        &+  \int_0^t \norm{\Jvol(s)}_{\lebfuns{2}{\domain}^3}\ \mathrm{d}s
        +  \int_0^t \norm{\dt\Jsurf(s)}_{\TraceSpaceParallel}\ \mathrm{d}s\\
        &+ 
          \int_0^t\norm[]{\Jsurf[](s)}_{\TraceSpaceParallel}\ \mathrm{d}s,
    \end{aligned}
  \end{equation}
  with a constant which is independent of \(\Jvol, \Jsurf\) and \(\vvu\).
\end{theorem}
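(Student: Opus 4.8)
The plan is to treat the inhomogeneous interface condition in \cref{eq:MaxwellFormulation} exactly as one treats inhomogeneous Dirichlet data: lift the surface current into the domain, subtract the lift to obtain a \emph{homogeneous} interface problem posed on the standard state space $\DomMOp$, and then invoke semigroup theory for the Maxwell operator $\MOp$. Concretely, I would first construct a bounded linear extension operator $\Rstep$ that maps the tangential data $(\JsurfTwo,\JsurfThree)$ to a field $\Rstep\Jsurf=(0,R_2,R_3)\in\pcurlfuns$ whose tangential jump reproduces the interface datum, i.e.\ $\jmp{\Rstep\Jsurf\times\nint}_\Fint=\Jsurf$ in accordance with \cref{eq:SurfaceCurrentMagneticFieldRelation}, and which is compatible with the perfectly conducting boundary conditions \cref{eq:PECBoundary}. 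I would demand the two quantitative bounds $\normmu{\Rstep\Jsurf}\lesssim\norm{\Jsurf}_{\lebfuns{2}{\Fint}^3}$ and $\norm{\curl(\Rstep\Jsurf)}_{\lebfuns{2}{\domain}^3}\lesssim\norm{(\JsurfTwo,\JsurfThree)}_{\LaplaceThree^{1/4}\times\LaplaceTwo^{1/4}}$, which are precisely the two regularity levels that surface in \cref{eq:StabilityBound}. The main obstacle is the behaviour of this extension at the edges where $\Fint$ meets $\partial\domain$; this is exactly what the mixed Dirichlet--Neumann spaces $\DomLaplaceTwo,\DomLaplaceThree$ from \cref{def:Laplacians} are designed to encode, so I would isolate the construction as a separate lemma relying on the techniques of \cite{Dor.Z.2023}.

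Next I would set $\widetilde{\vvH}=\vvH-\Rstep\Jsurf$ and $\widetilde{\vvu}=(\widetilde{\vvH},\vvE)$. Since the tangential jump of $\widetilde{\vvH}$ now vanishes, \cref{cor:VanishingJump} yields $\widetilde{\vvH}\in\curlfuns{\domain}$, hence $\widetilde{\vvu}\in\DomMOp$. Inserting $\vvu=\widetilde{\vvu}+(\Rstep\Jsurf,0)$ into \cref{eq:MaxwellFormulation} turns the problem into the standard inhomogeneous Cauchy problem $\dt\widetilde{\vvu}=\MOp\widetilde{\vvu}+\vvF$ on $\DomMOp$, with forcing $\vvF=\bigl(-\Rstep(\dt\Jsurf),\,\varepsilon^{-1}\curl(\Rstep\Jsurf)-\varepsilon^{-1}\Jvol\bigr)$ and initial value $\widetilde{\vvu}^0=\vvu^0-\bigl(\Rstep\Jsurf(0),0\bigr)\in\DomMOp$, where membership in $\DomMOp$ uses the compatibility $\jmp{\vvH^0\times\nint}_\Fint=\Jsurf(0)$ of the initial datum. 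I would then show that $\MOp$ is skew-adjoint with respect to $\sprodmueps{\cdot}{\cdot}$ --- verified by Green's formula together with the boundary and interface conditions built into $\DomMOp$ --- so that it generates a unitary \scsemigroup{} by Stone's theorem. Splitting $\vvF$ according to the hypotheses on $\Jvol$ and $\Jsurf$ into a part in $\confuns[\big]{0}{[0,T],\DomMOp}$ and a part in $\confuns[\big]{1}{[0,T],\lebfuns{2}{\domain}^6}$, the stated regularity assumptions are exactly what makes the Duhamel integral $\int_0^t e^{(t-s)\MOp}\vvF(s)\ds$ a classical solution $\widetilde{\vvu}\in\confuns[\big]{0}{[0,T],\DomMOp}\cap\confuns[\big]{1}{[0,T],\lebfuns{2}{\domain}^6}$; undoing the substitution gives the asserted solution, and uniqueness follows from uniqueness for the group.

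Finally, for \cref{eq:StabilityBound} I would argue by an energy estimate on the reduced variable: skew-adjointness gives $\tfrac{1}{2}\tfrac{d}{dt}\normmueps{\widetilde{\vvu}}^2=\sprodmueps{\vvF}{\widetilde{\vvu}}$, hence $\normmueps{\widetilde{\vvu}(t)}\le\normmueps{\widetilde{\vvu}^0}+\int_0^t\normmueps{\vvF(s)}\ds$. Estimating $\vvF$ with the two extension bounds produces the $\dt\Jsurf$, the $\Jvol$, and the $\LaplaceThree^{1/4}\times\LaplaceTwo^{1/4}$ terms inside the integral, while $\normmueps{\widetilde{\vvu}^0}\lesssim\normmueps{\vvu^0}+\norm{\Jsurf(0)}_{\lebfuns{2}{\Fint}^3}$ supplies the initial contribution. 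The triangle inequality $\normmueps{\vvu(t)}\le\normmueps{\widetilde{\vvu}(t)}+\normmu{\Rstep\Jsurf(t)}$ together with the first extension bound reinstates the lift and yields the endpoint term $\norm{\Jsurf(t)}_{\lebfuns{2}{\Fint}^3}$, giving \cref{eq:StabilityBound}. It is worth noting that $\Jsurf$ itself, as opposed to $\dt\Jsurf$, enters only at the two endpoints, precisely because it appears in the forcing solely through $\curl(\Rstep\Jsurf)$, which is controlled by the stronger $\LaplaceThree^{1/4}\times\LaplaceTwo^{1/4}$-norm rather than by the $\lebfuns{2}{\Fint}$-norm.
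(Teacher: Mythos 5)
Your proposal is correct and follows essentially the same route as the paper: the extension operator you postulate (with tangential jump reproduction, boundary compatibility, and exactly those two quantitative bounds plus the $\dt\Jsurf$-bound) is the paper's \cref{thm:SurfaceChargeExtension} built from \cref{lem:ScalarExtension}, your shifted field and the membership check $\vtu^0\in\DomMOp$ via \cref{cor:VanishingJump} coincide with \cref{eq:ShiftedMaxwellSystem} and \cref{lem:UniqueExistenceShifted} (where the paper likewise invokes standard Cauchy-problem theory after splitting the forcing), and your endpoint/integral bookkeeping in the stability bound matches the paper's proof. The only cosmetic differences are that you derive the a~priori bound by an energy identity and justify generation via skew-adjointness and Stone's theorem, whereas the paper takes norms directly in the variation-of-constants formula and cites the unitary group as known --- these are equivalent.
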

\begin{remark}
  Note that in the theory of \(C^0\)-semigroups, one can trade temporal and spatial regularity of the inhomogeneity \cite[Thm.~2.4]{Paz.1983.SLOA}.
  Thus, either \(\Jvol \in \confuns[\big]{1}{[0,T], L^2(Q)^3}\) or \(\Jvol \in \confuns[\big]{0}{[0,T], \DomMOpH}\) gives rise to strong solutions. Compare Corollary~2.5 and Corollary~2.6 in \cite{Paz.1983.SLOA}, respectively.
  By the linear nature of our problem, we write
  \(
    \Jvol \in \confuns[\big]{0}{[0,T], \DomMOpH} + \confuns[\big]{1}{[0,T], \lebfuns{2}{\domain}^3}
  \).
\end{remark}

\subsection*{Strategy of proof}

It is well known that the operator \(\MOp \defpnt \DomMOp\to\lebfuns{2}{\domain}^6\) is the generator of a unitary \scsemigroup{}, whereas the operator \(\pMOp\defpnt \DompMOp \to \lebfuns{2}{\domain}^6\) does not inherit any good properties, see, \eg{}, \cite[Rem.~2.1]{Dor.Z.2023}.
  Thus, we define a lifting \(\JextH[] = \mathcal{R}_{\Fint}\Jsurf[]\)
  and introduce a shifted magnetic field \(\vtH = \vvH - \JextH\) with a vanishing tangential jump \(\jmp{\vtH\times\nint}_\Fint = 0\).
The shifted system then reads: seek \(\bigl(\vtH(t),\vvE(t)\bigr) \in \DomMOpH  \times \DomMOpH\) such that
\begin{subequations}\label{eq:ShiftedMaxwellSystem}
  \begin{align}
    \dt \vtH &= -\MOpE \vvE - \vtJ_{1}    &&\text{in }[0,T]\times \domain,\\
    \dt \vvE &= \MOpH \vtH -\vtJ_{2}      &&\text{in }[0,T]\times \domain,\\
    \vtH(0) &= \vtH^0,\quad \vvE(0) = \vvE^0               &&\text{in }\domain,
  \end{align}
\end{subequations}
with \(\vtJ_1 = -\dt\JextH\), \(\vtJ_2 = -\varepsilon^{-1}\Jvol + \varepsilon^{-1}\curl\JextH\) and \(\vtH^0 = \vvH^0 - \JextH(0)\).
We then use \scsemigroup{} theory to obtain unique existence and stability for the shifted problem, and, therefore also for \cref{eq:MaxwellFormulation}.

\subsection*{Proof of main result}\label{subsec:ProofMainResult}
We write \cref{eq:ShiftedMaxwellSystem} in the compact form:
seek \(\vtu(t) \in \DomMOp\) such that
\begin{subequations}\label{eq:ShiftedMaxwellSystemCombined}
  \begin{align}
    \dt \vtu(t) &= \MOp\vtu(t) + \vtj(t), \text{ for } t\in [0,T],\\
    \vtu(0) &= \vtu^0,
  \end{align}
\end{subequations}
with \(\vtu(t) = \bigl(\vtH(t),\vvE(t)\bigr) \), \(\vtj(t) = \bigl(-\dt\JextH(t), -\varepsilon^{-1}\Jvol(t) + \varepsilon^{-1}\curl\JextH(t)\bigr)\), \(\jextH(t) = \bigl(\JextH(t), 0\bigr)\) and \(\vtu^0 = \vvu^0 - \jextH(0)\).
This problem fits into the framework of Cauchy problems and standard theorems for existence and stability can be applied.

\begin{lemma}\label{lem:UniqueExistenceShifted}
  If \(\vvu^0 \in \DompMOp\), \(\Jvol \in \confuns[\big]{0}{[0,T], \DomMOpH} + \confuns[\big]{1}{[0,T], \lebfuns{2}{\domain}^3},\) and
\begin{equation*}
    \Jsurf[] \in 
    \confuns[\big]{2}{[0,T], \TraceSpaceParallel},
  \end{equation*}
  then there exists a unique solution
  \begin{equation*}
    \vtu = \bigl(\vtH, \vvE\bigr) 
    \in \confuns[\big]{0}{[0,T], \DomMOp} \cap \confuns[\big]{1}{[0,T], \lebfuns{2}{\domain}^6} 
  \end{equation*}
  of \cref{eq:ShiftedMaxwellSystem} given by
  \begin{equation}\label{eq:VoCShifted}
    \vtu(t) = e^{t\MOp}\vtu^0 + \int_0^t e^{(t-s)\MOp}\vtj(s)\ \mathrm{d}s.
  \end{equation}
\end{lemma}
\begin{proof}
  In view of standard results for Cauchy problems, \cf{} \cite[Thm.~4.2.4, Cor.~4.2.5 ]{Paz.1983.SLOA}, we need to check the two conditions
  \begin{equation*}
    \vtu^0\in\DomMOp, \quad \vtj \in \confuns[\big]{0}{[0,T]; \DomMOp} + \confuns[\big]{1}{[0,T]; \lebfuns{2}{\domain}^6}.
  \end{equation*}

  By construction \(\vtu^0 \in \DompMOp\).
  Furthermore, by \cref{cor:NewExtensionCoro} it holds
  \begin{equation*}
    \jmp{\vtH^0\times \nint}_\Fint 
      = \jmp{\vvH^0\times\nint}_\Fint - \jmp{\JextH(0)\times\nint}_\Fint 
      = 0.
    \end{equation*}
  Therefore, we conclude with \cref{cor:VanishingJump} that \(\vtu^0\in\DomMOp\).
  Again by \cref{cor:NewExtensionCoro}, we see that
  \begin{equation*}
    \varepsilon^{-1}\curl\JextH, \dt \JextH \in \confuns[\big]{1}{[0,T]; \lebfuns{2}{\domain}^3}.
  \end{equation*}
  This proves the claim together with the assumption on \(\Jvol\).
\end{proof}

We are now able to prove the first main result.
\begin{proof}[{Proof of \cref{thm:UniqueExistence}}]\label{prf:UniqueExistence}
  A straightforward calculation shows that \(\vvu = \vtu + \jextH\) solves \cref{eq:MaxwellFormulation}.
  This solution is unique as a consequence of the uniqueness in \cref{lem:UniqueExistenceShifted}.
  
  It remains to prove stability.
  Taking norms in \cref{eq:VoCShifted}, we obtain
  \begin{align*}
    \normmueps{\vtu(t)} 
      \leq&\ \normmueps{\vvu^0} + \normmu{\JextH(0)}\\
      &+\int_0^t 
        \normeps[\big]{\varepsilon^{-1}\Jvol(s)}
        + \normeps[\big]{\varepsilon^{-1} \curl\JextH(s)}
        + \normmu[\big]{\dt \JextH(s)}\ \mathrm{d}s\\
      \leq&\ \normmueps{\vvu^0} + \sqrt{\Msusmax}\norm{\JextH(0)}_{\lebfuns{2}{\domain}^3}\\
      &+\frac{1}{\sqrt{\delta}}\int_0^t \norm{\Jvol(s)}_{\lebfuns{2}{\domain}^3}\ \mathrm{d}s
      +\sqrt{\Msusmax}\int_0^t \norm{\dt \JextH(s)}_{\lebfuns{2}{\domain}^3}\ \mathrm{d}s\\
      &+\frac{1}{\sqrt{\delta}}\int_0^t \norm[\big]{\curl\JextH(s)}_{\lebfuns{2}{\domain}^3}\ \mathrm{d}s.
  \end{align*}
  By \cref{cor:NewExtensionCoro}, we estimate further
  \begin{align*}
    \normmueps{\vtu(t)}
      \lesssim&\ \normmueps{\vvu^0} + \norm{\Jsurf(0)}_{\lebfuns{2}{\Fint}^3}\\
      &+ \int_0^t \norm{\Jvol(s)}_{\lebfuns{2}{\domain}^3}\ \mathrm{d}s
      + \int_0^t \norm{\dt\Jsurf(s)}_{\TraceSpaceParallel}\ \mathrm{d}s\\
      &+ \int_0^t \norm[]{\Jsurf[](s)}_{\TraceSpaceParallel}\ \mathrm{d}s.
  \end{align*}
  The claim follows with \(\normmueps{\vvu(t)} \leq \normmueps{\vtu(t)} + \normmueps{\jextH(t)}\).
\end{proof}

\section{Spatial discretization}\label{sec:SpatialDiscretization}

In this chapter, we introduce a concrete space discretization and rigorously derive the discrete curl in \cref{eq:DiscreteCurlIdea} and the discrete extension in \cref{eq:DiscreteLiftIdea}.
We first present our two main results involving rigorous error bounds for the spatially discrete scheme and an extended scheme.
The chapter then proceeds with a spatially discrete analogue of the stability bound \cref{thm:UniqueExistence} and is concluded with the proofs of the main results.

\subsection*{Discrete setting}\label{subsec:DiscreteSetting}
We denote with \(\meshh\) matching simplicial meshes of the domain \(\domain\), generated by a reference element \(\widehat{\element}\).
The subscript \(h\) indicates the \meshsize{}, defined as \(h = \max_{\element\in\meshh} h_\element\), where \(h_\element\) denotes the diameter of a mesh element \(\element\).
Furthermore, we assume that the mesh sequence is shape regular in the sense of \cite[Def.~11.2]{Ern.G.2021.FEAI}.
Thus, there exists \(\sigma > 0\) independent of \(h\) such that
\(h_\element \leq \sigma\rho_\element\), where \(\rho_\element\) denotes the diameter of the largest inscribing ball of \(\element\).

We collect the faces \(\face\) of all mesh elements in the set \(\Ffaces = \Finterior \cup \Fboundary\), where \(\Finterior\) denotes the set of all faces in the interior of \(\domain\) and \(\Fboundary\) the set of all faces on the boundary \(\partial\domain\). 
Refer to \cite[Def.~8.10]{Ern.G.2021.FEAI} for a precise definition of mesh faces.

The outer unit normal vector of \(\element\) is denoted by \(\nvK\).
Every interior face \(\face\in\Finterior\) intersects two elements \(\elementKFl\) and \(\elementKFr\).
The order of the elements is arbitrary, but fixed.
We choose the unit normal \(\nvF\) to \(\face\) pointing from \(\elementKFl\) to \(\elementKFr\).
For boundary faces \(\face \in \Fboundary\), we choose the unit normal \(\nvF\) to \(\face\) as the
outer unit normal vector \(\nvK\) of the associated element \(\element\).  

Let \(\face\) be an interior face and \(v\defpnt \domain \to \bbR\) be a function that admits a \welldefined{} trace on \(\face\).
The weighted average of \(v\) on the face \(\face\) is defined as
\begin{subequations}
  
  \begin{equation}\label{eq:WeightedAverage}
    \avg{v}^\omega_\face = 
    \frac{
      \omega_{\elementKFl}\restrict[\big]{(\restrict{v}{\elementKFl})}{\face}
      + \omega_{\elementKFr} \restrict[\big]{(\restrict{v}{\elementKFr})}{\face}}
      {\omega_{\elementKFl} + \omega_{\elementKFr}},
    \end{equation}
    where \(\omega \defpnt \domain \to (0,\infty)\) denotes a positive weight function that is \piecewise{} constant, \ie{}, \(\restrict{\omega}{K} \equiv \omega_K\) for all \(\element\in\meshh\).
    Analogously, we define the jump of \(v\) on \(\face\) as
    \begin{equation}
      \jmp{v}_\face = \restrict[\big]{(\restrict{v}{\elementKFr})}{\face} - \restrict[\big]{(\restrict{v}{\elementKFl})}{\face}.
    \end{equation}
\end{subequations}
For vector fields, both definitions hold \componentwise{}.

The following assumption is necessary to resolve the interface conditions \cref{eq:InterfaceCond}.
\begin{assumption} \label{ass:interface_aligned}
  We assume that every element \(K\in\meshh\) lies completely on one side of the interface \(\Fint\), \ie{},
  \begin{equation*}
    \element \cap \Fint = \emptyset, \quad\text{for all } \element\in\meshh.
  \end{equation*}
  Furthermore, we assume that the unit normal \(\nvF\) for every face \(\face\in\Finterior\) with \(F\subset\Fint\) points in the same direction as \(\nint\), \ie{},
  \begin{equation*}
    \nvF \cdot \nint = 1, \quad\text{for all } \face\in\Finterior \text{ with }\face\subset\Fint.
  \end{equation*}
  The set of all faces \(\face\in\Finterior\) with \(\face\subset \Fint\) is denoted by \(\Finterface\).
\end{assumption}

Similar to the definition of the broken polynomial space \cref{eq:BrokenPolySpace}, we introduce for \(s\geq 0\) the broken Sobolev space on \(\meshh\) defined by
\begin{subequations}
\begin{equation}\label{eq:BrokenSobolev}
  \sobfuns{s}{\meshh} = \setc{v\in\lebfuns{2}{\domain}}{\restrict{u}{\element}\in\sobfuns{s}{\element} \text{ for all } \element \in \meshh} .
\end{equation}
The \piecewise{} \seminorm{} on \(\sobfuns{s}{\meshh}\) is denoted by \(\singlenorm{\cdot}_{\sobfuns{s}{\meshh}}\) and we define 
\begin{equation}
  \norm{\cdot}_{\sobfuns[]{s}{\meshh}}^2 = 
    \norm{\cdot}_{\lebfuns[]{2}{\domain}}^2 
    + \singlenorm{\cdot}_{\sobfuns{s}{\meshh}}^2.
\end{equation}
\end{subequations}

In the following, more regularity of the solution is assumed such that it admits classical traces on element faces.
Therefore, we define the spaces
\begin{subequations}\label{eq:MoreRegularSolutionSpaces}
  \begin{equation}
    \pVastH = \DompMOpH \cap \sobfuns{1}{\meshh}^3,
    \quad\VastE = \DomMOpE \cap \sobfuns{1}{\meshh}^3,
    \quad\pVast = \pVastH \times \VastE
  \end{equation}
  and the restricted spaces  
  \begin{equation}
    \VastH = \DomMOpH \cap \sobfuns{1}{\meshh}^3,
    \quad \Vast = \VastH \times \VastE.
  \end{equation}
\end{subequations}
Since functions of the approximation space \(\Vapprox\), defined in \cref{eq:BrokenPolySpace}, do not admit a \welldefined{} \(\curl\), we introduce the following spaces containing both the analytical solution and the approximation
\begin{subequations}\label{eq:SpacesContainingBothAnaAndApprox}
  \begin{equation}
    \pVasthH = \pVastH + \Vapprox,
    \quad \VasthE = \VastE + \Vapprox,
    \pVasth = \pVasthH \times \VasthE,
  \end{equation}
  and similarly
  \begin{equation}
    \VasthH = \VastH + \Vapprox,
    \quad \Vasth = \VasthH \times \VasthE.
  \end{equation}
\end{subequations}

\begin{remark}
  Note that the results are not specific to matching \simplicial{} meshes, but are also valid for quadrilateral meshes and general meshes as described in \cite[Sec.~1.2]{Di.E.2012.MADG}.
  We omit the details for the sake of presentation.
\end{remark}

\subsection*{Spatial discretization}\label{subsec:SpatialDiscretization}

\begin{subequations}\label{eq:DiscreteMaxwellOperators}
  As motivated in the introduction with \cref{eq:DiscreteLiftIdea}, we define the discrete lift operator 
  \begin{equation}\label{eq:LiftOp}
    \LiftOp\defpnt \lebfuns{2}{\Fint}^3 \to \Vapprox,\quad
    \sprodeps{\LiftOp \vvV}{\TeFunH} = -\sum_{\face\in\Finterface} \sprod{\restrict{\vvV}{\face}}{\avg{\TeFunH}^{\overline{\mu c}}_\face}
  \end{equation}
  for \(\TeFunH\in\Vapprox\), and the discrete magnetic Maxwell operator \(\pDgMOpH\defpnt \pVasthH \to \Vapprox\)
  \begin{equation}\label{eq:DGpMaxwellOperatorH}
    \sprodeps{\pDgMOpH \vvH}{\TeFunH} =
    \begin{multlined}[t]
      \sum_{\element\in\meshh} \sprod{\vvH}{\curl \TeFunH}_\element\\
      -\sum_{\face\in\Fboundary} \sprod{\vvH\times\nvF}{\TeFunH}_\face 
      -\sum_{\face\in\Finterior} \sprod{\avg{\vvH}^{\mu c}_\face}{\jmp{\TeFunH}_\face\times \nvF}_\face.
    \end{multlined}
  \end{equation}
  Analogously, we define the electric Maxwell operator \(\DgMOpE\defpnt\VasthE \to \Vapprox\) for \(\vvE\in\VasthE\) and \(\TeFunE\in\Vapprox\) by
  \begin{equation}\label{eq:DGMaxwellOperatorE}
    \sprodmu{\DgMOpE\vvE}{\TeFunE}
    = \sum_{\element\in\meshh} \sprod{\vvE}{\curl \TeFunE}_\element 
    - \sum_{\face\in\Finterior} \sprod{\avg{\vvE}^{\varepsilon c}_\face}{\jmp{\TeFunE}_\face\times \nvF}_\face.
  \end{equation}
  This definition incorporates the perfectly conducting boundary condition for the electric field.
  The discrete operator acting on the combined field is defined as
  \begin{equation}\label{eq:CombinedDiscreteMaxwellOperatos}
    \DgMOp\defpnt \pVasth \to \Vapprox^2,\quad \DgMOp = 
    \begin{pmatrix}
      0 & - \DgMOpE\\
      \pDgMOpH & 0
    \end{pmatrix}.
  \end{equation}
  Analogously to \cref{eq:NonPiecewiseOperators}, we define the restrictions
  \begin{align}\label{eq:DiscreteNonPiecewiseOperators}
    \DgMOpH \defpnt \VasthH &\to \Vapprox, 
    & \DgMOpH &= \restrict{\pDgMOpH}{\VasthH},\\
    \DgMOp \defpnt \Vasth &\to \Vapprox^2,
    & \DgMOp &= \restrict{\pDgMOp}{\Vasth}.
  \end{align}
\end{subequations}

The \semidiscrete{} problem now reads: seek \(\bigl(\vvH_h(t),\vvE_h(t)\bigr)\in \Vapprox^2\) such that
\begin{subequations}\label{eq:SemiDiscreteMaxwell}
  \begin{align}
    \dt \vvH_h(t) &= -\DgMOpE \vvE_h(t)     &&\text{for } t\in [0,T],\\
    \dt \vvE_h(t) &= \pDgMOpH \vvH_h(t) -\Jvolh(t) - \Jsurfh(t)     &&\text{for } t\in [0,T],\\
    \vvH_h(0) &= \vvH_h^0,\quad \vvE_h(0) = \vvE_h^0,
  \end{align}
\end{subequations}
where \(\Jsurfh = \LiftOp \Jsurf\), \(\vvH_h^0 = \OProj\vvH^0\), \(\vvE_h^0 = \OProj\vvE^0\) and \(\Jvolh = \OProj\varepsilon^{-1}\Jvol\).
We denote with \(\OProj\defpnt\lebfuns{2}{\domain}\to\PolyAtMost{3}{k}[\meshh]\) the broken \(L^2\)-orthogonal projection defined by
\begin{equation}\label{eq:BrokenOrthProjection}
  \sprod{v - \OProj v}{\phi_h}_{\lebfuns{2}{\domain}} = 0\qquad \text{for all } \phi_h \in \PolyAtMost{k}{3}[\meshh].
\end{equation}
For typical properties of this projection, compare \cite[Sec.~18.4]{Ern.G.2021.FEAI} or \cite[Sec.~1.4.4]{Di.E.2012.MADG}.

The second main result gives an error bound on the spatially discrete solution \(\vvu_h=\bigl(\vvH_h,\vvE_h\bigr)\) of \cref{eq:SemiDiscreteMaxwell}.
For a sufficiently regular problem, we obtain convergence in the mesh parameter \(h\).
The proof is given below.
\begin{theorem}\label{thm:SpatialConvergence}
  Let the solution \(\vvu=\bigl(\vvH,\vvE\bigr)\) of \cref{eq:MaxwellFormulation} satisfy
  \begin{equation}\label{eq:SpatialRegularity}
    \vvu \in \confuns{0}{[0,T], \pVast \cap \sobfuns{1+s}{\meshh}^6} \cap \confuns{1}{[0,T], \lebfuns{2}{\domain}^6},
  \end{equation}
  with \(s\geq 0\).
  Furthermore, let \Cref{ass:interface_aligned} hold.
  Then, the appropriation \(\ufh[]=\bigl(\Hfh[],\Efh[]\bigr)\) defined in \cref{eq:SemiDiscreteMaxwell} satisfies
  \begin{equation*}
    \normmueps{\vvu(t) - \vvu_h(t)} \leq C h^{\regcoeff},
    \qquad 0\leq t \leq T,
  \end{equation*}
  with a constant \(C>0\) independent of \(h\).
  Here, \(\regcoeff=\min\monosetc{s,k}\) with \(k\) denoting the polynomial degree of the approximation space defined in \cref{eq:BrokenPolySpace}.
\end{theorem}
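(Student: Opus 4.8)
The plan is to compare \(\vvu\) with \(\vvu_h\) through the broken \(L^2\)-projection \(\OProj\) from \cref{eq:BrokenOrthProjection} and to reduce the analysis to a discrete error equation driven by a consistency defect. First I would split
\[
  \vvu - \vvu_h = (\vvu - \OProj\vvu) + \vve_h, \qquad \vve_h := \OProj\vvu - \vvu_h \in \Vapprox^2,
\]
with \(\OProj\vvu = (\OProj\vvH,\OProj\vvE)\). The first term is the projection error, bounded directly by the standard approximation properties of \(\OProj\): under the regularity \cref{eq:SpatialRegularity} one has \(\normmueps{\vvu(t)-\OProj\vvu(t)} \lesssim h^{\regcoeff}\) uniformly on \([0,T]\), using \(\regcoeff=\min\{s,k\}\) and the norm equivalence noted after \cref{eq:weightedL2}. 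It remains to bound \(\vve_h\).

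Next I would set up the evolution of \(\vve_h\). Inserting \(\OProj\vvu\) into the \semidiscrete{} scheme \cref{eq:SemiDiscreteMaxwell} defines the defect \(\vvd_h=(\vvd_h^{\vvH},\vvd_h^{\vvE})\in\Vapprox^2\) via
\[
  \dt\OProj\vvH = -\DgMOpE\OProj\vvE + \vvd_h^{\vvH}, \qquad
  \dt\OProj\vvE = \pDgMOpH\OProj\vvH - \Jvolh - \Jsurfh + \vvd_h^{\vvE}.
\]
Since the initial data are projected, \(\vve_h(0)=\OProj\vvu^0-\vvu_h^0=0\), and subtracting \cref{eq:SemiDiscreteMaxwell} yields the clean error equation \(\dt\vve_h = \DgMOp\vve_h + \vvd_h\) with vanishing initial value. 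The crux is to identify \(\vvd_h\). Using \(\dt\OProj=\OProj\dt\), the continuous equations \cref{eq:MaxwellFormulation}, the relation \(\Jvolh=\OProj(\varepsilon^{-1}\Jvol)\), and the elementwise-constant character of \(\mu,\varepsilon\) (so that \(\sprodeps{\OProj f}{\vvpsi_h}=\sprodeps{f}{\vvpsi_h}\) for \(\vvpsi_h\in\Vapprox\)), I would prove the consistency identities
\[
  \sprodmu{\DgMOpE\vvE}{\vvphi_h} = \sum_{\element\in\meshh}\sprod{\curl\vvE}{\vvphi_h}_\element, \qquad
  \sprodeps{\pDgMOpH\vvH}{\vvpsi_h} = \sum_{\element\in\meshh}\sprod{\curl\vvH}{\vvpsi_h}_\element + \sprodeps{\Jsurfh}{\vvpsi_h}
\]
for the exact solution and all \(\vvphi_h,\vvpsi_h\in\Vapprox\). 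These follow from elementwise Green's formula together with the weighted magic formula for the face terms: on interior faces away from \(\Fint\) the tangential traces of \(\vvE\) and \(\vvH\) are single valued, so the weighted averages reproduce the flux terms exactly; the boundary face terms of \(\DgMOpE\) vanish because \(\vvE\in\hcurlfuns{\domain}\); and on \(\Finterface\) the tangential jump \(\jmp{\vvH\times\nvF}_\face=\jmp{\vvH\times\nint}_\Fint=\Jsurf\) — with \(\nvF=\nint\) by \cref{ass:interface_aligned} — produces precisely the lift \(\Jsurfh\) from \cref{eq:LiftOp}, the weight \(\overline{\mu c}\) in \(\LiftOp\) being complementary to the weight \(\mu c\) in \(\pDgMOpH\). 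Granting this, the defect collapses to the discrete curl operators evaluated at the projection error,
\[
  \sprodmu{\vvd_h^{\vvH}}{\vvphi_h} = \sprodmu{\DgMOpE(\OProj\vvE-\vvE)}{\vvphi_h}, \qquad
  \sprodeps{\vvd_h^{\vvE}}{\vvpsi_h} = \sprodeps{\pDgMOpH(\vvH-\OProj\vvH)}{\vvpsi_h}.
\]

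I would then estimate these. In each identity the volume contribution \(\sum_{\element}\sprod{\OProj\vvu-\vvu}{\curl\vvphi_h}_\element\) vanishes, because \(\restrict{\curl\vvphi_h}{\element}\) is a polynomial of degree at most \(k-1\) and hence \(L^2(\element)\)-orthogonal to the projection error; only face terms survive, pairing \((\OProj\vvu-\vvu)\) on faces against \(\jmp{\vvphi_h}\times\nvF\) (and, for the magnetic part, the boundary term \((\OProj\vvH-\vvH)\times\nvF\)). A Cauchy--Schwarz estimate on faces, the projection bound \(\norm{\OProj v-v}_{\lebfuns{2}{\face}}\lesssim h_\element^{1/2+\regcoeff}\singlenorm{v}_{\sobfuns{1+s}{\element}}\), and a discrete inverse/trace inequality \(\norm{\jmp{\vvphi_h}\times\nvF}_{\lebfuns{2}{\face}}\lesssim h_\element^{-1/2}\norm{\vvphi_h}_{\lebfuns{2}{\element}}\) make the half powers of \(h\) cancel and give \(\normmueps{\vvd_h(t)}\lesssim h^{\regcoeff}\) uniformly in \(t\). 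Finally, since the central-flux operator \(\DgMOp\) is skew-symmetric with respect to \(\sprodmueps{\cdot}{\cdot}\) on \(\Vapprox^2\) (energy conservation, \cf{} \cite{Hoc.S.2016}), testing \(\dt\vve_h=\DgMOp\vve_h+\vvd_h\) with \(\vve_h\) gives \(\tfrac12\tfrac{\mathrm d}{\mathrm dt}\normmueps{\vve_h}^2=\sprodmueps{\vvd_h}{\vve_h}\), whence \(\tfrac{\mathrm d}{\mathrm dt}\normmueps{\vve_h}\le\normmueps{\vvd_h}\); integrating with \(\vve_h(0)=0\) yields \(\normmueps{\vve_h(t)}\le\int_0^t\normmueps{\vvd_h(s)}\,\mathrm ds\lesssim h^{\regcoeff}\), and the triangle inequality with the projection-error bound closes the proof.

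The step I expect to be most delicate is the interface consistency. One must verify that the weighted averages produced by the elementwise integration by parts match exactly those built into \(\pDgMOpH\) and into the lift \(\LiftOp\), so that the surface current reproduces the jump \(\jmp{\vvH\times\nint}_\Fint=\Jsurf\) \emph{exactly} and not merely up to a consistency error; the orientation fixed by \cref{ass:interface_aligned} and the role of the complementary weight \(\overline{\mu c}\) are essential here. A further point requiring care is that, in contrast to \(\vvE\), the exact magnetic field has a free (generally nonzero) tangential trace on \(\partial\domain\), so the boundary face term in \(\pDgMOpH\) does not vanish for the exact solution; it must be retained as a projection-error contribution and estimated together with the interior face terms, which is exactly where the regularity \(\vvu\in\sobfuns{1+s}{\meshh}^6\) and the geometric alignment of \cref{ass:interface_aligned} (handling the interface meeting the boundary) enter.
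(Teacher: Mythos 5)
Your proposal is correct and follows essentially the same route as the paper: the identical splitting into projection error and dG-error, the same consistency identity in which the lift \(\LiftOp\) exactly absorbs the interface jump (the paper's \cref{lem:Consistency}, which you re-derive inline), and the same defect \(\pDgMOp\) applied to the projection error, bounded via face projection estimates and a discrete trace inequality (the paper's \cref{lem:ProjectionUnderDGCurl}, cited there from the literature). The only cosmetic deviation is that you close the argument by testing the error equation with \(\vve_h\) and using skew-adjointness (\cref{lem:SkewAdjointMOp}) directly, where the paper equivalently invokes the variation-of-constants formula for the unitary \scsemigroup{} generated by \(\DisMOp\).
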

Note that this agrees with the results obtained for the special case \(\Jsurf = 0\), see, \eg{}, \cite{Hes.W.2002}.

\subsection*{Nodal interpolation}\label{subsec:NodalInterpolation}
The calculation of the lift operator \cref{eq:LiftOp} involves the evaluation of integrals over mesh faces.
In practice, those integrals are approximated by quadrature formulas.
This can be quite expensive, since the evaluation may be required at every \timestep{}.
Moreover, in the future, we want to consider currents that depend on the electric field, \ie{}, \(\Jsurf \approx \Jsurf(\Efh[])\).
In such a situation, the evaluation of the lift would cause evaluations of the finite element function at every quadrature point which is quite expensive and should be avoided.
In such cases, nodal discontinuous Galerkin methods are attractive since they allow for a fast evaluation of integrals and functions, see, \eg{}, \cite[App.~2]{Di.E.2012.MADG} for a detailed discussion.
In the following, we construct a scheme that makes use of nodal interpolation of \(\Jsurf\) and provide error bounds.

We specify the construction from \cref{subsec:DiscreteSetting} and choose \(\calN_k = \dim{\PolyAtMost{3}{k}}\) nodes \(\Sigma_{\widehat{\element}} = \monosetc{\sigma_{\widehat{\element},1},\ldots,\sigma_{\widehat{\element},\calN_k}}\) in the closure of the reference element \(\widehat{\element}\).
Then, the Lagrange polynomials, defined by \(\theta_{\element,i}(\sigma_{\element,j}) = \delta_{ij}\) for \(i,j\in\monosetc{1,\ldots,\calN_k}\), form a basis of \(\PolyAtMost{k}{3}[K]\).
Thus, we can define for \(\ell>3/2\) the local interpolation operator %
\begin{equation*}
  \NInterp_\element \defpnt \sobfuns{\ell}{\element} \to \PolyAtMost{k}{3}[\element],
  \quad \NInterp_\element v = \sum_{j=1}^{\calN_k} \restrict{v}{K}(\sigma_{K,i}) \theta_{K,i}
\end{equation*}
and hence, the global interpolation operator by restriction, \ie{},
\begin{equation*}
  \NInterp\defpnt\sobfuns{\ell}{\meshh} \to \PolyAtMost{k}{3}[\meshh],
  \quad \restrict{\NInterp v}{\element} = \NInterp_\element v, \quad \text{for } \element\in\meshh.
\end{equation*}
Note that the interpolation operator acts \componentwise{} for vector fields.

The surface current \(\Jsurf\) is only supported on the interface \(\Fint\) and hence we construct an interpolation operator on the \submesh{} \(\Finterface\).
Therefore, we need the following two assumptions.
\begin{assumption}[{\cite[Ass.~20.1]{Ern.G.2021.FEAI}}]\label{ass:FaceUnisolvence}
  Let \(\widehat{\face}\) be a face of the reference element \(\widehat{\element}\) and denote with \(\Sigma_{\widehat{\face}}\) the nodes that are located on \(\widehat{\face}\), \ie{}, \(\Sigma_{\widehat{\face}} = \Sigma_{\widehat{\element}} \cap \widehat{\face}\). 
  We assume that for any \(p\in\PolyAtMost{3}{k}[\widehat{\element}]\) it holds \(\restrict{p}{\widehat{\face}} \equiv 0\) if and only if
  \(p(\sigma) = 0\) for all \(\sigma \in \Sigma_{\widehat{\face}}\).
\end{assumption}
We also need to make sure how the nodes of neighboring elements come in contact with each other.
\begin{assumption}[{\cite[Ass.~20.3]{Ern.G.2021.FEAI}}]\label{ass:FaceMatching}
  For any face \(F \in \Finterior\) it holds
  \begin{equation*}
    \Sigma_{\elementKFl}\cap\face = \Sigma_{\elementKFr}\cap\face \eqqcolon \Sigma_F.
  \end{equation*}
  We write again \(\Sigma_F = \monosetc{\sigma_{F,1},\ldots,\sigma_{F,\frakN_k}}\).
\end{assumption}

\begin{remark}
  These assumptions ensure that the triple \((\face, \PolyAtMost{2}{k}[\face], \Sigma_{\face})_{\face\in\Finterface}\) is again a finite element for \(\Fint\) in the sense of \cite[Def.~5.2]{Ern.G.2021.FEAI}, see \cite[Lem.~20.2]{Ern.G.2021.FEAI} for details.
  Note that the usual \(\mathbb{P}_k\) and \(\mathbb{Q}_k\) nodal Lagrange elements satisfy both assumptions, see \cite[Sec.~20.2]{Ern.G.2021.FEAI} for details.
\end{remark}

Given \Cref{ass:FaceUnisolvence,ass:FaceMatching}, we are able to define for \(\kappa > 1\) the local interpolation operator%
\begin{equation*}
  \FInterp_\face \defpnt \sobfuns{\kappa}{\face} \to \PolyAtMost{2}{k}[\face],
  \quad\FInterp_\face v = \sum_{j = 1}^{\frakN_k} \restrict{v}{\face}(\sigma_{\face,j}) \theta_{\face,j}
\end{equation*}
and the global interpolation operator
\begin{equation*}
  \FInterp \defpnt \sobfuns{\kappa}{\Finterface} \to \PolyAtMost{2}{k}[\Finterface],\quad
  \restrict{\FInterp v}{\face} = \FInterp_\face v, \quad \text{for } \face\in\Finterface.
\end{equation*}

The problem now reads: seek \(\bigl(\vcH(t), \vcE(t)\bigr) \in \Vapprox^2\) such that
\begin{subequations}\label{eq:InterpolateSemiDiscreteMaxwell}
  \begin{align}
    \dt \vcH_h(t) &= -\DgMOpE \vcE_h(t)     &&\text{for } t\in [0,T],\\
    \dt \vcE_h(t) &= \pDgMOpH \vcH_h(t) -\Jvolh(t) - \JsurfhInterp(t)    &&\text{for } t\in [0,T],\\
    \vcH_h(0) &= \vvH_h^0,\quad \vcE_h(0) = \vvE_h^0,
  \end{align}
\end{subequations}
with \(\JsurfhInterp[] = \LiftOp\FInterp\Jsurf\).
Note that the \semidiscrete{} solutions of \cref{eq:SemiDiscreteMaxwell} and \cref{eq:InterpolateSemiDiscreteMaxwell} only differ in the fact that we use nodal interpolation under the lift operator.
Our third main result concerns the error introduced by this additional approximation.
The proof is given below.
\begin{theorem}\label{thm:InterpolationComparison}
	Let \Cref{ass:interface_aligned,ass:FaceUnisolvence,ass:FaceMatching} hold and further
	let the solution \(\vvu=\bigl(\vvH,\vvE\bigr)\) of \cref{eq:MaxwellFormulation} satisfy
  \begin{equation}\label{eq:SpatialRegularityInterpolation}
    \vvu \in \confuns{0}{[0,T], \pVast \cap \sobfuns{1+s}{\meshh}^6} \cap \confuns{1}{[0,T], \lebfuns{2}{\domain}^6},
  \end{equation}
  with \(s > 1/2\).
  For the approximations \(\ufh[]\) defined in \cref{eq:SemiDiscreteMaxwell} and \(\vcu_h\) defined in \cref{eq:InterpolateSemiDiscreteMaxwell} it holds 
  \begin{equation*}
    \normmueps{\vvu_h(t) - \vcu_h(t)} \leq C h^{\min\monosetc{s,k+1/2}}
  \end{equation*}
  with a constant \(C>0\) which is independent of \(h\).
  Here, \(k\) denotes the polynomial degree of the approximation space \cref{eq:BrokenPolySpace}.
\end{theorem}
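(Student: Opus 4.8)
The plan is to exploit that the two schemes \cref{eq:SemiDiscreteMaxwell} and \cref{eq:InterpolateSemiDiscreteMaxwell} are driven by \emph{identical} data except for the surface term, and share the same initial values. Setting $\vve_h = \vvu_h - \vcu_h = (\vvH_h - \vcH_h,\,\vvE_h - \vcE_h)$, subtraction of the two systems gives a discrete Maxwell problem with vanishing initial data and the only source in the electric equation,
\begin{equation*}
  \dt(\vvH_h - \vcH_h) = -\DgMOpE(\vvE_h - \vcE_h),\qquad
  \dt(\vvE_h - \vcE_h) = \pDgMOpH(\vvH_h - \vcH_h) - \bigl(\Jsurfh - \JsurfhInterp\bigr),
\end{equation*}
where $\Jsurfh - \JsurfhInterp = \LiftOp(\Ident - \FInterp)\Jsurf$. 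I would then apply the spatially discrete stability estimate (the discrete analogue of \cref{thm:UniqueExistence} established earlier in this section), which rests on the skew-symmetry relation $\sprodeps{\pDgMOpH\vvH_h}{\vvE_h} = \sprodmu{\DgMOpE\vvE_h}{\vvH_h}$ for $\vvH_h,\vvE_h\in\Vapprox$. Testing with $\vve_h$ makes the two curl contributions cancel, leaving $\tfrac{d}{dt}\normmueps{\vve_h} \le \normeps{\Jsurfh - \JsurfhInterp}$, and integrating from zero yields
\begin{equation*}
  \normmueps{\vvu_h(t) - \vcu_h(t)} \le \int_0^t \normeps[\big]{\LiftOp(\Ident - \FInterp)\Jsurf(s)}\ \mathrm{d}s .
\end{equation*}
It therefore remains to bound the integrand by $C h^{\min\{s,k+1/2\}}$ uniformly in $s\in[0,T]$.

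\textbf{Boundedness of the lift operator with the expected $h^{-1/2}$ loss.}
The next step is a sharp bound for $\LiftOp$ in the discrete norm. Testing the defining identity \cref{eq:LiftOp} against $\TeFunH = \LiftOp\vvV$ and applying Cauchy--Schwarz face-by-face gives
\begin{equation*}
  \normeps{\LiftOp\vvV}^2
  \le \Bigl(\sum_{\face\in\Finterface}\norm{\vvV}_{\lebfuns{2}{\face}}^2\Bigr)^{1/2}
      \Bigl(\sum_{\face\in\Finterface}\norm[\big]{\avg{\LiftOp\vvV}^{\overline{\mu c}}_\face}_{\lebfuns{2}{\face}}^2\Bigr)^{1/2}.
\end{equation*}
Since $\LiftOp\vvV\in\Vapprox$, the discrete trace (inverse) inequality bounds each elementwise face trace by $C h_\element^{-1/2}\norm{\LiftOp\vvV}_{\lebfuns{2}{\element}}$; summing over faces using shape regularity and the norm equivalence with $\normeps{\cdot}$ produces the second factor $\le C h^{-1/2}\normeps{\LiftOp\vvV}$. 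Dividing out one power of $\normeps{\LiftOp\vvV}$ yields the $h$-explicit bound $\normeps{\LiftOp\vvV} \le C h^{-1/2}\norm{\vvV}_{\lebfuns{2}{\Fint}^3}$, with $C$ depending only on shape regularity and $\varepsilon,\mu$. (This is exactly the lift bound already used for \cref{thm:SpatialConvergence}.)

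\textbf{Interpolation error on the interface and the exponent bookkeeping.}
Finally I combine this with an interpolation estimate for $(\Ident - \FInterp)\Jsurf$ on $\Finterface$. By \cref{eq:SpatialRegularityInterpolation}, $\vvH(t)\in\sobfuns{1+s}{\meshh}^3$, so the elementwise trace theorem ($1+s>1/2$) gives $\Jsurf(t)=\jmp{\vvH(t)\times\nint}_\Fint\in\sobfuns{1/2+s}{\Finterface}^3$, with the trace controlled by $\norm{\vvu(t)}_{\sobfuns{1+s}{\meshh}^6}$. Here the hypothesis $s>1/2$ is precisely what guarantees $1/2+s>1$, so that the nodal face interpolant $\FInterp$ of \cref{ass:FaceUnisolvence,ass:FaceMatching} is well defined. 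The standard polynomial interpolation error on each $(d{-}1)$-dimensional face then gives $\norm{(\Ident-\FInterp)\Jsurf}_{\lebfuns{2}{\Fint}^3} \le C h^{\min\{1/2+s,\,k+1\}}\norm{\Jsurf}_{\sobfuns{1/2+s}{\Finterface}^3}$. Multiplying by the $h^{-1/2}$ factor from the lift bound gives the integrand estimate $C h^{\min\{1/2+s,k+1\}-1/2} = C h^{\min\{s,\,k+1/2\}}$, and integrating over $[0,T]$ with the uniform-in-time trace bound closes the argument. The main obstacle — and the reason the order is $\min\{s,k+1/2\}$ rather than $\min\{s,k\}$ — is this delicate cancellation: the $h^{-1/2}$ blow-up of $\LiftOp$ is exactly compensated by the extra half-order of smoothness $s+1/2$ that the \emph{trace} of the $\sobfuns{1+s}{\meshh}$-regular solution enjoys on the lower-dimensional interface, so the whole estimate hinges on correctly tracking this trace regularity together with the interpolation saturation at $k+1$.
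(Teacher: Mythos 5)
Your proposal follows the paper's route in all essentials: the same error equation with vanishing initial data and defect $\LiftOp(\Ident-\FInterp)\Jsurf$, the same stability mechanism (the paper writes the solution with the variation-of-constants formula for the unitary semigroup generated by $\pDgMOp$, which is equivalent to your energy argument via skew-adjointness, \cref{lem:SkewAdjointMOp}), and the same regularity bookkeeping $\Jsurf(t)=\jmp{\vvH(t)\times\nint}_\Fint\in\sobfuns{1+\kappa}{\Finterface}^3$ with $\kappa=s-1/2>0$, so that the exponents combine to $\min\monosetc{s,k+1/2}$.

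There is, however, one step that as written would fail under the paper's mesh assumptions. You factorize the defect estimate into a \emph{global} lift bound $\normeps{\LiftOp\vvV}\le C h^{-1/2}\norm{\vvV}_{\lebfuns{2}{\Fint}^3}$ times a \emph{global} interpolation bound $Ch^{\min\monosetc{s+1/2,k+1}}$. The discrete trace inequality produces the factor $h_\element^{-1/2}$ with the \emph{local} element diameter, and since $h_\element\le h$ one has $h_\element^{-1/2}\ge h^{-1/2}$; your global constant is therefore really $C(\min_\element h_\element)^{-1/2}$ over the interface-adjacent elements. Pairing this with the global interpolation rate $h^{r+1}$ (where $h=\max_\element h_\element$) yields $h^{r+1/2}$ only if the mesh is (at least locally) quasi-uniform --- which the paper never assumes: it assumes shape regularity only. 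The paper's \cref{lem:LiftOpInterpolationEstimate} avoids globalizing by inserting the local weight $\omega_\face=\min\monosetc{h_{\elementKFl},h_{\elementKFr}}$ into the face-by-face Cauchy--Schwarz, keeping the interpolation error per face as $h_\face^{r+1}$, and using shape regularity in the form $h_\face\le\sigma\rho_\face\le\sigma\omega_\face$ (via $\rho_\element\le\rho_\face$) so that the weight loss is a purely local half power, $h_\face^{2r+1}$ per face. Your argument is repaired by the same device --- insert the weights $\omega_\face^{\mp1}$ into your own Cauchy--Schwarz step --- a small change, but necessary for the stated generality. Finally, your parenthetical claim that this lift bound ``is exactly the lift bound already used for \cref{thm:SpatialConvergence}'' misreads the paper: in that proof the lifted surface current cancels identically in the defect through the non-consistency identity of \cref{lem:Consistency}~\labelcref{lem:Consistency:NonConsisten}, and no bound on $\LiftOp$ is used there at all.
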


The following corollary follows immediately from \cref{thm:SpatialConvergence,thm:InterpolationComparison}.
\begin{corollary}\label{cor:InterpolationSpatialConvergence}
  Under the assumptions of \cref{thm:InterpolationComparison}, it holds
  \begin{equation*}
    \normmueps{\vvu(t) - \vcu_h(t)} \leq C h^{\regcoeff},
  \end{equation*}
  a constant \(C>0\) which is independent of \(h\).
  Here, \(\regcoeff = \min\monosetc{s,k}\) with \(k\) denoting the polynomial degree of the approximation space \cref{eq:BrokenPolySpace}. 
\end{corollary}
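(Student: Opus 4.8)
The plan is to derive the bound from a single application of the triangle inequality, inserting the non-interpolated semidiscrete solution $\vvu_h$ of \cref{eq:SemiDiscreteMaxwell} as an intermediate reference. Concretely, I would write
\[
  \normmueps{\vvu(t) - \vcu_h(t)} \leq \normmueps{\vvu(t) - \vvu_h(t)} + \normmueps{\vvu_h(t) - \vcu_h(t)},
\]
so that the first summand is precisely the spatial discretization error controlled by \cref{thm:SpatialConvergence}, while the second is the interpolation defect controlled by \cref{thm:InterpolationComparison}.

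Next I would check that the hypotheses of both theorems are available. The corollary inherits the assumptions of \cref{thm:InterpolationComparison}, namely \cref{ass:interface_aligned,ass:FaceUnisolvence,ass:FaceMatching} together with the regularity \cref{eq:SpatialRegularityInterpolation} for some $s > 1/2$. Since $s > 1/2 \geq 0$ and \cref{ass:interface_aligned} holds, every hypothesis of \cref{thm:SpatialConvergence} is satisfied as well; hence its estimate $\normmueps{\vvu(t) - \vvu_h(t)} \leq C h^{\min\monosetc{s,k}}$ applies to the first term, and \cref{thm:InterpolationComparison} bounds the second by $C h^{\min\monosetc{s,k+1/2}}$.

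Finally I would reconcile the two exponents. Because $k + 1/2 > k$, we have $\min\monosetc{s,k+1/2} \geq \min\monosetc{s,k} = \regcoeff$, so for $h \leq 1$ the interpolation contribution obeys $h^{\min\monosetc{s,k+1/2}} \leq h^{\regcoeff}$ and is therefore subdominant. Adding the two estimates and absorbing the resulting factor into the constant yields $\normmueps{\vvu(t) - \vcu_h(t)} \leq C h^{\regcoeff}$ uniformly in $t \in [0,T]$. There is no real obstacle here; the only points deserving a moment's attention are confirming that the stronger demand $s > 1/2$ of \cref{thm:InterpolationComparison} still lies in the admissible range $s \geq 0$ of \cref{thm:SpatialConvergence}, and observing that the faster rate $k + 1/2$ coming from the interpolation comparison does not raise the overall order, which remains capped at $\min\monosetc{s,k}$ by the spatial discretization error.
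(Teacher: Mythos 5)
Your proof is correct and matches the paper's own reasoning: the paper simply notes that the corollary ``follows immediately from'' \cref{thm:SpatialConvergence,thm:InterpolationComparison}, which is exactly your triangle-inequality decomposition through the intermediate solution \(\vvu_h\). Your additional care in checking that \(s > 1/2\) satisfies the \(s \geq 0\) hypothesis of \cref{thm:SpatialConvergence} and that \(\min\monosetc{s,k+1/2} \geq \min\monosetc{s,k}\) makes explicit what the paper leaves implicit.
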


\subsection*{Stability}\label{subsec:Stability}

We proceed by proving a discrete analogue to the stability bound \cref{eq:StabilityBound}.

The broken \(L^2\)-projection \(\OProj\), defined in \cref{eq:BrokenOrthProjection}, has the following \piecewise{} approximation properties, see, \eg{}, \cite[Sec.~18.4]{Ern.G.2021.FEAI}.
\begin{lemma}\label{lem:BrokenOrthApprox}
    For all \(\element \in\meshh\) and all \(v\in\sobfuns{1+s}{\element}\) with \(s\geq 0\) it holds
    \begin{subequations}
        \begin{align}
            \norm{v - \OProj v}_{\lebfuns{2}{\element}} 
                &\leq C h^{\regcoeff+1}_{\element} |v|_{\sobfuns{\regcoeff + 1}{\element}} ,
                \label{eq:ElementApprox}\\
            \norm{v-\OProj v}_{\lebfuns{2}{\face}}
                &\leq C h^{\regcoeff + 1/2}_{\element} |v|_{\sobfuns{\regcoeff+1}{\element}} ,
                \label{eq:FaceApprox}
            \end{align}
        \end{subequations}
    with constants \(C>0\) that are independent of \(h_\element\).
    Here, \(\regcoeff = \min\monosetc{s,k}\) with \(k\) denoting the polynomial degree of the approximation space \cref{eq:BrokenPolySpace}.
\end{lemma}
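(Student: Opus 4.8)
The plan is to reduce both estimates to a single element and then invoke a standard scaling plus Bramble--Hilbert argument. Indeed, by the element-wise definition \cref{eq:BrokenOrthProjection} the restriction \(\restrict{\OProj}{\element}\) coincides with the local \(\lebfuns{2}{\element}\)-orthogonal projection onto \(\PolyAtMost{3}{k}[\element]\), so it suffices to prove \cref{eq:ElementApprox,eq:FaceApprox} on each fixed \(\element\in\meshh\) separately, with constants that do not depend on \(\element\).

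First I would pass to the reference element \(\widehat{\element}\) via the affine map \(T_\element\defpnt\widehat{\element}\to\element\). Since \(T_\element\) maps \(\PolyAtMost{3}{k}[\widehat{\element}]\) onto \(\PolyAtMost{3}{k}[\element]\), the local projection commutes with the pullback \(v\mapsto \widehat v \coloneqq v\circ T_\element\). On \(\widehat{\element}\) the reference projection \(\widehat{\OProj}\) reproduces every polynomial of degree at most \(k\), hence in particular of degree at most \(\regcoeff\); a fractional Bramble--Hilbert / Deny--Lions estimate therefore yields \(\norm{\widehat v-\widehat{\OProj}\widehat v}_{\lebfuns{2}{\widehat{\element}}}\lesssim \singlenorm{\widehat v}_{\sobfuns{\regcoeff+1}{\widehat{\element}}}\), with a constant depending only on \(\widehat{\element}\), \(k\) and \(\regcoeff\). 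Scaling this estimate back and tracking the Jacobian powers produces \cref{eq:ElementApprox}, where shape regularity (\(h_\element\le\sigma\rho_\element\)) is used to bound the norms of \(T_\element\) and its inverse uniformly in \(h\).

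For the face bound \cref{eq:FaceApprox} I would additionally use a scaled trace inequality of the form \(\norm{w}_{\lebfuns{2}{\face}}\lesssim h_\element^{-1/2}\norm{w}_{\lebfuns{2}{\element}}+h_\element^{1/2}\singlenorm{w}_{\sobfuns{1}{\element}}\) applied to \(w=v-\OProj v\). The first term is controlled directly by the already-established \cref{eq:ElementApprox}, while the \(\sobfuns{1}{\element}\)-seminorm of the projection error is estimated by the same reference-element mechanism, which costs one half power of \(h_\element\) relative to the volume estimate and thereby yields the exponent \(\regcoeff+1/2\).

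The main technical point is the treatment of non-integer \(s\): the Bramble--Hilbert step must be formulated for fractional Sobolev seminorms on \(\widehat{\element}\), and the appearance of \(\regcoeff=\min\monosetc{s,k}\) reflects the two regimes, since for \(s\le k\) the full regularity of \(v\) is exploited, whereas for \(s>k\) the projection only reproduces \(\PolyAtMost{3}{k}\), capping the attainable order at \(k+1\). As both the fractional approximation estimate and the scaled trace inequality are classical, this is precisely the content of \cite[Sec.~18.4]{Ern.G.2021.FEAI}, to which one may otherwise simply refer.
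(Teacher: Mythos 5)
Your proposal is correct and takes essentially the same route as the paper: the paper offers no proof of \cref{lem:BrokenOrthApprox} at all, but simply cites \cite[Sec.~18.4]{Ern.G.2021.FEAI}, and that reference proves the result by precisely the mechanism you describe --- localization of \(\OProj\), affine pullback to \(\widehat{\element}\), a (fractional) Bramble--Hilbert argument with shape-regular scaling, and a scaled multiplicative trace inequality for \cref{eq:FaceApprox}. Your closing remark that one may ``otherwise simply refer'' to that citation is exactly what the authors do.
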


The following lemma shows an important relation between the Maxwell operators \cref{eq:ContinuousMaxwellOperators} and their discrete counterparts \cref{eq:DiscreteMaxwellOperators}.
\begin{lemma}\label{lem:Consistency}
    \begin{enumerate}
        \item\label{lem:Consistency:Consisten}
            The operators \(\DgMOpH, \DgMOpE\) are consistent, \ie{}, for \(\vvu = (\vvH,\vvE) \in \Vast\) it holds
            \begin{align*}
                \OProj\MOpH \vvH &= \DgMOpH\vvH,\\
                \OProj\MOpE \vvE &= \DgMOpE\vvE.
            \end{align*}
        \item\label{lem:Consistency:NonConsisten}
            The operator \(\pDgMOpH\) is non-consistent, \ie{}, for \(\vtH \in \pVastH\) it holds
            \begin{align*}
                \OProj\pMOpH\vtH &= \pDgMOpH\vtH - \LiftOp(\jmp{\vtH \times \nint}_\Fint).
            \end{align*}
    \end{enumerate}
\end{lemma}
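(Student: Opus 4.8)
The plan is to prove the general, non-consistent identity \labelcref{lem:Consistency:NonConsisten} first and then read off the consistency statements \labelcref{lem:Consistency:Consisten} as the special case of a vanishing tangential jump. Since both $\OProj\pMOpH\vtH$ and $\pDgMOpH\vtH - \LiftOp(\jmp{\vtH\times\nint}_\Fint)$ lie in $\Vapprox$, it suffices to show that their weighted products $\sprodeps{\cdot}{\TeFunH}$ agree for every $\TeFunH\in\Vapprox$. The first step is a reduction that removes the material weight: because $\varepsilon$ is piecewise constant and, by \cref{ass:interface_aligned}, constant on each element, the polynomial $\varepsilon\TeFunH$ again lies in $\Vapprox$, so the defining property \cref{eq:BrokenOrthProjection} of $\OProj$ applied componentwise yields
\begin{equation*}
  \sprodeps{\OProj\pMOpH\vtH}{\TeFunH} = \sum_{\element\in\meshh}\sprod{\curl\vtH}{\TeFunH}_\element.
\end{equation*}

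First I would apply Green's formula on each element, $\sprod{\curl\vtH}{\TeFunH}_\element = \sprod{\vtH}{\curl\TeFunH}_\element - \int_{\partial\element}(\vtH\times\nvK)\cdot\TeFunH\dS$ (here the $\sobfuns{1}{\meshh}$-regularity guarantees well-defined face traces). Summing over $\meshh$, the volume terms reproduce the first sum in \cref{eq:DGpMaxwellOperatorH}, and it remains to reorganize the facet integrals. Boundary faces carry $\nvK=\nvF$ and reproduce exactly the $\Fboundary$-sum. On an interior face shared by $\elementKFl$ and $\elementKFr$ the two element contributions combine into $(\vtH_l\times\nvF)\cdot\TeFunH_l - (\vtH_r\times\nvF)\cdot\TeFunH_r$, and the crucial step is the weighted product-splitting identity
\begin{equation*}
  a_l\cdot c_l - a_r\cdot c_r = -\avg{a}^{\mu c}_\face\cdot\jmp{c}_\face - \avg{c}^{\overline{\mu c}}_\face\cdot\jmp{a}_\face,
\end{equation*}
applied with $a=\vtH\times\nvF$ and $c=\TeFunH$, where $\overline{\mu c}$ denotes the swapped weight. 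Together with the triple-product rewriting $\avg{\vtH}^{\mu c}\cdot(\jmp{\TeFunH}\times\nvF) = -(\avg{\vtH}^{\mu c}\times\nvF)\cdot\jmp{\TeFunH}$, the $\avg{a}^{\mu c}$-term recombines precisely into the interior-facet flux of \cref{eq:DGpMaxwellOperatorH}, leaving the residual $\sum_{\face\in\Finterior}\int_\face\avg{\TeFunH}^{\overline{\mu c}}\cdot\jmp{\vtH\times\nvF}\dS$.

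To finish, I would use that $\vtH\in\pVastH$ has continuous tangential traces within each of the subdomains $\domainlr$, so $\jmp{\vtH\times\nvF}_\face$ vanishes on every interior face outside $\Finterface$; on the interface faces $\nvF=\nint$ by \cref{ass:interface_aligned}, so the residual collapses to $\sum_{\face\in\Finterface}\int_\face\avg{\TeFunH}^{\overline{\mu c}}\cdot\jmp{\vtH\times\nint}_\Fint\dS$, which by the definition \cref{eq:LiftOp} of $\LiftOp$ equals $-\sprodeps{\LiftOp(\jmp{\vtH\times\nint}_\Fint)}{\TeFunH}$. This establishes \labelcref{lem:Consistency:NonConsisten}. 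The consistency claims \labelcref{lem:Consistency:Consisten} then follow at once: for $\vvH\in\VastH\subset\curlfuns{\domain}$, \cref{cor:VanishingJump} gives $\jmp{\vvH\times\nint}_\Fint=0$, so the lift term drops out and $\OProj\MOpH\vvH=\DgMOpH\vvH$; for $\vvE\in\VastE\subset\hcurlfuns{\domain}$ the same computation applies with the roles of $\mu$ and $\varepsilon$ interchanged, the interface jump again vanishing, while the PEC condition $\vvE\times\nvF=0$ on $\Fboundary$ kills the boundary sum, matching the definition \cref{eq:DGMaxwellOperatorE} of $\DgMOpE$.

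I expect the main obstacle to be bookkeeping the two different weights in the product-splitting identity, namely verifying that the field average carries the weight $\mu c$ while the test-function average carries the complementary weight $\overline{\mu c}$, so that the residual reproduces exactly the weight used in \cref{eq:LiftOp}. Keeping the orientation convention $\nvF\cdot\nint=1$ consistent with the sign of $\jmp{\cdot}_\Fint$ is the second place where the signs must be tracked carefully.
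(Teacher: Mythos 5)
Your proposal is correct and follows essentially the same route as the paper's proof: elementwise integration by parts, the weighted average/jump splitting of the facet terms with conjugate weights \(\mu c\) and \(\overline{\mu c}\), identification of the residual interface term with \(-\sprodeps{\LiftOp(\jmp{\vtH\times\nint}_\Fint)}{\TeFunH}\) via \cref{eq:LiftOp}, and removal of \(\OProj\) using that \(\varepsilon\TeFunH\in\Vapprox\). The only (harmless) deviation is that you re-derive part \labelcref{lem:Consistency:Consisten} as the zero-jump special case of \labelcref{lem:Consistency:NonConsisten}, whereas the paper simply cites \cite{Hoc.S.2016} for it.
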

The result \labelcref{lem:Consistency:Consisten} is stated in \cite[Sec.~2.3]{Hoc.S.2016}.
Thus, we only prove \labelcref{lem:Consistency:NonConsisten} involving the new domain special to the \inhomogeneous{} interface problem. 
\begin{proof}
    Let \(\TeFunH\in\Vapprox\).
    With integration by parts, we obtain
    \begin{equation*}
      \begin{aligned}
        \sprodeps{\pMOpH\vtH}{\TeFunH}
        &= 
        \sum_{\element\in\meshh} \sprod{\vtH}{\curl\TeFunH}_\element
        - \sum_{\face\in \Fboundary} \sprod{\vtH\times\nvF}{\TeFunH}_\face\\
        &\quad+ \sum_{\face\in\Finterior} 
              \sprod{\jmp{\vtH}_\face\times\nvF}{\avg{\TeFunH}^{\overline{\mu c}}}_\face\\
        &\quad- \sum_{\face\in\Finterior}
              \sprod{\avg{\vtH}^{\mu c}_\face}{\jmp{\TeFunH}_\face \times\nvF}_\face.
      \end{aligned}
    \end{equation*}
    Thus, with definitions \cref{eq:LiftOp,eq:DGpMaxwellOperatorH}, we see that
    \begin{equation*}
        \sprodeps{\pMOpH\vtH}{\TeFunH} 
        = \sprodeps{\pDgMOpH\vtH}{\TeFunH}
        - \sprodeps{\LiftOp\bigl(\jmp{\vtH}_\Fint \times \nint\bigr)}{\TeFunH}.
    \end{equation*}
    This proves the statement since \(\sprodeps{\pMOpH\vtH}{\TeFunH} = \sprodeps{\OProj\pMOpH\vtH}{\TeFunH}\) by definition of the projection \cref{eq:BrokenOrthProjection}.
\end{proof}

\begin{lemma}\label{lem:ProjectionUnderDGCurl}
    Let \(\vvu\in \pVasth\cap \sobfuns{1+s}{\meshh}^6\) for \(s\geq 0\).
    It holds
    \begin{equation*}
        \normmueps{\pDgMOp (\vvu-\OProj\vvu)} \leq C 
        h^{\regcoeff} \singlenorm{\vvu}_{\sobfuns{\regcoeff+1}{\meshh}^6}
    \end{equation*}
    with a constant \(C>0\) which is independent of \(h\) and \(\vvu\).
    Here, \(\regcoeff=\min\monosetc{s,k}\) and \(k\) denotes the polynomial degree of the approximation space \cref{eq:BrokenPolySpace}.
\end{lemma}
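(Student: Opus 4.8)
The plan is to exploit the block structure of \(\pDgMOp\) together with the duality characterisation of the \(\normmueps{\cdot}\)-norm on \(\Vapprox^2\). Writing \(\vvw = \vvu - \OProj\vvu = (\vvw_H,\vvw_E)\) with \(\vvw_H = \vvH - \OProj\vvH\) and \(\vvw_E = \vvE - \OProj\vvE\), the definition \cref{eq:CombinedDiscreteMaxwellOperatos} gives \(\pDgMOp\vvw = (-\DgMOpE\vvw_E, \pDgMOpH\vvw_H)\), so that
\[
  \normmueps{\pDgMOp\vvw}^2 = \normmu{\DgMOpE\vvw_E}^2 + \normeps{\pDgMOpH\vvw_H}^2 .
\]
Since both summands lie in \(\Vapprox\), I would estimate each by testing against an arbitrary \(\TeFunH\in\Vapprox\) (resp.\ \(\TeFunE\)) and taking the supremum over test functions of unit weighted norm, using throughout that \(\normmu{\cdot}\), \(\normeps{\cdot}\) and \(\norm{\cdot}_{\lebfuns{2}{\domain}}\) are equivalent.

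For the magnetic part I would insert \(\vvw_H\) into the definition \cref{eq:DGpMaxwellOperatorH}. The crucial simplification is that the volume term vanishes: the elementwise curl \(\curl\TeFunH\) is again a piecewise polynomial of degree at most \(k\), hence an element of \(\Vapprox\), so by the defining orthogonality \cref{eq:BrokenOrthProjection} of \(\OProj\) one has \(\sum_{\element}\sprod{\vvw_H}{\curl\TeFunH}_\element = \sprod{\vvH - \OProj\vvH}{\curl\TeFunH}_{\lebfuns{2}{\domain}} = 0\). What remains are the two face terms. Each is estimated by Cauchy--Schwarz on every face, bounding the projection error in the \(\lebfuns{2}{\face}\)-norm by \(h_\element^{\regcoeff+1/2}\singlenorm{\vvH}_{\sobfuns{\regcoeff+1}{\element}}\) via \cref{eq:FaceApprox}, and the trace of the (weighted average, resp.\ jump, of the) polynomial test function by \(h_\element^{-1/2}\norm{\TeFunH}_{\lebfuns{2}{\element}}\) via the discrete trace inequality. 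Each face therefore contributes a factor \(h_\element^{\regcoeff}\), and a discrete Cauchy--Schwarz over faces — using shape-regularity, so that each element has a bounded number of faces and neighbouring mesh sizes are comparable — assembles the local bounds into \(h^{\regcoeff}\singlenorm{\vvH}_{\sobfuns{\regcoeff+1}{\meshh}^3}\normeps{\TeFunH}\). The boundedness of the weights in \(\avg{\cdot}^{\mu c}_\face\), depending only on the fixed ratios of \(\mu\) and \(\sol\), is absorbed into the constant.

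The electric part is handled identically from \cref{eq:DGMaxwellOperatorE}; it is in fact simpler, since it carries no boundary-face contribution, and yields \(\normmu{\DgMOpE\vvw_E}\lesssim h^{\regcoeff}\singlenorm{\vvE}_{\sobfuns{\regcoeff+1}{\meshh}^3}\). Summing the two estimates and taking the square root proves the claim. I expect the only genuinely delicate point to be the bookkeeping of the interior-face sum, where the weighted average \(\avg{\vvw_H}^{\mu c}_\face\) must be split into its two one-sided contributions and each estimated by the per-element bound \cref{eq:FaceApprox}; importantly, this uses only the \emph{average} of the projection error and the \emph{jump} of the smooth test function, so no control of the (possibly large) tangential jump of \(\vvH\) across the interface \(\Fint\) is needed, and the faces in \(\Finterface\) require no special treatment.
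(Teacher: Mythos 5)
Your proposal is correct and takes essentially the same route as the paper, which does not re-derive the estimate but refers to \cite{Hoc.S.2016}: the volume terms vanish by the orthogonality \cref{eq:BrokenOrthProjection} of \(\OProj\), and the face terms are assembled from the face approximation bound \cref{eq:FaceApprox} together with the discrete trace inequality. Your closing observation that only the \emph{average} of the projection error and the \emph{jump} of the (piecewise polynomial, not smooth) test function enter is exactly the locality remark the paper makes to justify that the cited estimates remain valid on the piecewise domain \(\pVasth\), independently of the tangential jump of \(\vvH\) across \(\Fint\).
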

    A proof of this statement is included in \cite[eq.~(5.5)]{Hoc.S.2016}.
    We emphasize that all estimates there hold since they are local to every element \(\element\in\meshh\) and, thus, do not depend on the domain \(\DompMOp\).

The following Lemma is essential for the \wellposedness{} of the \semidiscrete{} problem.
A proof is provided in \cite[Lem.~2.2]{Hoc.S.2016}.
\begin{lemma}\label{lem:SkewAdjointMOp}
    The operator \(\DisMOp\) is skew-adjoint on \(\Vapprox^2\) with respect to the inner product \(\sprodmueps{\cdot}{\cdot}\), \ie{}, for \(\vvu_h,\vvv_h \in \Vapprox^2\) it holds
    \begin{equation*}
        \sprodmueps{\DisMOp\vvu_h}{\vvv_h} = - \sprodmueps{\vvu_h}{\DisMOp\vvv_h}.
    \end{equation*}
\end{lemma}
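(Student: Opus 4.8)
The plan is to reduce the claim to a single discrete integration-by-parts identity relating $\pDgMOpH$ and $\DgMOpE$, and then to verify that identity face by face. Since $\sprodmueps{\cdot}{\cdot}$ is a real inner product, $\DisMOp$ is skew-adjoint if and only if $\sprodmueps{\DisMOp\vvu_h}{\vvu_h}=0$ for all $\vvu_h\in\Vapprox^2$. Using the block form \cref{eq:CombinedDiscreteMaxwellOperatos}, for $\vvu_h=(\vvH,\vvE)$ one computes $\sprodmueps{\DisMOp\vvu_h}{\vvu_h}=\sprodeps{\pDgMOpH\vvH}{\vvE}-\sprodmu{\DgMOpE\vvE}{\vvH}$. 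Hence, by polarization, skew-adjointness is equivalent to the symmetry relation
\begin{equation*}
  \sprodeps{\pDgMOpH\vvH}{\vvE}=\sprodmu{\DgMOpE\vvE}{\vvH},\qquad\text{for all }\vvH,\vvE\in\Vapprox,
\end{equation*}
which I would establish directly from the definitions \cref{eq:DGpMaxwellOperatorH,eq:DGMaxwellOperatorE}.

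To prove it, I would write out both pairings and combine the element-wise volume terms through the curl Green's formula $\sprod{\vvH}{\curl\vvE}_\element-\sprod{\vvE}{\curl\vvH}_\element=-\int_{\partial\element}(\vvH\times\vvE)\cdot\nvK\dS$. Summing over $\element\in\meshh$ collapses the volume contributions of the two sides into $-\sum_{\element}\int_{\partial\element}(\vvH\times\vvE)\cdot\nvK\dS$, which splits into a sum over $\Fboundary$ and one over $\Finterior$. On each boundary face the resulting term equals $+\sprod{\vvH\times\nvF}{\vvE}_\face$, and this cancels exactly against the perfectly conducting boundary term carried by $\pDgMOpH$ in \cref{eq:DGpMaxwellOperatorH}; consistently, $\DgMOpE$ in \cref{eq:DGMaxwellOperatorE} carries no boundary term at all. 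Thus only the interior faces survive.

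It then remains to match, on every $\face\in\Finterior$, the jump contribution $\int_\face\jmp{(\vvH\times\vvE)\cdot\nvF}$ against the two flux terms $-\sprod{\avg{\vvH}^{\mu c}_\face}{\jmp{\vvE}_\face\times\nvF}_\face+\sprod{\avg{\vvE}^{\varepsilon c}_\face}{\jmp{\vvH}_\face\times\nvF}_\face$. For this I would invoke the bilinear jump identity $\jmp{\vvH\times\vvE}=\jmp{\vvH}\times\avg{\vvE}^{\varepsilon c}+\avg{\vvH}^{\overline{\varepsilon c}}\times\jmp{\vvE}$, where $\overline{\varepsilon c}$ denotes the swapped-weight average, together with elementary scalar-triple-product rearrangements (exploiting that $\nvF$ is constant on $\face$). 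The crux, and the step I expect to be the main obstacle, is the observation that the conjugate weight satisfies $\overline{\varepsilon c}=\mu c$ as averaging operators: the averages $\avg{\cdot}^{\overline{\varepsilon c}}$ and $\avg{\cdot}^{\mu c}$ coincide precisely when $(\mu\varepsilon c^2)_- =(\mu\varepsilon c^2)_+$, which holds with both sides equal to $1$ by the definition $c=(\mu\varepsilon)^{-1/2}$. With this identification the two interior-face contributions reproduce exactly the flux terms above, the symmetry relation follows, and the lemma is proved. What is left is pure bookkeeping, mainly tracking the orientation of $\nvF$ across the two elements of each interior face and keeping the signs in the triple products consistent.
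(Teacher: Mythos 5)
Your proposal is correct, and it essentially reconstructs the argument the paper outsources: the paper gives no in-text proof but cites \cite[Lem.~2.2]{Hoc.S.2016}, where the same discrete integration-by-parts computation is carried out. All the delicate points check out under the paper's conventions. The polarization reduction is valid since \(\sprodmueps{\cdot}{\cdot}\) is a real inner product on the finite-dimensional space \(\Vapprox^2\), and \(\sprodmueps{\DisMOp\vvu_h}{\vvu_h}=\sprodeps{\pDgMOpH\vvH}{\vvE}-\sprodmu{\DgMOpE\vvE}{\vvH}\) with \(\vvH,\vvE\) ranging independently over \(\Vapprox\), so the symmetry relation is indeed equivalent to skew-adjointness. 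With \(\jmp{v}_\face = v|_{\elementKFr}-v|_{\elementKFl}\) and \(\nvF\) pointing from \(\elementKFl\) to \(\elementKFr\), summing the elementwise Green identities produces \(+\sum_{\face\in\Finterior}\int_\face\jmp{\vvH\times\vvE}_\face\cdot\nvF\) plus boundary terms \(+\sprod{\vvH\times\nvF}{\vvE}_\face\) that cancel exactly against the \(\Fboundary\)-sum in \cref{eq:DGpMaxwellOperatorH}, consistent with \(\DgMOpE\) carrying no boundary flux. Your product rule \(\jmp{ab}=\avg{a}^{\omega}\jmp{b}+\jmp{a}\avg{b}^{\overline{\omega}}\) is an identity for any positive piecewise constant weight, and the crux identification \(\avg{\cdot}^{\overline{\varepsilon c}}=\avg{\cdot}^{\mu c}\) reduces, after cross-multiplying, precisely to \((\mu\varepsilon c^2)_-=(\mu\varepsilon c^2)_+=1\), which holds by \(c=(\mu\varepsilon)^{-1/2}\); together with the scalar triple-product rearrangements this matches the interior-face fluxes term by term. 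The same weight duality is implicitly used in the paper's own proof of \cref{lem:Consistency}, where the expansion of \(\sprodeps{\pMOpH\vtH}{\TeFunH}\) pairs \(\jmp{\cdot}\times\nvF\) with \(\avg{\cdot}^{\overline{\mu c}}\), so your argument is fully consistent with the paper's framework.
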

We infer from the skew-adjointness that \(\DisMOp\) is a generator of a unitary \scsemigroup{} on \(\Vapprox^2\).
Therefore, the \semidiscrete{} problem \cref{eq:SemiDiscreteMaxwell} has a unique solution \(\vvu_h(t) = \bigl(\vvH_h(t),\vvE_h(t)\bigr) \in \Vapprox^2\) given by the variation-of-constants formula
\begin{equation*}
    \vvu_h(t) = e^{t\DisMOp}\vvu_h^0 + \int_0^t e^{(t-s)\DisMOp}\bigl(\jvolh(s) + \jsurfh(s)\bigr)\mathrm{d}s,
\end{equation*}
with \(\vvu_h^0 = \bigl(\vvH_h^0,\vvE_h^0\bigr)\), \(\jvolh = (0,-\Jvolh)\) and \(\jsurfh = (0,-\Jsurfh)\).

The following stability bound holds true for the \semidiscrete{} problem and is a discrete analogue to \cref{eq:StabilityBound}.
\begin{theorem}\label{thm:DiscreteStability}
    Under \Cref{ass:interface_aligned} and the assumptions of \cref{thm:SpatialConvergence}, the numerical solution \(\vvu_h = \bigl(\vvH_h,\vvE_H\bigr)\) of \cref{eq:SemiDiscreteMaxwell} is stable, \ie{}, for \(t\in [0,T]\) it holds
    \begin{align*}
        \normmueps{\vvu_h(t)}
          \lesssim&\ \normmueps{\vvu^0} 
          + \norm{\Jsurf(0)}_{\lebfuns{2}{\Fint}^3}
          + \norm{\Jsurf(t)}_{\lebfuns{2}{\Fint}^3}\\
          &+ \int_0^t \norm{\Jvol(s)}_{\lebfuns{2}{\domain}^3}\ \mathrm{d}s
          + \int_0^t \norm{\dt\Jsurf(s)}_{\TraceSpaceParallel}\ \mathrm{d}s\\
          &+ \int_0^t \norm{\Jsurf(s)}_{\Hparallelspace}\ \mathrm{d}s,
      \end{align*}
      with a constant which is independent of \(h\) and \(\vvu\).
\end{theorem}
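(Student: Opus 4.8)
The plan is to transfer the proof of \cref{thm:UniqueExistence} to the discrete level, using the skew-adjoint operator from \cref{lem:SkewAdjointMOp} in place of $\MOp$ and a \emph{projected} extension $\OProj\JextH[]$ in place of the continuous shift $\vtH=\vvH-\JextH[]$. The only genuinely new feature is that the discrete curl is non-consistent, so the surface-current term must first be rewritten through \cref{lem:Consistency}\labelcref{lem:Consistency:NonConsisten}.

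First I would rewrite the lift. The extension $\JextH[]$ from \cref{thm:SurfaceChargeExtension} satisfies $\jmp{\JextH[](t)\times\nint}_\Fint=\Jsurf(t)$ and, since $\JextH[]\in\psobfuns{1}^3$ and the mesh resolves $\Fint$ by \cref{ass:interface_aligned}, lies in $\pVastH\subset\sobfuns{1}{\meshh}^3$. Hence \cref{lem:Consistency}\labelcref{lem:Consistency:NonConsisten} yields
\[
  \Jsurfh[] = \LiftOp\Jsurf = \LiftOp\bigl(\jmp{\JextH[]\times\nint}_\Fint\bigr) = \pDgMOpH\JextH[] - \OProj\varepsilon^{-1}\curl\JextH[].
\]
Introducing the discrete shift $\ufth[]=\ufh[]-(\OProj\JextH[],0)$ and inserting this identity, a direct computation (using $\DgMOpH\OProj\JextH[]-\pDgMOpH\JextH[]=-\pDgMOpH(\JextH[]-\OProj\JextH[])$, valid as both arguments lie in $\pVasthH$) turns \cref{eq:SemiDiscreteMaxwell} into the shifted discrete Cauchy problem
\[
  \dt\ufth[] = \DisMOp\,\ufth[] + \bigl(-\OProj\dt\JextH[],\ -\pDgMOpH(\JextH[]-\OProj\JextH[]) + \OProj\varepsilon^{-1}(\curl\JextH[]-\Jvol)\bigr).
\]
This is exactly the discrete analogue of \cref{eq:ShiftedMaxwellSystemCombined}, augmented by the non-consistency defect $-\pDgMOpH(\JextH[]-\OProj\JextH[])$.

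Since $\DisMOp$ generates a unitary \scsemigroup{} on $(\Vapprox^2,\sprodmueps{\cdot}{\cdot})$, the variation-of-constants formula together with the triangle inequality bounds $\normmueps{\ufth[](t)}$ by $\normmueps{\ufth[](0)}$ plus the time integral of the $\normmueps{\cdot}$-norm of the inhomogeneity. I would then estimate each term as in the proof of \cref{thm:UniqueExistence}: the $L^2$-stability of $\OProj$ (with $h$-independent constant) removes all projections, and the three bounds of \cref{thm:SurfaceChargeExtension} convert the norms of $\JextH[]$, $\dt\JextH[]$, and $\curl\JextH[]$ into the surface-current norms appearing in the claim. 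The initial data contributes $\normmueps{\vvu^0}+\norm{\Jsurf(0)}_{\lebfuns{2}{\Fint}^3}$ (recall $\vvu_h^0=\OProj\vvu^0$), and undoing the shift via $\normmueps{\ufh[](t)}\leq\normmueps{\ufth[](t)}+\normmu{\OProj\JextH[](t)}$ produces the term $\norm{\Jsurf(t)}_{\lebfuns{2}{\Fint}^3}$.

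The only genuinely new estimate, and the step I expect to be the main obstacle, is the non-consistency defect $\normeps{\pDgMOpH(\JextH[]-\OProj\JextH[])}$. I would control it with \cref{lem:ProjectionUnderDGCurl} applied to $(\JextH[],0)\in\pVasth\cap\sobfuns{1}{\meshh}^6$. The crucial observation is that, for a \emph{stability} rather than a convergence bound, the regularity $s=0$ provided by $\JextH[]\in\psobfuns{1}^3$ already suffices: the resulting factor $h^{\regcoeff}=h^{0}=1$ is harmless, and one is left with $\normeps{\pDgMOpH(\JextH[]-\OProj\JextH[])}\leq C\singlenorm{\JextH[]}_{\sobfuns{1}{\meshh}^3}\lesssim\norm{\JextH[]}_{\psobfuns{1}^3}$, which \cref{thm:SurfaceChargeExtension} bounds by $\norm{(\JsurfTwo,\JsurfThree)}_{\LaplaceThree^{1/4}\times\LaplaceTwo^{1/4}}$. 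Collecting all contributions, with every constant independent of $h$ and $\vvu$, then gives the asserted bound.
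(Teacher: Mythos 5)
Your proposal is correct and follows essentially the same route as the paper: the same discrete shift \(\ufth[] = \ufh[] - (\OProj\JextH[],0)\), the non-consistency identity from \cref{lem:Consistency}~\labelcref{lem:Consistency:NonConsisten} combined with \cref{thm:SurfaceChargeExtension}~\labelcref{thm:SurfaceChargeExtension:Jump}, the unitary group generated by \(\DisMOp\) via \cref{lem:SkewAdjointMOp}, and \cref{lem:ProjectionUnderDGCurl} with \(s=0\) for the defect \(\pDgMOpH(I-\OProj)\JextH[]\), yielding exactly the paper's remainder \(\vtr_h\). The only (immaterial) difference is that you apply the consistency lemma to rewrite the lift \(\Jsurfh[]\) before shifting, whereas the paper shifts first and then rewrites \(\pDgMOp\OProj\jextH[]\); the resulting computation is identical.
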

\begin{proof}

    We proceed similar to the proof of \cref{thm:UniqueExistence} and introduce a shifted \semidiscrete{} solution \(\vtu_h(t) = \vvu_h(t) - \OProj \jextH(t)\), where \(\jextH(t) = \bigl(\JextH(t),0\bigr)\) denotes the extension of \cref{cor:NewExtensionCoro}.
    Thus, the shifted solution solves
    \begin{align*}
        \dt \vtu_h(t) 
        &= \pDgMOp \vvu_h(t) + \jvolh(t) + \jsurfh(t) - \OProj \dt \jextH(t)\\
        &= \pDgMOp \vtu_h(t) + \pDgMOp \OProj \jextH(t) + \jvolh(t) + \jsurfh(t) - \OProj \dt \jextH(t)
    \end{align*}
    By \cref{lem:Consistency} it holds
    \begin{equation*}
        \OProj \pMOp \jextH(t) 
        = \pDgMOp \jextH(t) -\Bigl(0, \LiftOp\bigl(\jmp{\JextH(t)\times \nint}_{\Fint}\bigr)\Bigr)
        = \pDgMOp \jextH(t) + \jsurfh(t).
    \end{equation*}
    Therefore, we obtain \(\dt \vtu_h(t) = \pDgMOp \vtu_h(t) + \vtr_h(t)\), with
    \begin{equation*}
        \vtr_h(t) = 
        - \pDgMOp \bigl(I-\OProj\bigr)\jextH(t)
        + \OProj\bigl(\jvol(t) + \pMOp\jextH(t) - \dt \jextH(t)\bigr).
    \end{equation*}
    We emphasize that \(\vtr_h(t) \in \Vapprox^2\) and thus write the solution by means of the 
    variations-of-constants formula as
    \begin{equation*}
        \vtu_h(t) = e^{t\DisMOp}\bigl(\vvu_h^0 - \OProj \jextH(0)\bigr)
        + \int_0^t e^{(t-s)\DisMOp} \vtr_h(s)\ \mathrm{d}s.
    \end{equation*}
    Furthermore, since \(\DisMOp\) generates a unitary \scsemigroup{} on \(\Vapprox^2\), we obtain that
    \begin{equation*}
        \normmueps{\vtu_h(t)} \leq \normmueps{\vvu^0} + \normmueps{\jextH(0)} + \int_0^t \normmueps{\vtr_h(s)}\ \mathrm{d}s 
    \end{equation*}
    It remains to bound \(\normmueps{\vtr_h(s)}\).
    By \cref{cor:NewExtensionCoro}, it holds \(\JextH(s) \in \psobfuns{1}^3\) and thus, by \cref{lem:ProjectionUnderDGCurl} with \(s=0\), we conclude that
    \begin{equation*}
        \normmueps{\pDgMOp\bigl(I-\OProj\bigr)\jextH(s)} 
        \leq C \singlenorm{\JextH(s)}_{\sobfuns{1}{\meshh}^3} 
        = C \singlenorm{\JextH(s)}_{\psobfuns{1}^3}.
    \end{equation*} 
    The right-hand side can be further estimated with \cref{cor:NewExtensionCoro}, and we obtain
    \begin{equation*}
        \normmueps{\pDgMOp\bigl(I-\OProj\bigr)\jextH(s)}
        \leq C \norm{\Jsurf[](s)}_{\Hparallelspace}.
    \end{equation*}
    The remaining parts of \(\vtr_h(s)\) can be bounded analogously by \cref{cor:NewExtensionCoro}.
    This proves the claim similar to \cref{thm:UniqueExistence}.
\end{proof}

\subsection*{Error analysis}\label{subsec:ErrorAnalysis}

We proceed by proving the main error bounds of this section.

\begin{proof}[Proof of \cref{thm:SpatialConvergence}]\label{prf:SpatialConvergence}
    We define the error \(\vve(t) = \vvu(t) - \vvu_h(t)\), where \(\vvu(t)\) denotes the solution of \cref{eq:MaxwellFormulation} and \(\vvu_h(t)\) denotes the \semidiscrete{} solution of \cref{eq:SemiDiscreteMaxwell}.
    We split the error into \(\vve(t) = \vve_\Pi(t) - \vve_h(t)\) with
    \begin{subequations}\label{eq:SpatialErrors}
      \begin{align}
        \vve_\Pi(t) &= \vvu(t) - \OProj\vvu(t),\\
        \vve_h(t) &= \vvu_h(t) - \OProj\vvu(t).
      \end{align}
    \end{subequations}
    Thus, \(\vve_\Pi(t)\) denotes the best approximation error and \(\vve_h(t)\) the dG-error.
    By \cref{eq:MaxwellFormulation} and  \cref{lem:Consistency}, it holds
    \begin{equation}\label{eq:ProjectionEquation}
        \dt\OProj \vvu(t) = \OProj\bigl(\pMOp\vvu(t) +\jvol\bigr) 
        = \pDgMOp\vvu(t) + \jvolh(t) + \jsurfh(t).
    \end{equation}
    Since \(\ufh[](t)\) solves \cref{eq:SemiDiscreteMaxwell}, \ie{},
    \begin{equation*}
      \dt\ufh[](t) = \pDgMOp\ufh[](t) + \jvolh[](t) + \jsurfh[](t),\ t\in [0,T],
        \quad \ufh[](0) = \OProj \uf[0] ,
    \end{equation*}
    we see that the dG-error solves the initial value problem
    \begin{equation}\label{eq:dGErrorIVP}
        \dt \vve_h(t) = \DisMOp \vve_h(t) + \vvd_\pi(t),\ t\in [0,T],
        \quad \vve_h(0) = 0,
    \end{equation}
    with the defect \(\vvd_\pi(t) = -\pDgMOp \vve_\Pi(t)\).
    We can write the solution of \cref{eq:dGErrorIVP} with the variation-of-constants formula and obtain
    \begin{equation*}
        \vve_h(t) = \int_0^t e^{(t-s)\DisMOp} \vvd_\pi(s)\ \mathrm{d}s.
    \end{equation*}
    Since \(\DisMOp\) is the generator of a unitary \scsemigroup{} on \(\Vapprox^2\), we conclude with \cref{lem:ProjectionUnderDGCurl} that
    \begin{equation*}
        \normmueps{\vve_h(t)} 
        \leq \int_0^t \normmueps{\vvd_\pi(s)}\ \mathrm{d}s
        \leq C h^{\regcoeff} \int_0^t \singlenorm{\vvu(s)}_{\sobfuns{\regcoeff+1}{\meshh}^6}\ \mathrm{d}s
    \end{equation*}
    with a constant independent of \(h\) and \(\vvu\).
    Together with the approximation properties of \cref{lem:BrokenOrthApprox}, we obtain
    \begin{align*}
        \normmueps{\vve(t)} &\leq \normmueps{\vve_\Pi(t)} + \normmueps{\vve_h(t)}\\
        &\leq \widetilde{C} h^{\regcoeff + 1} \singlenorm{\vvu(t)}_{\sobfuns{\regcoeff + 1}{\meshh}^6} 
            + C h^\regcoeff \int_0^t \singlenorm{\vvu(s)}_{\sobfuns{\regcoeff+1}{\meshh}^6}\ \mathrm{d}s ,
    \end{align*}
    which proves the claim.
\end{proof}

\begin{remark}
    Note that the stability result of \cref{thm:DiscreteStability} is not used in the proof of \cref{thm:SpatialConvergence}.
    The reason for that is the fact that the lifted surface current appears in \cref{eq:ProjectionEquation} due to \cref{lem:Consistency}~\labelcref{lem:Consistency:NonConsisten}.
    Thus, there is no contribution of the surface current in the defect.
    This, on the other hand, assumes that the lifted surface current can be calculated exactly which is not feasible in practice, \cf{} \cref{subsec:NodalInterpolation}.

    The following section deals with errors introduced due to nodal interpolation on the interface.
\end{remark}

\subsection*{Interpolation error}

The following local estimates for the nodal interpolation hold,
compare for example \cite[Thm.~11.13]{Ern.G.2021.FEAI}.
\begin{lemma}\label{lem:NodalElementApprox}
    For all \(\element \in\meshh\) and all \(v\in\sobfuns{1+s}{\element}\) with \(s>1/2\) it holds
    \begin{equation}\label{eq:NodalElementApprox}
        \norm{v - \NInterp v}_{\lebfuns{2}{\element}} 
            \leq C h^{\regcoeff + 1}_{\element} |v|_{\sobfuns{\regcoeff+1}{\element}}
    \end{equation}
    with a constant \(C>0\) which is independent of \(h_\element\).
    Here, \(\regcoeff = \min\monosetc{s,k}\) and \(k\) denotes the polynomial degree of the approximation space \cref{eq:BrokenPolySpace}.
\end{lemma}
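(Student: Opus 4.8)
The plan is to follow the classical transform-to-the-reference-element strategy, working elementwise since $\restrict{\NInterp v}{\element} = \NInterp_\element v$ gives $\norm{v-\NInterp v}_{\lebfuns{2}{\element}} = \norm{v - \NInterp_\element v}_{\lebfuns{2}{\element}}$. Let $T_\element\defpnt\widehat{\element}\to\element$, $T_\element(\widehat{x}) = B_\element\widehat{x}+b_\element$, be the affine map onto the reference element, and let $\widehat{\NInterp}$ denote the Lagrange interpolation operator on $\widehat{\element}$ associated with the nodes $\Sigma_{\widehat{\element}}$. For $\widehat{v} = v\circ T_\element$ the definition of $\NInterp_\element$ yields the commuting relation $(\NInterp_\element v)\circ T_\element = \widehat{\NInterp}\,\widehat{v}$, so the whole estimate reduces, after affine scaling, to a single bound on $\widehat{\element}$.

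First I would establish the reference estimate
\[
  \norm{\widehat{v} - \widehat{\NInterp}\,\widehat{v}}_{\lebfuns{2}{\widehat{\element}}}
  \leq C\,\singlenorm{\widehat{v}}_{\sobfuns{\regcoeff+1}{\widehat{\element}}}
\]
for $\widehat{v}\in\sobfuns{1+s}{\widehat{\element}}$. The hypothesis $s>1/2$ enters precisely here: since then $1+s > 3/2 = d/2$ in dimension $d=3$, the embedding $\sobfuns{1+s}{\widehat{\element}}\hookrightarrow \confuns{0}{\overline{\widehat{\element}}}$ holds and the point evaluations defining $\widehat{\NInterp}$ are continuous, so $\widehat{\NInterp}\defpnt\sobfuns{1+s}{\widehat{\element}}\to\lebfuns{2}{\widehat{\element}}$ is bounded. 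As Lagrange interpolation reproduces polynomials, $\widehat{\NInterp}\,\widehat{p} = \widehat{p}$ for all $\widehat{p}\in\PolyAtMost{3}{k}[\widehat{\element}]$, hence for all polynomials of degree at most $\regcoeff\leq k$. Writing $\widehat{v} - \widehat{\NInterp}\,\widehat{v} = (I - \widehat{\NInterp})(\widehat{v} - \widehat{p})$ and minimising over such $\widehat{p}$, the bound follows from the (fractional) Bramble--Hilbert lemma. Note that $v\in\sobfuns{\regcoeff+1}{\element}$ in either regime: if $s\leq k$ then $\regcoeff+1 = s+1$, while if $s>k$ then $\regcoeff+1 = k+1\leq s+1$.

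Second I would transfer the estimate to $\element$. Combining the $L^2$-scaling $\norm{v - \NInterp_\element v}_{\lebfuns{2}{\element}} = \singlenorm{\det B_\element}^{1/2}\norm{\widehat{v} - \widehat{\NInterp}\,\widehat{v}}_{\lebfuns{2}{\widehat{\element}}}$ with the seminorm scaling $\singlenorm{\widehat{v}}_{\sobfuns{\regcoeff+1}{\widehat{\element}}}\leq C\,\norm{B_\element}^{\regcoeff+1}\singlenorm{\det B_\element}^{-1/2}\singlenorm{v}_{\sobfuns{\regcoeff+1}{\element}}$, the Jacobian factors cancel and leave $\norm{B_\element}^{\regcoeff+1}$. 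Shape regularity, $h_\element\leq\sigma\rho_\element$, gives $\norm{B_\element}\leq C h_\element$, which yields $\norm{v - \NInterp v}_{\lebfuns{2}{\element}}\leq C h_\element^{\regcoeff+1}\singlenorm{v}_{\sobfuns{\regcoeff+1}{\element}}$ on each $\element$, as claimed.

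The main obstacle is the non-integer case $0<s<k$ with $s\notin\bbN$, where $\regcoeff = s$ is fractional. There both the Bramble--Hilbert estimate and the affine scaling of the seminorm must be carried out in fractional Sobolev spaces, whose seminorms are defined through Gagliardo double integrals and therefore do not scale by a single clean power of $\norm{B_\element}$ without an extra step, typically interpolation between the two neighbouring integer orders. This is exactly the content of \cite[Thm.~11.13]{Ern.G.2021.FEAI}, on which the lemma relies; once the reference estimate and the scaling are in place, the integer-order case is routine.
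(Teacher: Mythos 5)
Your proposal is correct and coincides with the paper's treatment: the paper offers no proof of this lemma but simply cites \cite[Thm.~11.13]{Ern.G.2021.FEAI}, and the argument you reconstruct (commuting with the affine map, continuity of point evaluations from $\sobfuns{1+s}{\widehat{\element}}\hookrightarrow \confuns{0}{\overline{\widehat{\element}}}$ since $\regcoeff+1>3/2$, polynomial invariance plus a fractional Bramble--Hilbert bound, and affine scaling of the Gagliardo seminorm under shape regularity) is exactly the proof behind that citation. The one spot worth stating explicitly is that the boundedness of $\widehat{\NInterp}$ is needed at order $\regcoeff+1$ rather than $1+s$, which holds because $\regcoeff=\min\monosetc{s,k}>1/2$; this is implicit in your argument and does not affect its correctness.
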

In order to obtain approximation properties for the local interpolation operator \(\FInterp\) on the \submesh{}, we need to ensure that \(\Finterface\) does not degenerate, \ie{} that the \submesh{} is again shape regular.
Recall the following notation.
For \(\face\in\Finterface\), we denote with \(h_\face\) the largest diameter of \(\face\) and with \(\rho_\face\) the diameter of the largest inscribing ball of \(\face\).
It is clear from the definition that \(h_\face \leq h_\element\).
Furthermore, \cite[Thm.~10, (10)]{Gan.L.1996} shows that \(\rho_\element\leq \rho_\face\), \ie{}, the diameter of the largest inscribing ball of \(\element\) is always less or equal to the diameter of the largest inscribing ball of \(\face\).
Therefore,
\begin{equation*}
  h_\element \leq \sigma \rho_\element
  \implies h_\face \leq \sigma \rho_\face,
\end{equation*}
\ie{}, the \submesh{} \(\Finterface\) inherits the shape regularity from \(\meshh\).
We infer again from \cite[Thm.~11.13]{Ern.G.2021.FEAI} the following approximation properties.
\begin{lemma}\label{lem:NodalFaceApprox}
    For all \(\face \in \Finterface\) and all \(w \in\sobfuns{1 + s}{\face}\) with \(s>0\) it holds
    \begin{equation}\label{eq:NodalFaceApprox}
        \norm{w-\FInterp w}_{\lebfuns{2}{\face}}
            \leq \CFInterp h^{\regcoeff + 1}_{\face} |w|_{\sobfuns{\regcoeff + 1}{\face}}
    \end{equation}
    with a constant \(\CFInterp>0\) which is independent of \(h_\face\).
    Here, \(\regcoeff = \min\monosetc{s,k}\) and \(k\) denotes the polynomial degree of the approximation space \cref{eq:BrokenPolySpace}.
\end{lemma}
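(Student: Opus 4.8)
The plan is to obtain \cref{eq:NodalFaceApprox} as a direct instance of the standard local nodal interpolation estimate \cite[Thm.~11.13]{Ern.G.2021.FEAI}, now read on the interface mesh \(\Finterface\) rather than on \(\meshh\); indeed, this is exactly how the analogous bound in \cref{lem:NodalElementApprox} is obtained. To invoke that theorem I must supply two ingredients: first, that the triples \((\face,\PolyAtMost{2}{k}[\face],\Sigma_\face)_{\face\in\Finterface}\) form an affine, shape-regular finite element family on \(\Fint\) generated from a single reference element; second, that the assumed regularity \(\sobfuns{1+s}{\face}\) with \(s>0\) is high enough for the nodal functionals to be bounded, so that \(\FInterp_\face\) is well-defined and the Bramble--Hilbert argument applies.

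For the first ingredient I would note that, by \cref{ass:FaceUnisolvence,ass:FaceMatching} together with the remark preceding this lemma (which relies on \cite[Lem.~20.2]{Ern.G.2021.FEAI}), each triple \((\face,\PolyAtMost{2}{k}[\face],\Sigma_\face)\) is a finite element, and all of them are the push-forward of one reference face element \((\widehat{\face},\PolyAtMost{2}{k}[\widehat{\face}],\Sigma_{\widehat{\face}})\) under affine diffeomorphisms \(T_\face\colon\widehat{\face}\to\face\). Shape regularity of \(\Finterface\) has already been established just above: the bound \(h_\element\le\sigma\rho_\element\) for \(\meshh\) transfers to \(h_\face\le\sigma\rho_\face\) via \(h_\face\le h_\element\) and the inscribed-ball inequality \(\rho_\element\le\rho_\face\) of \cite[Thm.~10,(10)]{Gan.L.1996}. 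Hence the geometric constants entering \cite[Thm.~11.13]{Ern.G.2021.FEAI} are uniform in \(h\).

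For the second ingredient the faces are two-dimensional, so \(1+s>d/2=1\) exactly when \(s>0\), giving the continuous embedding \(\sobfuns{1+s}{\widehat{\face}}\hookrightarrow\confuns{0}{\overline{\widehat{\face}}}\). Thus point evaluation at the nodes of \(\Sigma_{\widehat{\face}}\) defines bounded functionals, \(\FInterp_{\widehat{\face}}\colon\sobfuns{1+s}{\widehat{\face}}\to\PolyAtMost{2}{k}[\widehat{\face}]\) is bounded, and it reproduces polynomials of degree at most \(k\). The Bramble--Hilbert lemma then yields the reference estimate
\begin{equation*}
  \norm{\widehat{w}-\FInterp_{\widehat{\face}}\widehat{w}}_{\lebfuns{2}{\widehat{\face}}}
    \le \CFInterp\,\singlenorm{\widehat{w}}_{\sobfuns{\regcoeff+1}{\widehat{\face}}},
  \qquad \regcoeff=\min\monosetc{s,k}.
\end{equation*}
Pulling back along \(T_\face\) and tracking the affine scaling of the \(L^2\)-norm (a factor \(h_\face\)) and of the reference seminorm \(\singlenorm{\widehat{w}}_{\sobfuns{\regcoeff+1}{\widehat{\face}}}\) (which scales like \(h_\face^{\regcoeff}\singlenorm{w}_{\sobfuns{\regcoeff+1}{\face}}\)), together with the uniform shape regularity, produces precisely the factor \(h_\face^{\regcoeff+1}\) and hence \cref{eq:NodalFaceApprox}.

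I do not expect a genuine obstacle: the whole statement is a transcription of \cite[Thm.~11.13]{Ern.G.2021.FEAI} to the interface mesh, and its only nontrivial inputs---admissibility of the face finite element and shape regularity of \(\Finterface\)---are already provided by \cref{ass:FaceUnisolvence,ass:FaceMatching} and by the inscribed-ball argument of \cite{Gan.L.1996} recalled just before the lemma. The single point deserving care is the sharp use of \(s>0\): it is exactly the threshold making \(\sobfuns{1+s}{\face}\) embed into continuous functions on a planar face, so that nodal interpolation is meaningful (matching the hypothesis \(\kappa>1\) in the definition of \(\FInterp_\face\)); for \(s=0\) the operator would not even be defined.
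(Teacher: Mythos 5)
Your proposal is correct and follows exactly the paper's route: the paper proves this lemma simply by citing \cite[Thm.~11.13]{Ern.G.2021.FEAI}, having prepared the two inputs you identify---the face finite element guaranteed by \cref{ass:FaceUnisolvence,ass:FaceMatching} via \cite[Lem.~20.2]{Ern.G.2021.FEAI}, and the shape regularity of \(\Finterface\) inherited through \(h_\face \leq h_\element\) and \(\rho_\element \leq \rho_\face\) from \cite[Thm.~10,~(10)]{Gan.L.1996}. Your additional unpacking of the Bramble--Hilbert and affine-scaling mechanics, and of the sharpness of the threshold \(s>0\) for the embedding \(\sobfuns{1+s}{\face}\hookrightarrow\confuns{0}{\overline{\face}}\) on two-dimensional faces, is accurate detail that the paper leaves inside the citation.
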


Similar to \cref{lem:ProjectionUnderDGCurl}, we obtain an approximation result under the discrete lift operator.
\begin{lemma}\label{lem:LiftOpInterpolationEstimate}
    Let \(\vvV\in\sobfuns{1+s}{\Finterface}^3\) with \(s>0\).
    Under \Cref{ass:interface_aligned}, it holds
    \begin{equation*}
        \normeps{\LiftOp\bigl(\vvV - \FInterp \vvV\bigr)} \leq C h^{\regcoeff+1/2} \singlenorm{\vvV}_{\sobfuns{\regcoeff + 1}{\Finterface}^3}
    \end{equation*}
    with a constant \(C > 0\) which is independent of \(h\) and \(\vvV\).
    Here, \(\regcoeff = \min\monosetc{s,k}\) and the polynomial degree of approximation space \cref{eq:BrokenPolySpace} is denoted with \(k\).
\end{lemma}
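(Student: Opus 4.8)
The plan is to combine a duality-type testing argument for the variationally-defined lift operator with a discrete trace inequality and the nodal interpolation bound of \cref{lem:NodalFaceApprox}, closely paralleling the strategy behind \cref{lem:ProjectionUnderDGCurl} and the half-order loss seen there. Write \(\vvW = \vvV - \FInterp\vvV\) for brevity. Since \(\LiftOp\vvW\in\Vapprox\), I would start from \(\normeps{\LiftOp\vvW}^2 = \sprodeps{\LiftOp\vvW}{\LiftOp\vvW}\) and test the defining relation \cref{eq:LiftOp} against \(\TeFunH = \LiftOp\vvW\) itself, obtaining
\[
  \normeps{\LiftOp\vvW}^2
  = -\sum_{\face\in\Finterface} \sprod{\restrict{\vvW}{\face}}{\avg{\LiftOp\vvW}^{\overline{\mu c}}_\face}_\face.
\]
Applying the Cauchy--Schwarz inequality face-by-face then bounds the right-hand side by \(\sum_{\face\in\Finterface} \norm{\vvW}_{\lebfuns{2}{\face}^3} \norm{\avg{\LiftOp\vvW}^{\overline{\mu c}}_\face}_{\lebfuns{2}{\face}^3}\).

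The next step controls the discrete average. Since \(\avg{\LiftOp\vvW}^{\overline{\mu c}}_\face\) is a convex combination of the two polynomial traces, its \(\lebfuns{2}{\face}\)-norm is dominated by the sum of the traces from \(\elementKFl\) and \(\elementKFr\); a standard discrete trace (inverse) inequality for polynomials on shape-regular elements (see, \eg{}, \cite{Ern.G.2021.FEAI}) then yields \(\norm{\avg{\LiftOp\vvW}^{\overline{\mu c}}_\face}_{\lebfuns{2}{\face}^3} \leq C h_\face^{-1/2} \norm{\LiftOp\vvW}_{\lebfuns{2}{\elementKFl\cup\elementKFr}^3}\). I would then apply Cauchy--Schwarz over the faces, absorbing the factor \(h_\face^{-1/2}\) into the term carrying \(\vvW\). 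Because shape regularity bounds the number of faces adjacent to any fixed element, the face-patch contributions sum with finite overlap to \(C\norm{\LiftOp\vvW}_{\lebfuns{2}{\domain}^3}^2 \leq C\normeps{\LiftOp\vvW}^2\), using the norm equivalence noted after \cref{eq:weightedL2}. Dividing through by \(\normeps{\LiftOp\vvW}\) leaves
\[
  \normeps{\LiftOp\vvW}
  \leq C \Bigl(\sum_{\face\in\Finterface} h_\face^{-1} \norm{\vvW}_{\lebfuns{2}{\face}^3}^2\Bigr)^{1/2}.
\]

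Finally I would invoke the face interpolation estimate \cref{eq:NodalFaceApprox} componentwise, \(\norm{\vvW}_{\lebfuns{2}{\face}^3} \leq \CFInterp h_\face^{\regcoeff+1} \singlenorm{\vvV}_{\sobfuns{\regcoeff+1}{\face}^3}\), so that each summand becomes \(h_\face^{-1}\,h_\face^{2(\regcoeff+1)}\singlenorm{\vvV}_{\sobfuns{\regcoeff+1}{\face}^3}^2 = h_\face^{2\regcoeff+1}\singlenorm{\vvV}_{\sobfuns{\regcoeff+1}{\face}^3}^2\). Bounding \(h_\face \leq h\) and summing over \(\face\in\Finterface\) produces \(h^{2\regcoeff+1}\singlenorm{\vvV}_{\sobfuns{\regcoeff+1}{\Finterface}^3}^2\), and taking the square root gives the claimed order \(h^{\regcoeff+1/2}\). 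The one point requiring care — and the reason the bound loses half an order relative to the raw interpolation estimate — is precisely the discrete trace inequality: the \(h_\face^{-1/2}\) it produces is intrinsic to a variationally-defined lift and exactly consumes half of the interpolation gain. \Cref{ass:interface_aligned} enters only to ensure that the faces in \(\Finterface\) are genuine mesh faces with well-defined neighboring elements \(\elementKFl,\elementKFr\), so that the trace inequality and the patch-overlap counting apply; the hypothesis \(s>0\) guarantees \(\vvV\) admits the classical face traces needed for \(\FInterp\).
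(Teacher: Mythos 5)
Your proposal is correct and follows essentially the same route as the paper's proof: a weighted Cauchy--Schwarz estimate on the defining sum of \(\LiftOp\), the face interpolation bound of \cref{lem:NodalFaceApprox}, and the discrete trace inequality \cite[Lem.~12.8]{Ern.G.2021.FEAI} with finite overlap of the face patches, producing exactly the half-order loss \(h^{\regcoeff+1/2}\). The only cosmetic deviations are that you test with \(\LiftOp(\vvV-\FInterp\vvV)\) itself rather than an arbitrary \(\TeFunH\in\Vapprox\), and that you distribute the mesh weight as \(h_\face^{-1/2}\) per face instead of the paper's \(\omega_\face=\min\{h_{\elementKFl},h_{\elementKFr}\}\); both choices are equivalent here since \(h_\face\leq h_{\elementKFl},h_{\elementKFr}\).
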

\begin{proof}
    Let \(\TeFunH\in\Vapprox\).
    By the definition of the discrete lift operator \cref{eq:LiftOp} and the Cauchy-Schwarz inequality it holds
    \begin{equation}\label{eq:LiftEstimateCS}
        \singlenorm{\sprodeps{\LiftOp\bigl(\vvV-\FInterp\vvV\bigr)}{\TeFunH}}
        \leq 
        \begin{multlined}[t]
            \biggl(\sum_{\face\in\Finterface} \omega_\face^{-1} \norm{\vvV-\FInterp\vvV}_{\lebfuns{2}{\face}^3}^2\biggr)^{1/2}\\
            \cdot \biggl(\sum_{\face\in\Finterface} \omega_\face \norm{\avg{\TeFunH}^{\varepsilon\sol}}_{\lebfuns{2}{\face}^3}^2\biggr)^{1/2}
        \end{multlined}
    \end{equation}
    with the weight \(\omega_\face = \min\monosetc{h_{\elementKFl},h_{\elementKFr}}\).
    By \cref{lem:NodalFaceApprox}, we obtain the estimate
    \begin{equation}\label{eq:LiftEstimateV}
        \omega_\face^{-1} \norm{\vvV-\FInterp\vvV}_{\lebfuns{2}{\face}^3}^2 
        \leq C^2 \omega_\face^{-1} h_\face^{2\regcoeff + 2} \singlenorm{\vvV}_{\sobfuns{\regcoeff + 1}{\face}^3}^2
    \end{equation}
    Since by definition \(h_\face \leq \max\monosetc{h_{\elementKFl}, h_{\elementKFr}}\), we obtain with the shape regularity that
    \begin{equation}\label{eq:ShapeRegularityUsed}
      h_\face \leq \max\monosetc{h_{\elementKFl}, h_{\elementKFr}} 
      \leq \sigma \rho_F 
      \leq \sigma \min\monosetc{h_{\elementKFl}, h_{\elementKFr}}
      \leq \sigma \omega_\face.
    \end{equation}
    Therefore, we lose one \(h_F\) in \cref{eq:LiftEstimateV} due to the weight \(\omega_F\) and end up with the estimate
    \begin{equation*}
      \omega_\face^{-1} \norm{\vvV-\FInterp\vvV}_{\lebfuns{2}{\face}^3}^2
      \leq C^2 \sigma h_\face^{2\regcoeff + 1} \singlenorm{\vvV}_{\sobfuns{\regcoeff + 1}{\face}^3}^2
    \end{equation*} 
    With the discrete trace inequality \cite[Lem.~12.8]{Ern.G.2021.FEAI}, we further estimate
    \begin{equation}\label{eq:LiftEstimatePhi}
        \norm{\avg{\TeFunH}^{\varepsilon\sol}}_{\lebfuns{2}{\face}^3}^2
        \leq 2 C^2 \solmax^2 \Msusmax  \bigl(
            h_{\elementKFl}^{-1} \normeps{\restrict{\TeFunH}{\elementKFl}}[\elementKFl]^2
            + h_{\elementKFr}^{-1} \normeps{\restrict{\TeFunH}{\elementKFr}}[\elementKFr]^2
        \bigr)
    \end{equation}
    with a constant \(C>0\) which is independent of \(\face,\elementKFl\) and \(\elementKFr\),
    but depends on the polynomial degree \(k\).
    
    Multiplication of \cref{eq:LiftEstimatePhi} with \(\omega_\face\) proves the statement together with \cref{eq:LiftEstimateV} and \cref{eq:LiftEstimateCS}.
\end{proof}

With \cref{lem:LiftOpInterpolationEstimate}, we have all ingredients to prove the second main result of this section.
\begin{proof}[Proof of \cref{thm:InterpolationComparison}]\label{prf:InterpolationComparison}
    Since \(\vvH \in \confuns{0}{[0,T], \pVastH\cap \sobfuns{1+s}{\meshh}^3}\) with \(s > 1/2\), we conclude by \cite[Thm.~3.10]{Ern.G.2021.FEAI} that 
    \begin{equation}
        \Jsurf = \jmp{\vvH\times \nint}_\Fint \in  \confuns{0}{[0,T], \sobfuns{1 + \kappa}{\Finterface}^3}
    \end{equation}
    with \(\kappa = s - 1/2 > 0\).

    We write \cref{eq:InterpolateSemiDiscreteMaxwell} in vector form, \ie{}, \(\vcu_h(t) = \bigl(\vcH_h(t),\vcE_h(t)\bigr)\) such that
    \begin{equation}\label{eq:InterpolateSemiDiscreteMaxwellVector}
        \dt \vcu_h(t) = \pDgMOp \vcu_h(t) + \jvolh(t) + \jsurfhInterp(t),\ \text{for } t\in [0,T],\hspace*{1em} \vcu_h(0) = \vvu_h^0,
    \end{equation}
    with \(\jvolh = \bigl(0, -\Jvolh\bigr)\), \(\jsurfhInterp = \bigl(0,-\JsurfhInterp\bigr)\) and \(\vcu_h^0 = \bigl(\vcH_h^0,\vcE_h^0\bigr)\).

    Writing \(\vce_h(t) = \vvu_h(t) - \vcu_h(t)\) and subtracting \cref{eq:SemiDiscreteMaxwell,eq:InterpolateSemiDiscreteMaxwellVector}, we obtain
    \begin{equation}\label{eq:DefectEqComparison}
        \dt \vce_h(t) = \pDgMOp \vce_h(t) + \vcd_h(t),\ \text{for } t\in [0,T],\hspace*{1em} \vce_h(0) = 0,
    \end{equation}
    with a defect \(\vcd_h(t) = \jsurfh(t) - \jsurfhInterp(t)\).
    
    We can write the solution of \cref{eq:DefectEqComparison} with the variations-of-constants formula and obtain with \Cref{lem:LiftOpInterpolationEstimate} the estimate
    \begin{equation*}
        \normmueps{\vce_h(t)} 
        \leq C h^{\min\monosetc{\kappa,k} + 1/2} \int_0^t\singlenorm{\Jsurf(s)}_{\sobfuns{1+\kappa}{\Finterface}^3}\ \mathrm{d}s.
    \end{equation*}
    This proves the claim since \(\kappa+1/2 = s\).
\end{proof}

\section{Full discretization}\label{sec:FullDiscretization}

In time, we discretize \cref{eq:SemiDiscreteMaxwell}  with the explicit \leapfrog{} scheme with \stepsize{} \(\tau > 0\)  and set \(t_n = n\tau\) for \(n\in\bbN\).
The fully discrete scheme reads
\begin{subequations}\label{eq:LeapfrogScheme}
    \begin{align}
        \Hfh[n+1/2] - \Hfh[n] &= -\frac{\tau}{2}\DgMOpE \Efh[n],\\
        \Efh[n+1] - \Efh[n] &= \tau\pDgMOpH \Hfh[n+1/2]
        - \frac{\tau}{2}(\Jvolh[n] + \Jvolh[n+1])
        - \frac{\tau}{2}(\Jsurfh[n] + \Jsurfh[n+1]),\\
        \Hfh[n+1] - \Hfh[n+1/2] &= -\frac{\tau}{2} \DgMOpE \Efh[n+1],
    \end{align}
\end{subequations}
for \(n\geq 0\) and \(\Hfh[0] = \OProj \Hf[0]\), \(\Efh[0] = \OProj \Ef[0]\).
It is well-known that the leapfrog scheme is stable if for some \(\thetaCFL \in (0,1)\),
the CFL condition 
\begin{equation} \label{def:CFLCondition}
	\tau < \tauCFL = \frac{2\thetaCFL}{\normeps{\pDgMOpH\DgMOpE}} \lesssim \thetaCFL \min_{\element\in\meshh} h_\element 
\end{equation}
is satisfied.
Here, $\normeps{\cdot}$ denotes the induced operator norm, cf.\ \eqref{eq:weightedL2}.
For more details on the constant within the CFL condition, we refer to \cite[eq.~(2.35)]{Hoc.S.2016}.

The main result of this section is the following bound on the full discretization error.
\begin{theorem}\label{thm:FullDiscretization}
	Let \Cref{ass:interface_aligned} hold and further 
	let the solution \(\vvu=\bigl(\vvH,\vvE\bigr)\) of \cref{eq:MaxwellFormulation} satisfy
    \begin{equation}\label{eq:FullRegularity}
      \vvu \in \confuns{0}{[0,T], \pVast \cap \sobfuns{1+s}{\meshh}^6} \cap \confuns{3}{[0,T], \lebfuns{2}{\domain}^6},
    \end{equation}
    with \(s\geq 0\) and assume that the CFL condition \cref{def:CFLCondition} holds.
    Then,  the approximations $\ufh[n] = \bigl(\Hfh[n],\Efh[n]\bigr)$ defined in \eqref{eq:LeapfrogScheme}
    with approximation space \eqref{eq:BrokenandVapprox} satisfies 
    \begin{equation*}
      \normmueps{\uf(t_n) - \ufh[n]} \leq C(h^{\regcoeff} + \tau^2),
      \qquad 0\leq t_n \leq T.
    \end{equation*}
    Here, \(\regcoeff=\min\monosetc{s,k}\) and  \(C>0\) is a constant which is independent of \(h\) and \(\tau\).
\end{theorem}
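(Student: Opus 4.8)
The plan is to separate the spatial and temporal contributions by the triangle inequality
\[
  \normmueps{\uf(t_n) - \ufh[n]} \le \normmueps{\uf(t_n) - \ufh[](t_n)} + \normmueps{\ufh[](t_n) - \ufh[n]},
\]
where \(\ufh[](t_n)\) denotes the \semidiscrete{} solution of \cref{eq:SemiDiscreteMaxwell} evaluated at \(t_n\). The first term is bounded by \(Ch^{\regcoeff}\) directly from \cref{thm:SpatialConvergence}, so the entire task reduces to estimating the temporal error \(\vvtheta^n := \ufh[](t_n) - \ufh[n]\) by \(C\tau^2\) with a constant independent of \(h\).

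First I would record the stability of the \leapfrog{} recursion. Since \(\DisMOp\) is skew-adjoint by \cref{lem:SkewAdjointMOp}, eliminating the half-step \(\Hfh[n+1/2]\) turns \cref{eq:LeapfrogScheme} into a one-step map whose homogeneous part conserves a discrete energy that, under the CFL condition \cref{def:CFLCondition} with \(\thetaCFL \in (0,1)\), is equivalent to \(\normmueps{\cdot}^2\) uniformly in \(h\); this yields a power-bounded propagator in a norm comparable to \(\normmueps{\cdot}\). I would then insert the smooth \semidiscrete{} solution \(\ufh[](t_n)\) into \cref{eq:LeapfrogScheme} to obtain an error recursion \(\vvtheta^{n+1} = (\text{leapfrog propagator})\,\vvtheta^n + \tau\vvd^n\), where \(\vvd^n\) is the local defect. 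A Taylor expansion with integral remainder shows that the centered differences in \cref{eq:LeapfrogScheme} reproduce \(\dt\Hfh[]\) and \(\dt\Efh[]\) up to a defect involving \(\partial_t^3\Hfh[]\) and \(\partial_t^3\Efh[]\), while the trapezoidal averaging \(\tfrac{\tau}{2}(\Jvolh[n]+\Jvolh[n+1])\) and \(\tfrac{\tau}{2}(\Jsurfh[n]+\Jsurfh[n+1])\) is second-order accurate provided the sources are twice continuously differentiable in time. This is guaranteed by the hypotheses inherited from \cref{thm:UniqueExistence} together with the boundedness of \(\LiftOp\); since the interface data enter through \(\Jsurfh[] = \LiftOp\Jsurf\), the source part of \(\vvd^n\) is controlled in \(\normmueps{\cdot}\) exactly as in the proof of \cref{thm:DiscreteStability}.

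The main obstacle is the \(h\)-independence of the \(\tau^2\) bound. The third time derivatives produced by the truncation satisfy \(\partial_t^3\ufh[] \sim \DisMOp^3\ufh[] + (\text{lower order})\), and since \(\normmueps{\DisMOp} \sim h^{-1}\), a naive pointwise bound of \(\vvd^n\) would scale like \(\tau^2 h^{-3}\), which is useless even under the CFL coupling \(\tau \lesssim h\). The resolution, following \cite{Hoc.S.2016}, is \emph{not} to estimate \(\vvd^n\) pointwise but to keep the offending factor in operator form: one writes the dangerous contributions as \(\DisMOp\) applied to quantities that are \(O(\tau^2)\) in \(\normmueps{\cdot}\), and then carries out the discrete variation-of-constants sum by Abel summation (discrete integration by parts). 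This transfers one power of \(\tau\DisMOp\) onto consecutive differences of the discrete propagator and exploits the CFL bound \(\normmueps{\tau\DisMOp} \le 2\thetaCFL < 2\), so that every application of the operator is absorbed at the cost of a harmless constant rather than a factor \(h^{-1}\). The residual commutator terms of the form \(\DisMOp(I-\OProj)\) are handled precisely as in \cref{lem:ProjectionUnderDGCurl}. Finally I would accumulate the \(n \le T/\tau\) defects: by the power-boundedness of the propagator in the energy norm they sum to \(\normmueps{\vvtheta^n} \le C\tau^2\) with \(C\) independent of \(h\), and combining this with the spatial estimate yields \(\normmueps{\uf(t_n)-\ufh[n]} \le C(h^{\regcoeff}+\tau^2)\) as claimed.
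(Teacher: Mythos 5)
Your reduction through the \semidiscrete{} solution is where the argument breaks. To conclude, you need \(\normmueps{\vvu_h(t_n) - \ufh[n]} \leq C\tau^2\) with \(C\) independent of \(h\), and your defect --- the trapezoidal remainder of the \semidiscrete{} flow plus the \leapfrog{} perturbation --- is governed by \(\dt^3\vvu_h\). Under the assumed regularity \cref{eq:FullRegularity} this quantity is \emph{not} \(h\)-uniformly bounded: the hypothesis gives \(\dt^k\vvu \in \lebfuns{2}{\domain}^6\) only (\(k\leq 3\)) and spatial regularity \(\pVast\cap\sobfuns{1+s}{\meshh}^6\) only for \(\vvu\) itself, while differentiating the \semidiscrete{} equation yields \(\dt^3\vvu_h = \pDgMOp\,\dt^2\vvu_h + \dt^2\vvj_h\), so every attempt to control the right-hand side re-introduces a factor \(\pDgMOp\) of norm \(\sim h^{-1}\), or else requires \(\pVast\cap\sobfuns{1+s}{\meshh}\)-regularity of \(\dt\vvu,\dt^2\vvu\), which is not assumed (comparing \(\dt^k\vvu_h\) with \(\OProj\dt^k\vvu\) runs into the same obstruction, since the defect of the differentiated problem is \(\pDgMOp(I-\OProj)\dt^k\vvu\)). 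Your proposed rescue --- keep the operators in operator form and absorb them by Abel summation under the CFL bound --- absorbs exactly \emph{one} factor of \(\tau\pDgMOp\) per summation by parts, while each summation produces time differences of the defect whose estimation again needs the very derivative bounds in question; the bookkeeping is circular. Moreover, the ``residual commutator terms \(\DisMOp(I-\OProj)\)'' you invoke cannot arise in your comparison at all: both \(\vvu_h(t)\) and \(\ufh[n]\) lie in \(\Vapprox^2\), where \(I-\OProj\) vanishes. Such terms appear only when the exact solution enters the recursion --- a sign that your scheme of proof implicitly relies on resources your decomposition has discarded.

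This is precisely why the paper does not route through the \semidiscrete{} solution. It compares \(\ufh[n]\) directly with the projected exact solution, \(\vve_h^n = \OProj\uf(t_n) - \ufh[n]\), inserts \(\OProj\uf\) into the one-step form \cref{eq:LeapfrogSchemeOneStep}, and uses the consistency relation of \cref{lem:Consistency} so that the lifted surface currents cancel exactly and the defect (\cref{lem:Defect}) splits into three harmless pieces: spatial parts \(\pDgMOp(I-\OProj)\uf\) and \(\DgLFPert(I-\OProj)\uf\), bounded by \(Ch^{\regcoeff}\) via \cref{lem:ProjectionUnderDGCurl} using the assumed \(\sobfuns{1+s}{\meshh}\)-regularity of \(\uf\); a trapezoidal remainder involving \(\dt^3\uf\) measured in plain \(\lebfuns{2}{\domain}\); and exactly \emph{one} unbounded factor, isolated in the form \(\bigl(\Rhl-\Rhr\bigr)\vvd_h^n\) with \(\vvd_h^n\) built from \(\OProj\bigl(\dt\Hf[n+1]-\dt\Hf[n]\bigr)\), which a single summation by parts removes. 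The identity producing this structure (the rewriting of \(\frac{\tau^2}{4}\DgLFPert\OProj(\uf[n+1]-\uf[n])\) as in \cite[eq.~(5.21)]{Hoc.S.2016}) uses \(\dt\Hf = -\MOpE\Ef\) together with the consistency \(\DgMOpE\Ef = \OProj\MOpE\Ef\), i.e., the exact solution's spatial regularity --- unavailable in your \(\Vapprox\)-internal comparison. Your argument can be repaired either by adopting this direct comparison, or by strengthening \cref{eq:FullRegularity} so that the time derivatives themselves carry spatial regularity (e.g., \(\vvu\in\confuns{2}{[0,T],\pVast\cap\sobfuns{1+s}{\meshh}^6}\)), which the theorem as stated does not grant.
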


\begin{remark}\label{rmk:OnTheCFLCondition}
   Note that the interface condition does not induce an additional \stepsize{} restriction, since the CFL condition \eqref{def:CFLCondition} coincides with that for problems on the full domain $\domain$.
\end{remark}

\begin{remark}\label{rmk:CompatibiltyCondition}
	We note that in order to prove the regularity assumptions on \(\uf\) in \cref{thm:FullDiscretization} certain compatibility conditions have to be satisfied at the initial time.
    Assuming that the solution is sufficiently smooth, we obtain from \cref{eq:InterfaceCond:Tangential}
    \begin{align*}
        \dt\Jsurf 
            &= \jmp{\dt\Hf\times\nint}_\Fint 
                = -\jmp{\Msus[{-1}]\curl\Ef\times\nint}_\Fint,\\
        \dt^2\Jsurf
            &= -\jmp{\Msus[{-1}]\curl\dt\Ef\times\nint}_\Fint
                = -\jmp{\Msus[{-1}]\curl\Eperm[{-1}]\curl\Hf\times\nint}_\Fint.
    \end{align*}
    The reader should refer to \cite[Thm.~2.4-2.6]{Dor.Z.2023} for a thorough treatment.
\end{remark}

Our analysis is inspired by \cite{Hoc.S.2016}, where the locally implicit method for linear Maxwell equations is considered.  
With the discrete Maxwell operators from \eqref{eq:DiscreteMaxwellOperators} and  $\ufh[n] = \bigl(\Hfh[n],\Efh[n]\bigr)$, we write \cref{eq:LeapfrogScheme} in the following one-step formulation
\begin{subequations}  \label{eq:LeapfrogAll}
\begin{equation}\label{eq:LeapfrogSchemeOneStep}
	\Rhl\ufh[n+1] = \Rhr\ufh[n] 
	+ \frac{\tau}{2}\bigl(\jvolh[n+1] + \jvolh[n]\bigr) 
	+ \frac{\tau}{2}\bigl(\jsurfh[n+1] + \jsurfh[n]\bigr)
\end{equation}
for \(n\geq 0\) with operators
\begin{equation}\label{def:PropagationOp}
	\Rhlr \defpnt \Vapprox^2 \to \Vapprox^2,
	\quad \Rhlr = I \pm \frac{\tau}{2}\pDgMOp - \frac{\tau^2}{4}\DgLFPert
\end{equation}
and perturbation operator
\begin{equation}\label{def:PerturbationOp}
	\DgLFPert \defpnt \Vapprox^2 \to \Vapprox^2,
	\quad \DgLFPert = \begin{pmatrix}
		0 & 0\\
		0 & \pDgMOpH\DgMOpE
	\end{pmatrix}.
\end{equation}
\end{subequations}
For \(\DgLFPert =0\), the scheme \eqref{eq:LeapfrogAll} is equivalent to the \CrankNicolson{} method and thus, one can interpret the leapfrog scheme as perturbation of it.
We further use the operator \(\Rh \defpnt \Vapprox^2\to\Vapprox^2\), defined as \(\Rh = \Rhl^{-1}\Rhr\).

Note that for the special choice of $\bm{\mathcal{C}_H}^i=0$ and $\bm{\mathcal{C}_E}^i=0$ in \cite[eq.~(2.34)]{Hoc.S.2016}, one obtains the leapfrog scheme on the whole spatial domain. Thus, we can use bounds on the operators in \eqref{eq:LeapfrogAll} from that work.

\subsection*{Stability}
The following theorem provides stability for the fully discrete scheme and is a discrete analogue of the bound provided in \cref{thm:UniqueExistence} for the exact solution.

\begin{theorem}\label{thm:FullyDiscreteStability}
    Assume that the CFL condition \cref{def:CFLCondition}, \Cref{ass:interface_aligned}, and the assumptions of \cref{thm:FullDiscretization} are satisfied.
    Then, the fully discrete scheme \eqref{eq:LeapfrogScheme} is stable, \ie{}, for all \(n\geq 0\) it holds
    \begin{align*}
        \normmueps{\ufh[n]}
          &\lesssim \normmueps{\uf[0]} 
          + \norm{\Jsurf(t_0)}_{\lebfuns{2}{\Fint}^3}
          + \norm{\Jsurf(t_n)}_{\lebfuns{2}{\Fint}^3}\\
          &\quad + \frac{\tau}{2}\sum_{\ell=0}^{n-1} \norm{\Jvol(t_{\ell+1}) + \Jvol(t_\ell)}_{\lebfuns{2}{\domain}^3}\\
          &\quad + \int_{t_0}^{t_n} \norm{\dt\Jsurf(s)}_{\TraceSpaceParallel}\ds\\
          &\quad + \frac{\tau}{2}\sum_{\ell=0}^{n-1}
          \begin{aligned}[t]
            &\Big\|\Jsurf(t_{{\ell+1}}) + \Jsurf(t_{\ell})\Big\|_{\Hparallelspace},
          \end{aligned}
      \end{align*}
      with a constant which is independent of \(h\), \(\tau\) and \(\uf[]\).
\end{theorem}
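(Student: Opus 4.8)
The plan is to adapt the proof of \cref{thm:DiscreteStability} to the fully discrete setting, replacing the unitary \scsemigroup{} generated by \(\DisMOp\) by the leapfrog propagator \(\Rh = \Rhl^{-1}\Rhr\) and working with the one-step formulation \cref{eq:LeapfrogSchemeOneStep}. As before, I introduce the shifted iterates \(\vtu_h^n = \ufh[n] - \OProj\jextH(t_n)\), where \(\jextH(t) = \bigl(\JextH(t),0\bigr)\) and \(\JextH\) is the extension from \cref{thm:SurfaceChargeExtension}. The aim is to derive a perturbed leapfrog recursion for \(\vtu_h^n\) whose defect no longer carries the surface current, and then to propagate this defect.

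Substituting \(\ufh[n] = \vtu_h^n + \OProj\jextH(t_n)\) into \cref{eq:LeapfrogSchemeOneStep} gives \(\Rhl\vtu_h^{n+1} = \Rhr\vtu_h^n + \vtr_h^n\), with
\(\vtr_h^n = \Rhr\OProj\jextH(t_n) - \Rhl\OProj\jextH(t_{n+1}) + \tfrac{\tau}{2}\bigl(\jvolh[n+1] + \jvolh[n] + \jsurfh[n+1] + \jsurfh[n]\bigr)\). I expand \(\Rhlr = \Ident \pm \tfrac{\tau}{2}\pDgMOp - \tfrac{\tau^2}{4}\DgLFPert\) from \cref{def:PropagationOp}. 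The second difference carrying \(\DgLFPert\) vanishes, because \(\OProj\jextH\) has zero electric component while \(\DgLFPert\) from \cref{def:PerturbationOp} acts only on that component. For the \(\tfrac{\tau}{2}\pDgMOp\)-contribution I invoke the non-consistency identity \cref{lem:Consistency}~\labelcref{lem:Consistency:NonConsisten} exactly as in the proof of \cref{thm:DiscreteStability}, namely \(\pDgMOp\OProj\jextH(t) = -\pDgMOp(\Ident-\OProj)\jextH(t) + \OProj\pMOp\jextH(t) - \jsurfh(t)\). The resulting \(-\jsurfh\) terms cancel the \(+\jsurfh\) terms inherited from the leapfrog right-hand side, leaving
\begin{equation*}
  \begin{aligned}
    \vtr_h^n ={}& \OProj\bigl(\jextH(t_n) - \jextH(t_{n+1})\bigr)
      - \tfrac{\tau}{2}\pDgMOp(\Ident - \OProj)\bigl(\jextH(t_n) + \jextH(t_{n+1})\bigr)\\
      &+ \tfrac{\tau}{2}\OProj\bigl(\pMOp\jextH(t_n) + \pMOp\jextH(t_{n+1}) + \jvol(t_n) + \jvol(t_{n+1})\bigr) \in \Vapprox^2.
  \end{aligned}
\end{equation*}

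Iterating the recursion yields the discrete variation-of-constants formula \(\vtu_h^n = \Rh^n\vtu_h^0 + \sum_{\ell=0}^{n-1}\Rh^{\,n-1-\ell}\Rhl^{-1}\vtr_h^\ell\). Under the CFL condition \cref{def:CFLCondition}, the operator bounds of \cite{Hoc.S.2016} (the special case of vanishing material operators noted after \cref{eq:LeapfrogAll}) furnish a norm on \(\Vapprox^2\), equivalent to \(\normmueps{\cdot}\) uniformly in \(h\) and \(\tau\), in which \(\Rh\) is power-bounded and \(\Rhl^{-1}\) is bounded; hence \(\normmueps{\vtu_h^n} \lesssim \normmueps{\vtu_h^0} + \sum_{\ell=0}^{n-1}\normmueps{\vtr_h^\ell}\). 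It remains to bound the defect, which mirrors the \semidiscrete{} case. Since \(\OProj\) is a contraction in \(\normmueps{\cdot}\) (the weights are \piecewise{} constant), the first term obeys \(\normmueps{\OProj(\jextH(t_\ell)-\jextH(t_{\ell+1}))} \le \int_{t_\ell}^{t_{\ell+1}}\normmueps{\dt\jextH(s)}\,\ds \lesssim \int_{t_\ell}^{t_{\ell+1}}\norm{\dt\Jsurf(s)}_{\lebfuns{2}{\Fint}^3}\,\ds\) by \cref{thm:SurfaceChargeExtension}, and summation gives the \(\int_{t_0}^{t_n}\norm{\dt\Jsurf}\) term. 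The consistency term is controlled by \cref{lem:ProjectionUnderDGCurl} with \(s=0\) (admissible since \(\JextH(t)\in\psobfuns{1}^3\)) and then by the third estimate of \cref{thm:SurfaceChargeExtension}; likewise \(\tfrac{\tau}{2}\OProj\pMOp\jextH = \tfrac{\tau}{2}\OProj(0,\Eperm[-1]\curl\JextH)\) is bounded by \(\tfrac{\tau}{2}\singlenorm{\JextH}_{\psobfuns{1}^3}\) and again by \cref{thm:SurfaceChargeExtension}, so both yield the \(\tfrac{\tau}{2}\sum\norm{(\JsurfTwo,\JsurfThree)}_{\LaplaceThree^{1/4}\times\LaplaceTwo^{1/4}}\) term, while the volume part produces \(\tfrac{\tau}{2}\sum\norm{\Jvol(t_{\ell+1})+\Jvol(t_\ell)}_{\lebfuns{2}{\domain}^3}\). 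Finally, shifting back via \(\normmueps{\ufh[n]}\le\normmueps{\vtu_h^n}+\normmueps{\OProj\jextH(t_n)}\) with \(\normmueps{\OProj\jextH(t_n)}\lesssim\norm{\Jsurf(t_n)}_{\lebfuns{2}{\Fint}^3}\), together with \(\normmueps{\vtu_h^0}\lesssim\normmueps{\uf[0]}+\norm{\Jsurf(t_0)}_{\lebfuns{2}{\Fint}^3}\), produces exactly the asserted bound.

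The main obstacle is the uniform power-boundedness of \(\Rh\) and boundedness of \(\Rhl^{-1}\) under the CFL condition: unlike \(e^{t\DisMOp}\), the leapfrog propagator is not an isometry in \(\normmueps{\cdot}\), so one must pass to the leapfrog energy norm and rely on the estimates of \cite{Hoc.S.2016}. The only other delicate point is the exact cancellation of the surface-current terms in \(\vtr_h^n\), which rests on \cref{lem:Consistency}~\labelcref{lem:Consistency:NonConsisten} and on \(\DgLFPert\) annihilating the purely magnetic shift \(\OProj\jextH\).
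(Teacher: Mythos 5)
Your proposal is correct and follows essentially the same route as the paper: the identical shift \(\ufh[n] - \OProj\jextH(t_n)\), the same cancellation of the lifted surface current via \cref{lem:Consistency}~\labelcref{lem:Consistency:NonConsisten} together with \(\DgLFPert\) annihilating the purely magnetic shift, the same discrete variation-of-constants formula, and the same defect bounds inherited from \cref{thm:DiscreteStability,thm:SurfaceChargeExtension,lem:ProjectionUnderDGCurl}. The only cosmetic deviation is that you argue power-boundedness of \(\Rh\) and boundedness of \(\Rhl^{-1}\) through an equivalent leapfrog energy norm, whereas the paper cites the explicit bounds \cref{eq:BoundsOnall} in \(\normmueps{\cdot}\) directly from \cite{Hoc.S.2016} and \cite{Dor.H.K.Et.2023.WPMA}; both rest on the CFL condition and yield the same conclusion.
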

\begin{proof}
	The proof relies on the same arguments as in \cref{thm:UniqueExistence,thm:DiscreteStability}. Hence, we introduce the shifted field
    \begin{equation}\label{eq:FullDiscretization:FullyShifted}
        \vcu_h^n = \ufh[n] - \OProj \jextH[n],
    \end{equation}
    where \(\jextH[n] = \bigl(\JextH[n],0\bigr)\).
    The shifted variables satisfy the recursion
    \begin{equation}\label{eq:FullDiscretization:FirstDerivation}
        \begin{aligned}
            \Rhl\vcu_h^{n+1}
            &= \Rhr\vcu_h^n
                + \frac{\tau}{2}\bigl(\jvolh[n+1] + \jvolh[n]\bigr) 
                + \frac{\tau}{2}\bigl(\jsurfh[n+1] + \jsurfh[n]\bigr)\\
            &\qquad+ \Rhr \OProj\jextH[n] - \Rhl \OProj \jextH[n+1] .
        \end{aligned}
    \end{equation}

    Next, we study the action of \(\Rhlr\) on \(\OProj\jextH[\ell]\) for \(\ell \geq 0\).
    Since the second component of \(\jextH[\ell]\) is zero, it holds \(\DgLFPert \OProj\jextH[\ell] = 0\).
    Thus,  \cref{def:PropagationOp} yields
    \begin{align*}
        \Rhlr \OProj \jextH[\ell] 
       & = \OProj \jextH[\ell] \pm \frac{\tau}{2}\pDgMOp \OProj \jextH[\ell]\\
       & = \OProj \jextH[\ell] \pm \frac{\tau}{2}\pDgMOp \bigl(\OProj - I\bigr)\jextH[\ell] \pm \frac{\tau}{2}\OProj\pMOp \jextH[\ell] \mp \frac{\tau}{2}\jsurfh[\ell].
    \end{align*}
    Here, the second identity follows from \cref{cor:NewExtensionCoro,lem:Consistency}.
    Inserting this into \cref{eq:FullDiscretization:FirstDerivation} leads to
    \begin{equation}\label{eq:FullDiscretization:FinalDerivation}
        \begin{aligned}
            \Rhl\vcu_h^{n+1}
            &= \Rhr\vcu_h^n +\vcr_h^n
        \end{aligned}
    \end{equation}
    with the remaining terms
    \begin{align*}
        \vcr_h^n
        &= \frac{\tau}{2}\bigl(\jvolh[n+1] + \jvolh[n]\bigr)\\
        &\qquad-\OProj (\jextH[n+1] - \jextH[n])\\
        &\qquad+\frac{\tau}{2}\OProj\pMOp (\jextH[n+1]+\jextH[n])
        +\frac{\tau}{2}\pDgMOp\bigl(\OProj - I\bigr)(\jextH[n+1]+\jextH[n]).
    \end{align*}
    Solving the recursion \cref{eq:FullDiscretization:FinalDerivation}, we obtain
    \begin{equation*}
        \vcu_h^{n} = \Rh^{n}\vcu_h^0 + \sum_{\ell = 0}^{n-1} \Rh^{n-1-\ell}\Rhl^{-1}\vcr_h^\ell.
    \end{equation*}

    By \cite[Lem.~11.14]{Dor.H.K.Et.2023.WPMA} we have
    \begin{subequations} \label{eq:BoundsOnall}
   	 \begin{equation}\label{eq:BoundOnRMinusInverse}
		\normmueps{\Rhl^{-1}} \leq \sqrt{1+\theta^2+\theta^4}
	\end{equation}
	and by \cite[Lem.~4.2]{Hoc.S.2016} it holds
	\begin{equation}\label{eq:BoundOnPowersOfR}
		\normmueps{\Rh^{m}} \leq \Cstab =(1-\thetaCFL^2)^{-1/2}, \quad m=0,1,\ldots .
	\end{equation}
    \end{subequations}
    Together with \cref{eq:FullDiscretization:FullyShifted} we infer
    \begin{equation*}
        \normmueps{\ufh[n]} 
        \leq \Cstab \Bigl(\normmueps{\ufh[0]}
        +  \normmueps{\OProj \jextH[0]}
        + \sqrt{3} \sum_{\ell=0}^{n-1} \normmueps{\vcr_h^\ell}\Bigr)
        + \normmueps{\OProj \jextH[n]}
        .
    \end{equation*}
    The remainders $\normmueps{\vcr_h^\ell}$ are bounded in the same way as in the proof of \cref{thm:DiscreteStability}.
\end{proof}

\subsection*{Error analysis}

As usual, we split the full discretization error into
\begin{equation}\label{eq:FullDisErrorSplit}
    \uf[n] - \ufh[n] = \uf[n] - \OProj \uf[n] + \OProj \uf[n]- \ufh[n] = \vve_\Pi^n + \vve_h^n.
\end{equation}
Here, \(\vve_\Pi^n = \uf[n] - \OProj \uf[n]\) is the best approximation error  and \(\vve_h^n = \OProj \uf[n]- \ufh[n]\) is the dG-\leapfrog-error  at time \(t_n\).
Since the best approximation error is covered by projection results, \cf{} \cref{lem:BrokenOrthApprox}, we determine the defect \(\vvd^n\) by inserting the projected exact solution into the scheme \cref{eq:LeapfrogSchemeOneStep}, \ie, 
\begin{equation}\label{eq:ProjectExactSolutionInMethod}
    \Rhl\OProj\uf[n+1] = \Rhr\OProj\uf[n] 
        + \frac{\tau}{2}\bigl(\jvolh[n+1] + \jvolh[n]\bigr) 
        + \frac{\tau}{2}\bigl(\jsurfh[n+1] + \jsurfh[n]\bigr)
        - \vvd^n.
\end{equation}
This allows us to infer the error recursion for the dG-leapfrog scheme.

\begin{lemma}\label{lem:Defect}
	Let the assumptions of \cref{thm:FullDiscretization} be satisfied. 
    Then, the dG-leapfrog-error \(\vve_h^n\) defined in \cref{eq:FullDisErrorSplit} satisfies the error recursion
    \begin{subequations} \label{eq:dg-lf-error-all}
    \begin{equation}  \label{eq:dg-lf-error-a}
        \Rhl\vve_h^{n+1} = \Rhr\vve_h^{n} + \vvd^n, 
        \qquad 
        \vvd^n = \vvd_{\Pi}^n + \vvdelta^n +  \bigl(\Rhl - \Rhr\bigr)\vvd_h^n
    \end{equation}
    with
        \begin{align} \label{eq:dg-lf-error-b}
            \vvd_\Pi^n &= 
                -\frac{\tau}{2} \pDgMOp\bigl(I-\OProj\bigr)(\uf[n+1] + \uf[n])
                -\frac{\tau^2}{4} \DgLFPert\bigl(I-\OProj\bigr)(\uf[n+1] - \uf[n]),\\
                \label{eq:dg-lf-error-c}
            \trapedefect[n] &= 
                \tau^2\OProj\int_{t_n}^{t_{n+1}}\frac{(s-t_n)(t_{n+1}-s)}{2\tau^2}\dt^3 \uf(s)\ds,\\
                \label{eq:dg-lf-error-d}
            \vvd_h^n &= 
                -\frac{\tau}{4}\begin{pmatrix}\OProj \bigl(\dt\Hf[n+1] - \dt\Hf[n]\bigr)\\0\end{pmatrix}.
        \end{align}
    \end{subequations}
\end{lemma}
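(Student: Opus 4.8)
The plan is to read off the recursion \cref{eq:dg-lf-error-a} directly from the way the defect is introduced, and then to compute $\vvd^n$ explicitly, matching the three contributions \cref{eq:dg-lf-error-b,eq:dg-lf-error-c,eq:dg-lf-error-d}. First I would establish the recursion. Subtracting the one-step scheme \cref{eq:LeapfrogSchemeOneStep} for $\ufh[n]$ from the identity \cref{eq:ProjectExactSolutionInMethod}, which is obtained by inserting the projected exact solution into the scheme, the two current source terms $\frac{\tau}{2}(\jvolh[n+1]+\jvolh[n])$ and $\frac{\tau}{2}(\jsurfh[n+1]+\jsurfh[n])$ are identical and cancel. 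With $\vve_h^n=\OProj\uf[n]-\ufh[n]$ this leaves precisely \cref{eq:dg-lf-error-a}, so the whole task reduces to identifying $\vvd^n$.

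Next I would expand $\vvd^n=\Rhr\OProj\uf[n]-\Rhl\OProj\uf[n+1]+\frac{\tau}{2}(\jvolh[n+1]+\jvolh[n])+\frac{\tau}{2}(\jsurfh[n+1]+\jsurfh[n])$ using the definition \cref{def:PropagationOp} of $\Rhlr$. This yields three groups: an identity part $\OProj(\uf[n]-\uf[n+1])$, a first-order part $\frac{\tau}{2}\pDgMOp\OProj(\uf[n]+\uf[n+1])$, and a second-order part coming from $\DgLFPert$. In the first-order part I would write $\OProj=I-(I-\OProj)$ and apply the key identity $\pDgMOp\uf=\OProj\dt\uf-\jvolh-\jsurfh$ from \cref{eq:ProjectionEquation}, which is itself a consequence of \cref{lem:Consistency} and \cref{eq:MaxwellFormulation}. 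The lifted currents $\jvolh,\jsurfh$ produced here cancel exactly the source terms carried along in $\vvd^n$; the remaining $-\frac{\tau}{2}\pDgMOp(I-\OProj)(\uf[n]+\uf[n+1])$ is the first summand of $\vvd_\Pi^n$ in \cref{eq:dg-lf-error-b}, and the surviving $\frac{\tau}{2}\OProj\dt(\uf[n]+\uf[n+1])$ is kept for the next step.

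Grouping the identity part with $\frac{\tau}{2}\OProj\dt(\uf[n]+\uf[n+1])$ gives $\OProj$ applied to $-(\uf[n+1]-\uf[n])+\frac{\tau}{2}(\dt\uf[n+1]+\dt\uf[n])$, which is the trapezoidal-rule remainder of $\int_{t_n}^{t_{n+1}}\dt\uf(s)\ds$ in Peano kernel form; since $\uf\in\confuns{3}{[0,T],\lebfuns{2}{\domain}^6}$ by \cref{eq:FullRegularity}, this is exactly $\vvdelta^n$ of \cref{eq:dg-lf-error-c}. For the second-order part I again split $\OProj=I-(I-\OProj)$: the $(I-\OProj)$ piece delivers the second summand of $\vvd_\Pi^n$, while the remaining $\frac{\tau^2}{4}\DgLFPert(\uf[n+1]-\uf[n])$ is rewritten using the consistency of the electric operator in \cref{lem:Consistency} together with the first exact equation $\dt\Hf=-\MOpE\Ef$, which gives $\DgMOpE\Ef=-\OProj\dt\Hf$. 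Because $\DgLFPert$ acts only through $\pDgMOpH\DgMOpE$ on the electric component and $\Rhl-\Rhr=-\tau\pDgMOp$ by \cref{def:PropagationOp}, this last term is identified with $\bigl(\Rhl-\Rhr\bigr)\vvd_h^n$, with $\vvd_h^n$ as in \cref{eq:dg-lf-error-d}.

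The main obstacle is the bookkeeping forced by non-consistency. Unlike the conforming situation, $\pDgMOpH$ is not consistent (\cref{lem:Consistency}), so applying it to the projected solution spawns the lifted surface current $\jsurfh$, and one must check that precisely this term annihilates the surface-current source of the scheme rather than contributing to the defect. The second delicate point is the perturbation operator $\DgLFPert$: recognizing that it can be absorbed into the propagation-operator difference $\Rhl-\Rhr$ acting on the \emph{single} component $\vvd_h^n$ relies on converting $\DgMOpE\Ef$ back into $\OProj\dt\Hf$ via consistency and the exact Maxwell equation. I would also check at the outset that all operators are applied either to $\uf\in\pVast$ or to $(I-\OProj)\uf$, which lie in the domains of $\pDgMOp$ and $\DgLFPert$, so that every expression occurring in the expansion is \welldefined{}.
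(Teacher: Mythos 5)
Your proposal is correct and takes essentially the same route as the paper's proof: both rest on the trapezoidal rule in Peano-kernel form for $\dt\uf$, the non-consistency identity $\OProj\pMOp\uf = \pDgMOp\uf + \jsurfh$ from \cref{lem:Consistency} (so that the lifted surface current cancels the scheme's source term rather than entering the defect), the splitting $\OProj = I - (I-\OProj)$ to isolate the projection defects in \cref{eq:dg-lf-error-b}, and the rewriting of $\frac{\tau^2}{4}\DgLFPert(\uf[n+1]-\uf[n])$ via $\DgMOpE\Ef = -\OProj\dt\Hf$ absorbed into $\bigl(\Rhl-\Rhr\bigr)\vvd_h^n$, which is exactly the step the paper imports from \cite[eq.~(5.21)]{Hoc.S.2016} and you spell out. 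The only difference is organizational — the paper first derives a one-step identity for $\OProj\uf$ and compares it with \cref{eq:ProjectExactSolutionInMethod}, while you expand the defect $\vvd^n$ directly — and you even reproduce the paper's sign conventions (with $\Rhl-\Rhr=-\tau\pDgMOp$ the stated sign of $\vvd_h^n$ is off by a factor $-1$ in both your argument and the paper, which is immaterial since only $\normmueps{\vvd_h^n}$ is used downstream).
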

\begin{proof}
    With the fundamental theorem of calculus and the error estimate of the trapezoidal rule, we obtain
    \begin{align*}
        \uf[n+1] - \uf[n] 
        &= \frac{\tau}{2}\pMOp\bigl(\uf[n+1] + \uf[n]\bigr)
            +\frac{\tau}{2}\bigl(\jvol[n+1] + \jvol[n]\bigr) \\
            &\qquad -\tau^2\int_{t_n}^{t_{n+1}}\frac{(s-t_n)(t_{n+1}-s)}{2\tau^2}\dt^3 \uf(s)\ds.
    \end{align*}
    Projecting both sides onto $\Vapprox^2$ and using \cref{lem:Consistency}, we infer that
    \begin{equation*}
        \OProj\uf[n+1] - \frac{\tau}{2}\pDgMOp\uf[n+1] 
        = \OProj\uf[n] + \frac{\tau}{2}\pDgMOp\uf[n] 
            + \frac{\tau}{2}\bigl(\jvolh[n+1] + \jvolh[n]\bigr)
            + \frac{\tau}{2}\bigl(\jsurfh[n+1] + \jsurfh[n]\bigr)
            - \trapedefect[n].
    \end{equation*}
    Writing $\pDgMOp = \pDgMOp \OProj + \pDgMOp(I-\OProj)$ and comparing with \cref{eq:ProjectExactSolutionInMethod} gives
    \begin{equation*}
        \vvd^n 
        = \trapedefect[n]
        - \frac{\tau}{2}\pDgMOp\Bigl(I-\OProj\Bigr)\bigl(\uf[n+1] + \uf[n]\bigr)
        + \frac{\tau^2}{4}\DgLFPert\OProj\bigl(\uf[n+1] - \uf[n]\bigr).
    \end{equation*}
    Moreover, as in \cite[eq.~(5.21)]{Hoc.S.2016}, we can write
    \begin{align*}
        \frac{\tau^2}{4}\DgLFPert\OProj\bigl(\uf[n+1] - \uf[n]\bigr) 
        = &   -\frac{\tau^2}{4}\DgLFPert\Bigl(I-\OProj\Bigr)\bigl(\uf[n+1] - \uf[n]\bigr)\\
            &- \frac{\tau}{4}\Bigl(\Rhl - \Rhr\Bigr) 
                \begin{pmatrix}\OProj \bigl(\dt\Hf[n+1] - \dt\Hf[n]\bigr)\\0\end{pmatrix}.
    \end{align*}
    This proves the claim.
\end{proof}

With this representation of the defect, we are able to prove the main result of this section.
\begin{proof}[Proof of \cref{thm:FullDiscretization}.]
    The proof proceeds in three steps and makes use of a generic constant $C$ which is independent of 
 	\(h\) and \(\tau\).   
    First, we bound the projection error \(\vve_\Pi^n\).
    Second, we solve the error recursion for the dG-\leapfrog-error and estimate the defects separately.
    The claim then follows in a third step via the application of the triangle inequality.  
    
    \Cref{lem:BrokenOrthApprox} directly implies
    \begin{equation}\label{eq:FullProjectionError}
        \normmueps{\vve_\Pi^n} 
            = \normmueps{\uf[n] - \OProj \uf[n]}
            \leq C h^{\regcoeff+1} |\uf[n]|_{\sobfuns{\regcoeff + 1}{\meshh}^6}.
    \end{equation}

    With \cref{lem:Defect}, the error recursion \eqref{eq:dg-lf-error-a}, and the discrete variation-of-constants formula we conclude
    \begin{equation*}
        \vve_h^n
            = \sum_{\ell=0}^{n-1} \Rh^{n-1-\ell}\Rhl^{-1}\vvd_\Pi^\ell
                + \sum_{\ell=0}^{n-1} \Rh^{n-1-\ell}\Rhl^{-1}\vvdelta^\ell
                + \sum_{\ell=0}^{n-1} \Rh^{n-1-\ell}\bigl(I-\Rh\bigr)\vvd_h^\ell.
    \end{equation*}
    
    To bound the first term on the right-hand side we use \cref{lem:ProjectionUnderDGCurl} to see
	\begin{equation*}
		\normmueps{\vvd_\Pi^n} \leq 
		C h^{\regcoeff} \frac{\tau}{2}\Bigl(
		\singlenorm{\uf[n+1]+\uf[n]}_{\sobfuns{\regcoeff+1}{\meshh}^6}
		+ \singlenorm{\Ef[n+1]-\Ef[n]}_{\sobfuns[]{\regcoeff+1}{\meshh}^3}
		\Bigr).
	\end{equation*}
    Utilizing \cref{eq:BoundsOnall} shows
    \begin{align*}
        \sum_{\ell=0}^{n-1} \normmueps{\Rh^{n-1-\ell}\Rhl^{-1}\vvd_\Pi^\ell}
        &\leq \Cstab \sqrt{3}\sum_{\ell=0}^{n-1} \frac{\tau}{2}\normmueps{\vvd_\Pi^\ell}
        \leq C h^{\regcoeff}.
    \end{align*}
	For the second term, we obtain 
    \begin{equation*}
        \sum_{\ell=0}^{n-1} \normmueps{\Rh^{n-1-\ell}\Rhl^{-1}\vvdelta^\ell}
        \leq C \tau^2 \int_{t_0}^{t_n} \normmueps{\dt^3 \uf(s)} \ds.
    \end{equation*}
    Hence, it remains to bound the third term. Using summation-by-parts, we infer
    \begin{equation*}
        \sum_{\ell=0}^{n-1} \Rh^{n-1-\ell}\bigl(I-\Rh\bigr)\vvd_h^\ell
        = -\Rh^n\vvd_h^0 
            + \vvd_h^{n-1} 
            + \sum_{\ell=0}^{n-2}\Rh^{n-1-\ell}(\vvd_h^{\ell+1} - \vvd_h^{\ell}).
    \end{equation*}
    We estimate all terms separately.
    Again, using  \eqref{eq:dg-lf-error-d} and the fundamental theorem of calculus implies
    \begin{equation*}
        \normmueps{\Rh^n\vvd_h^0 } 
            \leq C\tau\int_{t_0}^{t_1} \normmu{\dt^2\Hf[](s)}\ds
            \leq C\tau^2 \max_{s\in [t_0,t_1]} \normmu{\dt^2\Hf(s)}
    \end{equation*}
    and 
    \begin{equation*}
        \normmueps{\vvd_h^n}\leq C\tau^2 \max_{s\in [t_{n-1},t_n]} \normmu{\dt^2\Hf(s)}.
    \end{equation*}
    For the third sum, the fundamental theorem is used twice in order to exploit the difference of the defects.
    We obtain
    \begin{equation*}
        \frac{\tau}{4} \OProj\bigl((\dt\Hf[\ell+2] - 2\dt\Hf[\ell-1] + \dt\Hf[\ell]\bigr)
        = \frac{\tau^2}{4} \int_{t_\ell}^{t_{\ell+2}} \Bigl(1-\frac{|t_{\ell+1} -s|}{\tau}\Bigr) \OProj \dt^3 \Hf[](s)\ds.
    \end{equation*}
    Thus, we end up with the bound
    \begin{equation*}
        \sum_{\ell=0}^{n-2}\normmueps{\Rh^{n-1-\ell}(\vvd_h^{\ell+1} - \vvd_h^{\ell})}
        \leq C\tau^2 \int_{t_1}^{t_{n-1}} \normmu{\dt^3\Hf(s)} \ds.
    \end{equation*}

    Combining all estimates, we have shown that
    \begin{equation*}
        \normmueps{\vve_h^n} \leq C (h^{\regcoeff} + \tau^2 ). 
    \end{equation*}
    Together with \cref{eq:FullProjectionError} this proves the claim.
\end{proof}

\section{Numerical experiments}\label{sec:NumericalExperiments}

In this section, we present several numerical experiments that underline our theoretical findings.
The software was build with the Maxwell toolbox 
\href{https://gitlab.kit.edu/kit/ianm/ag-numerik/projects/dg-maxwell/timaxdg}{\texttt{TiMaxdG}}\footnote{\url{https://gitlab.kit.edu/kit/ianm/ag-numerik/projects/dg-maxwell/timaxdg}} 
which is build upon the finite element library 
\href{https://www.dealii.org}{\texttt{deal.II}}\footnote{\url{https://www.dealii.org}} \cite{dealII95}.
The full software with executables for reproduction purposes can be found under
\begin{center}
  \url{https://doi.org/10.35097/nwa3t4pv0jxwpf5v}.
\end{center}

All experiments are conducted in transverse electric (TE) polarization, i.e., $H_{1} = H_2 = E_3 = 0$,
to reduce the computational effort,
 with material parameters \(\Msuspm[] = \Epermpm[] = 1\).
Thus, we solve the system
\begin{subequations}
  \begin{align}
    \partial_t H_{3,\pm} &= -\partial_1 E_{2,\pm}  + \partial_2 E_{1,\pm},
      &&\text{in}\ \domainlr,\\
    \partial_t E_{1,\pm} &= \partial_2 H_{3,\pm} - J_{1,\pm},
      &&\text{in}\ \domainlr,\\
    \partial_t E_{2,\pm} &= -\partial_1 H_{3,\pm} - J_{2,\pm},
      &&\text{in}\ \domainlr,\\
    H_{3}(0) &= H_{3}^0,\ E_{1}(0) = E_{1}^0,\  E_{2}(0) = E_{2}^0,
      &&\text{in}\ \domain,\\
    \jmp{H_3}_\Fint &= J_{1,\operatorname{surf}},
      &&\text{on}\ \Fint.
  \end{align}
\end{subequations}

\subsection*{Cavity solution}\label{subsec:CavitySolution}
In this experiment, we use the well-known cavity solution, \cf{} \cite[Sec.~6]{Hoc.K.2022}, to construct a regular reference solution of the interface problem \cref{eq:MaxwellFormulation}.
On each cuboid \(\domainleft\), \(\domainright\) we make the ansatz
\begin{subequations}\label{eq:CavityAnsatz}
    \begin{align}
      H_{3,\pm}(x_1, x_2, t) 
        &= \frac{1}{\omega^{\pm}} 
          (k^{\pm}_1  A_2 - k_2  A^{\pm}_1)\cos{(k_2 x_2)}\cos{(k^{\pm}_1 (x_1+1))}\sin{(\omega^{\pm} t)},\\
      E_{1,\pm}(x_1, x_2, t) &= -A^{\pm}_1 \sin{(k_2 x_2)} \sin{(k^{\pm}_1 (x_1+1))} \cos{(\omega^{\pm} t)},\\
      E_{2,\pm}(x_1, x_2, t) &= -A_2 \cos{(k_2 x_2)} \sin{(k^{\pm}_1 (x_1+1))} \cos{(\omega^{\pm} t)},
    \end{align}
    with spatial wave numbers
    \begin{equation}
      k^{\pm}_1 = \frac{\pi k^\pm}{2},
      \quad 
       k_2 = \pi m,
      \quad k^\pm, m\in\bbN,
    \end{equation}
    temporal wave numbers
    \begin{equation}
      \omega^{\pm} = \sqrt{\bigl(k^{\pm}_1\bigr)^2 + k_2^2},
    \end{equation}
    and amplitudes
    \begin{equation}
      A^{\pm}_1 = - A_2 \frac{  k_2}{k^{\pm}_1},\quad A_2 \in \bbR.
    \end{equation}
  \end{subequations}
  We choose the data such that 
  \begin{subequations} 
    \begin{align}\label{eq:CavityData1}
      J_{\operatorname{surf}}(x_2, t) 
      &= \lim_{x_1\to 0^+} H_{3,+}(x_1,x_2,t) - \lim_{x_1\to 0^-} H_{3,-}(x_1,x_2,t),\\\label{eq:CavityData2}
      J_{1,\pm} &= J_{2,\pm} = 0.
    \end{align}
  \end{subequations}
  The remaining constants are chosen such that \(J_{\operatorname{surf}} \neq 0\).
  In our simulation we used the specific values
  \begin{equation*}
    k^- = 2,
    \qquad k^+ = 4,
    \qquad m = 1,
    \qquad A_2 = 1.
  \end{equation*}

  We chose a mesh sequence of 20 meshes with \meshsize s in the range of \(1\cdot 10^{-1}\) and \(1\cdot 10^{-2}\),
  a fixed time-step width \(\tau = 1\cdot 10^{-4}\),
  and polynomial degrees between one and four.
  For each \meshsize, we calculated two different numerical solutions differing in the treatment of the surface current \cref{eq:CavityData1}.
  One series of simulations is done with the lifting defined in \cref{eq:LiftOp} and one series is done with the interpolation of the surface current described in \cref{eq:InterpolateSemiDiscreteMaxwell}.
  At several \timestep s, we calculated  the \(L^2\)-error against the reference solution \cref{eq:CavityAnsatz}.
  \Cref{fig:CavityErrorPlot} depicts the different \meshsize s on the \(x\)-axis and the maximal \(L^2\)-error obtained on the \(y\)-axis.
  We observe for \(k\)-th order ansatz polynomials \(k\)-th order spatial convergence until a plateau is reached where the error of the time discretization dominates.
  This agrees with both, \cref{thm:SpatialConvergence,cor:InterpolationSpatialConvergence}
  Additionally, \cref{fig:CavityErrorPlot} shows that the interpolation of the surface current leads to the same spatial error.
  This is expected since the surface current \cref{eq:CavityData1} is smooth.

\begin{figure}
  \centering\scalebox{0.85}{
    \begin{tikzpicture}

\definecolor{darkgray176}{RGB}{176,176,176}
\definecolor{gray}{RGB}{128,128,128}
\definecolor{lightgray204}{RGB}{204,204,204}

\definecolor{color01}{HTML}{1f77b4}
\definecolor{color02}{HTML}{ff7f0e}
\definecolor{color03}{HTML}{2ca02c}
\definecolor{color04}{HTML}{d62728}
\definecolor{color05}{HTML}{9467bd}
\definecolor{color06}{HTML}{8c564b}
\definecolor{color07}{HTML}{e377c2}
\definecolor{color08}{HTML}{7f7f7f}
\definecolor{color09}{HTML}{bcbd22}
\definecolor{color10}{HTML}{17becf}

\begin{axis}[
legend cell align={left},
legend style={
  fill opacity=0.8,
  draw opacity=1,
  text opacity=1,
  at={(1.09,0.5)},
  anchor=west,
  draw=lightgray204
},
log basis x={10},
log basis y={10},
tick align=outside,
tick pos=left,
x grid style={darkgray176},
xlabel={\(\displaystyle h\)},
xmajorgrids,
xmin=0.00893597900601822, xmax=0.113381012625214,
xmode=log,
xtick style={color=black},
xtick={0.0001,0.001,0.01,0.1,1,10},
minor x tick num=10,
xticklabels={
  \(\displaystyle {10^{-4}}\),
  \(\displaystyle {10^{-3}}\),
  \(\displaystyle {10^{-2}}\),
  \(\displaystyle {10^{-1}}\),
  \(\displaystyle {10^{0}}\),
  \(\displaystyle {10^{1}}\)
},
y grid style={darkgray176},
ylabel={\(\displaystyle \max_n \|\mathbf{u}^n_h - \mathbf{u}(t_n)\|_{L^2(Q)^6}\)},
ymajorgrids,
ymin=4e-9, ymax=0.309175469033012,
ymode=log,
ytick style={color=black},
ytick={0.000000001, 0.00000001, 0.0000001, 0.000001, 0.00001, 0.0001, 0.001, 0.01, 0.1},
yticklabels={
  \(\displaystyle {10^{-9}}\),
  \(\displaystyle {10^{-8}}\),
  \(\displaystyle {10^{-7}}\),
  \(\displaystyle {10^{-6}}\),
  \(\displaystyle {10^{-5}}\),
  \(\displaystyle {10^{-4}}\),
  \(\displaystyle {10^{-3}}\),
  \(\displaystyle {10^{-2}}\),
  \(\displaystyle {10^{-1}}\)
}
]

\addplot [semithick, color01, mark=10-pointed star, mark size=1.5, mark options={solid}]
table {%
0.101015 0.059981
0.0883883 0.0523818
0.0785674 0.0465003
0.0707107 0.0418112
0.0614875 0.0363225
0.0543928 0.0321108
0.048766 0.0287761
0.042855 0.0252775
0.038222 0.0225384
0.0336718 0.0198503
0.0294628 0.0173656
0.0261891 0.015434
0.0231838 0.0136613
0.0207973 0.0122541
0.0183664 0.010821
0.0162553 0.00957671
0.0144308 0.00850142
0.0127407 0.00750551
0.0113137 0.00666472
0.0100299 0.00590832
};
\addlegendentry{$k = 1$}

\addplot [semithick, color01, mark=o, mark size=3, mark options={solid, color=red}]
table {%
0.101015 0.061216
0.0883883 0.0532165
0.0785674 0.0470902
0.0707107 0.0422431
0.0614875 0.0366079
0.0543928 0.032309
0.048766 0.0289192
0.042855 0.0253749
0.038222 0.0226076
0.0336718 0.0198977
0.0294628 0.0173973
0.0261891 0.0154563
0.0231838 0.0136768
0.0207973 0.0122653
0.0183664 0.0108287
0.0162553 0.00958206
0.0144308 0.00850517
0.0127407 0.00750808
0.0113137 0.00666653
0.0100299 0.00590958
};
\addlegendentry{$k = 1$, interpolation}

\addplot [semithick, color02, mark=10-pointed star, mark size=1.5, mark options={solid}]
table {%
0.101015 0.000565927
0.0883883 0.000479417
0.0785674 0.000327014
0.0707107 0.000268407
0.0614875 0.000196602
0.0543928 0.000155265
0.048766 0.00012579
0.042855 9.36902e-05
0.038222 7.84926e-05
0.0336718 5.77729e-05
0.0294628 4.45609e-05
0.0261891 3.5334e-05
0.0231838 2.64406e-05
0.0207973 2.26963e-05
0.0183664 1.64756e-05
0.0162553 1.36151e-05
0.0144308 1.07957e-05
0.0127407 8.24299e-06
0.0113137 6.55059e-06
0.0100299 5.07358e-06
};
\addlegendentry{$k = 2$}

\addplot [semithick, color02, mark=o, mark size=3, mark options={solid, color=red}]
table {%
0.101015 0.000639237
0.0883883 0.000519692
0.0785674 0.000359711
0.0707107 0.000291036
0.0614875 0.000210951
0.0543928 0.000164704
0.048766 0.000132214
0.042855 9.80065e-05
0.038222 8.13219e-05
0.0336718 5.97557e-05
0.0294628 4.58375e-05
0.0261891 3.61985e-05
0.0231838 2.70572e-05
0.0207973 2.31022e-05
0.0183664 1.67691e-05
0.0162553 1.38046e-05
0.0144308 1.09251e-05
0.0127407 8.33256e-06
0.0113137 6.61188e-06
0.0100299 5.11645e-06
};
\addlegendentry{$k = 2$, interpolation}

\addplot [semithick, color03, mark=10-pointed star, mark size=1.5, mark options={solid}]
table {%
0.101015 6.16602e-05
0.0883883 4.22648e-05
0.0785674 2.9909e-05
0.0707107 2.18143e-05
0.0614875 1.44021e-05
0.0543928 9.98772e-06
0.048766 7.20377e-06
0.042855 4.8878e-06
0.038222 3.47092e-06
0.0336718 2.3699e-06
0.0294628 1.58776e-06
0.0261891 1.11537e-06
0.0231838 7.73774e-07
0.0207973 5.59044e-07
0.0183664 3.86084e-07
0.0162553 2.69139e-07
0.0144308 1.90444e-07
0.0127407 1.35279e-07
0.0113137 1.09972e-07
0.0100299 9.49847e-08
};
\addlegendentry{$k = 3$}

\addplot [semithick, color03, mark=o, mark size=3, mark options={solid, color=red}]
table {%
0.101015 6.22216e-05
0.0883883 4.24552e-05
0.0785674 3.0003e-05
0.0707107 2.19455e-05
0.0614875 1.44687e-05
0.0543928 1.00221e-05
0.048766 7.2217e-06
0.042855 4.89765e-06
0.038222 3.47293e-06
0.0336718 2.37282e-06
0.0294628 1.58879e-06
0.0261891 1.11561e-06
0.0231838 7.74087e-07
0.0207973 5.59272e-07
0.0183664 3.86152e-07
0.0162553 2.69187e-07
0.0144308 1.90449e-07
0.0127407 1.35275e-07
0.0113137 1.09971e-07
0.0100299 9.49854e-08
};
\addlegendentry{$k = 3$, interpolation}

\addplot [semithick, color04, mark=10-pointed star, mark size=1.5, mark options={solid}]
table {%
0.101015 6.78096e-07
0.0883883 4.54181e-07
0.0785674 2.54205e-07
0.0707107 1.75834e-07
0.0614875 1.05782e-07
0.0543928 7.89142e-08
0.048766 7.85214e-08
0.042855 7.83435e-08
0.038222 7.82965e-08
0.0336718 7.82743e-08
0.0294628 7.82668e-08
0.0261891 7.82646e-08
0.0231838 7.82637e-08
0.0207973 7.82634e-08
0.0183664 7.82632e-08
0.0162553 7.82632e-08
0.0144308 7.82632e-08
0.0127407 7.82632e-08
0.0113137 7.82632e-08
0.0100299 7.82632e-08
};
\addlegendentry{$k = 4$}

\addplot [semithick, color04, mark=o, mark size=3, mark options={solid, color=red}]
table {%
0.101015 6.88447e-07
0.0883883 4.57641e-07
0.0785674 2.56454e-07
0.0707107 1.76989e-07
0.0614875 1.06233e-07
0.0543928 7.91101e-08
0.048766 7.85865e-08
0.042855 7.83612e-08
0.038222 7.83022e-08
0.0336718 7.8276e-08
0.0294628 7.82672e-08
0.0261891 7.82647e-08
0.0231838 7.82637e-08
0.0207973 7.82634e-08
0.0183664 7.82633e-08
0.0162553 7.82632e-08
0.0144308 7.82632e-08
0.0127407 7.82632e-08
0.0113137 7.82632e-08
0.0100299 7.82632e-08
};
\addlegendentry{$k = 4$, interpolation}

\addplot [semithick, gray, dashed]
table {%
0.101015 0.1101888
0.0883883 0.0964153908928377
0.0785674 0.0857025939228827
0.0707107 0.0771323781632431
0.0614875 0.0670715620452408
0.0543928 0.0593325482417463
0.048766 0.0531947435608573
0.042855 0.0467469289115478
0.038222 0.0416931773855368
0.0336718 0.0367297454421621
0.0294628 0.0321384999914864
0.0261891 0.0285674949470871
0.0231838 0.0252892649749047
0.0207973 0.0226860320768203
0.0183664 0.0200343669387715
0.0162553 0.0177315448264119
0.0144308 0.0157413506413899
0.0127407 0.0138977621557194
0.0113137 0.0123411674163243
0.0100299 0.0109407775589764
};

\addplot [semithick, gray, dashed]
table {%
0.101015 0.0011506266
0.0883883 0.00088095197901529
0.0785674 0.000696061216677391
0.0707107 0.000563810223386761
0.0614875 0.000426320625607311
0.0543928 0.000333614950633009
0.048766 0.000268161818495487
0.042855 0.000207093120563769
0.038222 0.000164736369141493
0.0336718 0.000127848412504306
0.0294628 9.78837747093633e-05
0.0261891 7.73399383106091e-05
0.0231838 6.06082740363264e-05
0.0207973 4.8772676213075e-05
0.0183664 3.80373935955328e-05
0.0162553 2.97956354118356e-05
0.0144308 2.34824544657483e-05
0.0127407 1.83041367933624e-05
0.0113137 1.44335111005611e-05
0.0100299 1.1343729960236e-05
};

\addplot [semithick, gray, dashed]
table {%
0.101015 0.00011199888
0.0883883 7.5030945347936e-05
0.0785674 5.26966757408705e-05
0.0707107 3.84159418090527e-05
0.0614875 2.5259024949744e-05
0.0543928 1.74855912690684e-05
0.048766 1.26010715680192e-05
0.042855 8.55185982171109e-06
0.038222 6.06731159995304e-06
0.0336718 4.14815594397906e-06
0.0294628 2.77893333377165e-06
0.0261891 1.95172127871558e-06
0.0231838 1.35397280039635e-06
0.0207973 9.7741037531471e-07
0.0183664 6.73175328278889e-07
0.0162553 4.66703483280175e-07
0.0144308 3.26533165408464e-07
0.0127407 2.24717008210946e-07
0.0113137 1.5735119696053e-07
0.0100299 1.09634157692165e-07
};

\addplot [semithick, gray, dashed]
table {%
0.101015 1.2392046e-06
0.0883883 7.26404738316086e-07
0.0785674 4.53491415085276e-07
0.0707107 2.97536390684669e-07
0.0614875 1.70116675310377e-07
0.0543928 1.04175407308662e-07
0.048766 6.73082162670098e-08
0.042855 4.01426040682708e-08
0.038222 2.54011381677172e-08
0.0336718 1.5299064545253e-08
0.0294628 8.96800031674101e-09
0.0261891 5.59863094182393e-09
0.0231838 3.43825458353961e-09
0.0207973 2.22652394632174e-09
0.0183664 1.35424019020952e-09
0.0162553 8.30958817004488e-10
0.0144308 5.16132462447178e-10
0.0127407 3.13597580199635e-10
0.0113137 1.94992621713536e-10
0.0100299 1.20444218028669e-10
};
\end{axis}

\end{tikzpicture}
    }
    \caption{Error in the $L^2$-norm of the cavity solution \cref{eq:CavityAnsatz} plotted against the \meshsize{} for a fixed time-step width \(\tau = 1\cdot 10^{-4}\). The dotted lines correspond to the lift defined in \cref{eq:LiftOp}, and the circled lines to the interpolation in \cref{eq:InterpolateSemiDiscreteMaxwell}.}
    \label{fig:CavityErrorPlot}
\end{figure}
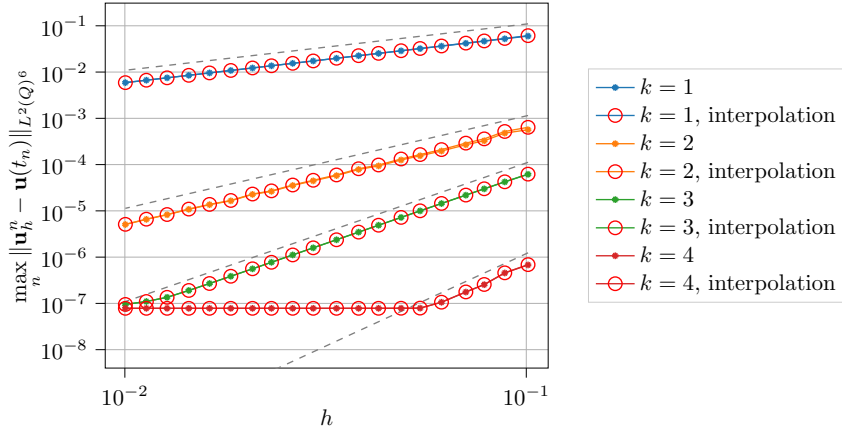

\subsection*{Low regularity surface current}\label{subsec:LowRegularity}

The aim of this experiment is to show the effect of spatial regularity of surfaces currents on the spatial convergence order.
We follow the ideas in \cite{Hoc.L.O.2020} and construct for \(\alpha \geq 0\) trigonometric polynomials
\begin{equation}\label{eq:TrigonometricPolynomial}
  f_\alpha(x) = \sum_{j=-M/2+1}^{M/2} \nu_{\alpha,j} e^{ijx},
  \qquad x\in [-\pi,\pi],
  \qquad M = 2^m, m\in\bbN,
\end{equation}
with coefficients
\begin{equation*}
  \begin{aligned}
    \nu_{\alpha,0} &= \nu_{\alpha,M/2} = 0,\\
    \nu_{\alpha,j} &= \frac{i \cdot r_j}{(1+j^2)^{\frac{1}{2}(\frac{1}{2}+\alpha)}} &&\text{for } j=1,\ldots,\frac{M}{2}-1,\\
    \nu_{\alpha,j} &= -\nu_{\alpha,j+M/2} &&\text{for } j=-\frac{M}{2}+1,\ldots,-1.
  \end{aligned}
\end{equation*}
The factors \((r_j)_{j=1}^{M/2-1}\) are uniformly sampled numbers from the interval \([-1,1]\).
In the limit \(M\to\infty\), the sequence of trigonometric polynomials converges to a function in the Sobolev space \(H^{\alpha}_{\operatorname{per}}(-\pi,\pi)\) with norm
\begin{equation*}
  \norm{g}_{\alpha}^2 = 2\pi \sum_{j\in\bbZ} (1+j^2)^\alpha |\widehat{g}_j|^2,
  \quad g(x) = \sum_{j\in\bbZ} \widehat{g}_j e^{ijx}.
\end{equation*}
The coefficients are chosen in such a way that the trace of \(f_\alpha\) vanishes on the boundary \(\{-\pi,\pi\}\) and, thus, satisfy homogenous Dirichlet boundary conditions.
By construction, the norm \(\norm{f_\alpha}_{\eta}\) is bounded uniformly in \(M\) for \(\eta\leq \alpha\) and grows otherwise.
\Cref{fig:DiscreteRegularity} demonstrates this behavior.
\begin{figure}
  \centering\scalebox{0.70}{
    \input{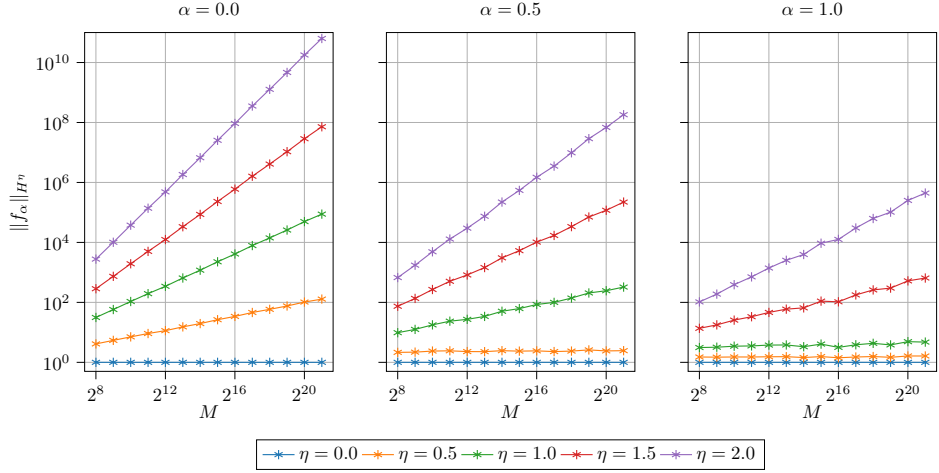}
  }
  \caption{Illustration of discrete regularity: the norm $\|f_\alpha\|_{H^{\eta}}$, for $f_\alpha$ given in \cref{eq:TrigonometricPolynomial},
  	 for different values of $\alpha$ and $\eta$ plotted against the number $M$ of Fourier modes.}
  \label{fig:DiscreteRegularity}
\end{figure}
The surface current is defined as
\begin{equation*}
  J_{\operatorname{surf}}(t,x_2) 
    = \frac{1}{\norm{f_\alpha}_0} f_\alpha(2\pi x_2 - \pi) \sin^2(\pi t)
\end{equation*}
for \(x_2\in [0,1]\) and \(t\in[0,T]\).

The fully discretize scheme \cref{eq:LeapfrogScheme} is used without an additional interpolation on the interface.
In our experiment, we chose $M = 2^m$ for \(m = 22\) and a series of regularity coefficients \(\alpha \in [0,4]\).
For every \(\alpha\), we calculated a reference solution on fine mesh 
with  polynomial degree \(k_{\operatorname*{ref}} = 3\), \meshsize{} \(h_{\operatorname*{ref}} = 1\cdot 10^{-3}\), and \stepsize{} \(\tau_{\operatorname*{ref}} = 5\cdot 10^{-5}\).
We then compared the \(L^2\)-error of a sequence of solutions on 8 different meshes with \meshsize s in the range between \(1\cdot 10^{-1}\) and  \(5\cdot 10^{-3}\) at the end time \(T=1\) against the reference solution and estimated the order of convergence (EOC).
The experiment was performed for first and second order polynomials with the fixed \timestep{} size \(\tau = 2.5\cdot 10^{-4}\).

\cref{fig:RegPlots} shows the dependence of the convergence order on the spatial regularity of the surface current.
For \(\alpha < 1/2\) little to no convergence is observed.
The order then grows linearly for \(\alpha\in [0.5,1.5]\) until it stagnates.

This agrees with \cref{thm:SpatialConvergence} provided that one can improve on the results 
in \cite[Sec.~2]{Dor.Z.2023} to solutions with piecewise regularity $ \psobfuns{s}$ for \(s=\min\monosetc{\alpha+1/2,2}\), \(\alpha > 1/2\).
In particular, 
looking at the proofs one would expect that 
the regularity requirements on $\Jsurf$ can be reduced by one order of Sobolev regularity.

\begin{figure}
  \centering\scalebox{0.85}{
    \begin{tikzpicture}

    \definecolor{darkgray176}{RGB}{176,176,176}
    \definecolor{lightgray204}{RGB}{204,204,204}
    \definecolor{orange}{RGB}{255,165,0}
    \definecolor{darkorange25512714}{RGB}{255,127,14}
    \definecolor{steelblue31119180}{RGB}{31,119,180}

    \definecolor{color01}{HTML}{1f77b4}
    \definecolor{color02}{HTML}{ff7f0e}
    \definecolor{color03}{HTML}{2ca02c}
    \definecolor{color04}{HTML}{d62728}
    \definecolor{color05}{HTML}{9467bd}
    \definecolor{color06}{HTML}{8c564b}
    \definecolor{color07}{HTML}{e377c2}
    \definecolor{color08}{HTML}{7f7f7f}
    \definecolor{color09}{HTML}{bcbd22}
    \definecolor{color10}{HTML}{17becf}
    
    \begin{axis}[
    legend cell align={left},
    legend style={
      fill opacity=0.8,
      draw opacity=1,
      text opacity=1,
      at={(1.09,0.5)},
      anchor=west,
      draw=lightgray204
    },
    tick align=outside,
    tick pos=left,
    x grid style={darkgray176},
    xlabel={\(\displaystyle \alpha\)},
    xmin=-0.2, xmax=4.2,
    xtick style={color=black},
    y grid style={darkgray176},
    ylabel={Estimated order \(q\)},
    ymin=-0.200098889948952, ymax=2.20000470904519,
    ytick style={color=black}
    ]

    \addplot [semithick, gray, opacity=1, dashed, forget plot]
    table {%
    0.5 0
    0.5 4
    };

    \addplot [thick, gray, gray, dashed, opacity=1, forget plot]
    table{
      0.0 0
      0.5 0
      1.5 1
      4   1
    };

    \addplot [thick, gray, gray, dashed, opacity=1, forget plot]
    table{
      0.0 0
      0.5 0
      2.5 2
      4   2
    };

    \addplot [semithick, color01, mark=*, mark size=1.5, mark options={solid}]
    table {%
    0 -9.41753387451172e-05
    0.100000023841858 0.0128304958343506
    0.299999952316284 0.0495185852050781
    0.5 0.118548512458801
    0.700000047683716 0.237093329429626
    0.899999976158142 0.39873468875885
    1 0.487832188606262
    1.10000002384186 0.576982140541077
    1.20000004768372 0.661380529403687
    1.29999995231628 0.735798001289368
    1.39999997615814 0.795694708824158
    1.5 0.839012384414673
    1.60000002384186 0.867058515548706
    1.70000004768372 0.88339376449585
    1.79999995231628 0.89190661907196
    1.89999997615814 0.895661592483521
    2 0.896693229675293
    2.09999990463257 0.896235704421997
    2.20000004768372 0.895007491111755
    2.29999995231628 0.906915187835693
    2.40000009536743 0.891699314117432
    2.5 0.889979124069214
    2.75 0.886052966117859
    3 0.882893443107605
    3.25 0.880485773086548
    3.5 0.878710031509399
    3.75 0.877436995506287
    4 0.876549005508423
    };
    \addlegendentry{deg. \(p=1\)}

    \addplot [semithick, color02, mark=asterisk, mark size=2, mark options={solid}]
    table {%
    0 0.00367856025695801
    0.100000023841858 0.0234479904174805
    0.299999952316284 0.071052074432373
    0.5 0.144925832748413
    0.700000047683716 0.258962154388428
    0.899999976158142 0.41022515296936
    1 0.495564222335815
    1.10000002384186 0.584991812705994
    1.20000004768372 0.677031278610229
    1.29999995231628 0.770250201225281
    1.39999997615814 0.862627625465393
    1.5 0.950343608856201
    1.60000002384186 1.02625620365143
    1.70000004768372 1.08046507835388
    1.79999995231628 1.10666215419769
    1.89999997615814 1.10893678665161
    2 1.09796285629272
    2.09999990463257 1.08277106285095
    2.20000004768372 1.06814467906952
    2.29999995231628 1.01932394504547
    2.40000009536743 1.04612720012665
    2.5 1.03870093822479
    2.75 1.02721798419952
    3 1.02154850959778
    3.25 1.01859092712402
    3.5 1.01692819595337
    3.75 1.01591622829437
    4 1.01525640487671
    };
    \addlegendentry{deg. \(p=2\)}
    \end{axis}
  \end{tikzpicture}
  }
  \caption{Plot of the estimated order of convergence against the discrete regularity parameter $\alpha$. For each $\alpha$, we computed 	approximations using $8$ different \meshsize s between  	$1\cdot 10^{-1}$ and  $5\cdot 10^{-3}$ with a fixed \timestep{} size $\tau = 2.5\cdot 10^{-4}$. They are then compared to a reference solution computed with $k_{\operatorname*{ref}}=3$, $h_{\operatorname*{ref}} = 1\cdot 10^{-3}$ and $\tau_{\operatorname*{ref}} = 5\cdot 10^{-5}$.}
  \label{fig:RegPlots}
\end{figure}
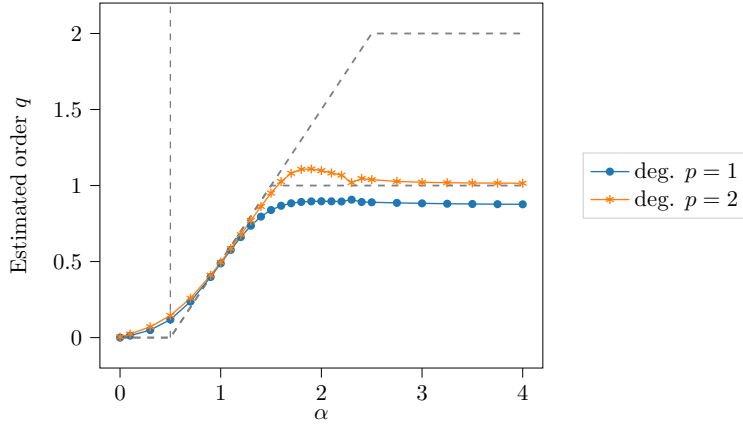

\subsection*{Polynomial solution}\label{subsec:PolynomialSolution}
In this example, we investigate the temporal errors by constructing a polynomial solution which does not create spatial errors.
We construct a solution that is polynomial in space in order to isolate the error introduced by time discretization.
\begin{subequations} \label{eq:pol_sol}
  The ansatz polynomials in space are given by 
  \begin{equation}
    q_-(x_1) = 2 + x_1,
    \quad q_+(x_1) = -1 + x_1,
    \quad r(x_2) = x_2(1-x_2)
  \end{equation}
  and the ansatz function in time by
  \begin{equation}
    p(t) = \sin(2\pi t).
  \end{equation}
  We define on \(\domainlr\) the fields
  \begin{align}
    H_{3,\pm}(x_1, x_2, t) &= p(t)q_\pm(x_1)r'(x_2),\\
    E_{1,\pm}(x_1, x_2, t) &= p'(t)q_\pm(x_1)r(x_2),\\
    E_{2,\pm}(x_1, x_2, t) &= 0
  \end{align}
  and define the surface current again as
  \begin{equation}
    J_{\operatorname{surf}}(x_2, t)
    = \lim_{x_1\to 0^+} H_{3,+}(x_1,x_2,t) - \lim_{x_1\to 0^-} H_{3,-}(x_1,x_2,t).
  \end{equation}
  Additionally, we define the volume current on \(\domainlr\) as
  \begin{align} 
    J_{1,\pm}(x_1,x_2,t) &= p(t)q_\pm(x_1)r''(x_2),\\
    J_{2,\pm}(x_1,x_2,t) &= -p(t)q_\pm'(x_1)r'(x_2).
  \end{align}
\end{subequations}

The fully discretized scheme \cref{eq:LeapfrogScheme} is used without an additional interpolation on the interface.
We chose a mesh sequence with 5 different \meshsize s between \(5\cdot 10^{-1}\) and \(5\cdot 10^{-2}\).
For every mesh in the sequence, we compared the \(L^2\)-error between the reference solution and the numerical scheme at several \timestep s for a total of 40 different \timestep{} sizes in the range between \(1\cdot 10^{-1}\) and \(1\cdot 10^{-4}\).
Throughout all calculations, we used a polynomial degree of 3 in order to discretize the reference solution exactly in space.
\Cref{fig:PolynomialErrorPlot} shows on the x-axis the \timestep size \(\tau\) and the maximal \(L^2\)-error on y-axis.
The method converges with second order in time if the CFL condition \cref{def:CFLCondition} is satisfied.

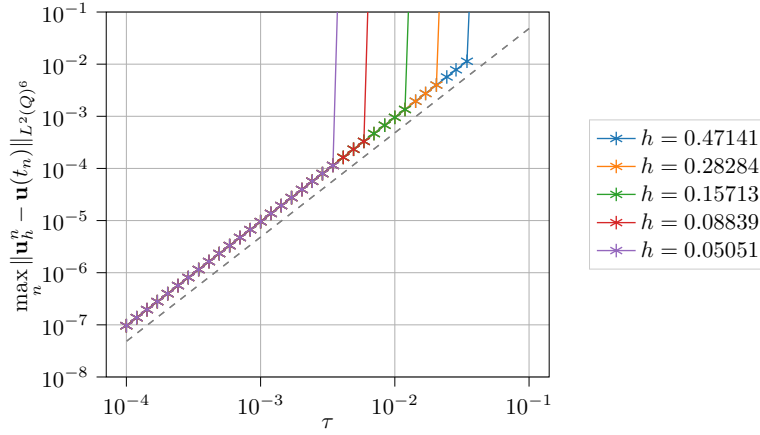
\begin{figure}
  \centering\scalebox{0.85}{
      \begin{tikzpicture}

\definecolor{color01}{HTML}{1f77b4}
\definecolor{color02}{HTML}{ff7f0e}
\definecolor{color03}{HTML}{2ca02c}
\definecolor{color04}{HTML}{d62728}
\definecolor{color05}{HTML}{9467bd}
\definecolor{color06}{HTML}{8c564b}
\definecolor{color07}{HTML}{e377c2}
\definecolor{color08}{HTML}{7f7f7f}
\definecolor{color09}{HTML}{bcbd22}
\definecolor{color10}{HTML}{17becf}

\definecolor{darkgray176}{RGB}{176,176,176}
\definecolor{gray}{RGB}{128,128,128}
\definecolor{lightgray204}{RGB}{204,204,204}

\begin{axis}[
legend cell align={left},
legend style={
  fill opacity=0.8,
  draw opacity=1,
  text opacity=1,
  at={(1.09,0.5)},
  anchor=west,
  draw=lightgray204
},
log basis x={10},
log basis y={10},
tick align=outside,
tick pos=left,
x grid style={darkgray176},
xlabel={\(\displaystyle \tau\)},
xmajorgrids,
xmin=7.07945784384137e-05, xmax=0.141253754462275,
xmode=log,
xtick style={color=black},
xtick={1e-00, 1e-01, 1e-02, 1e-03, 1e-04, 1e-05},
xticklabels={
  \(\displaystyle {10^{0}}\),
  \(\displaystyle {10^{-1}}\),
  \(\displaystyle {10^{-2}}\),
  \(\displaystyle {10^{-3}}\),
  \(\displaystyle {10^{-4}}\),
  \(\displaystyle {10^{-5}}\),
},
y grid style={darkgray176},
ylabel={\(\displaystyle \max_n \|\mathbf{u}^n_h - \mathbf{u}(t_n)\|_{L^2(Q)^6}\)},
ymajorgrids,
ymin=1e-08, ymax=0.1,
ymode=log,
ytick style={color=black},
ytick={1e-00, 1e-01, 1e-02, 1e-03, 1e-04, 1e-05, 1e-06, 1e-07, 1e-08},
yticklabels={
  \(\displaystyle {10^{0}}\),
  \(\displaystyle {10^{-1}}\),
  \(\displaystyle {10^{-2}}\),
  \(\displaystyle {10^{-3}}\),
  \(\displaystyle {10^{-4}}\),
  \(\displaystyle {10^{-5}}\),
  \(\displaystyle {10^{-6}}\),
  \(\displaystyle {10^{-7}}\),
  \(\displaystyle {10^{-8}}\)
}
]
\addplot [semithick, color01, mark=asterisk, mark size=3, mark options={solid}]
table {%
0.1 1000
0.0833333 1000
0.0714286 1000
0.0588235 1000
0.05 1000
0.0416667 1000
0.0344828 0.011364
0.0285714 0.00779554
0.0243902 0.0056782
0.0204082 0.00397395
0.0169492 0.00274026
0.0142857 0.00194636
0.0119048 0.00135146
0.01 0.000953504
0.00840336 0.000673289
0.00699301 0.000466233
0.00588235 0.000329886
0.00492611 0.00023302
0.00413223 0.000162785
0.00346021 0.000114142
0.00289017 7.96316e-05
0.00242718 5.67631e-05
0.00203252 3.93825e-05
0.00170068 2.75726e-05
0.0014245 1.93769e-05
0.00119332 1.37139e-05
0.001 9.53303e-06
0.000837521 6.68686e-06
0.000701754 4.73305e-06
0.000587889 3.31436e-06
0.000492368 2.32819e-06
0.000412541 1.63685e-06
0.000345543 1.15075e-06
0.000289436 8.06839e-07
0.000242424 5.66317e-07
0.000203087 3.97191e-07
0.000170126 2.789e-07
0.000142511 1.95718e-07
0.000119374 1.37345e-07
0.0001 9.63704e-08
};
\addlegendentry{$h=0.47141$}

\addplot [semithick, color02, mark=asterisk, mark size=3, mark options={solid}]
table {%
0.1 1000
0.0833333 1000
0.0714286 1000
0.0588235 1000
0.05 1000
0.0416667 1000
0.0344828 1000
0.0285714 1000
0.0243902 1000
0.0204082 0.00397396
0.0169492 0.00274027
0.0142857 0.00194637
0.0119048 0.00135146
0.01 0.000953507
0.00840336 0.00067329
0.00699301 0.000466234
0.00588235 0.000329887
0.00492611 0.00023302
0.00413223 0.000162786
0.00346021 0.000114142
0.00289017 7.96317e-05
0.00242718 5.6763e-05
0.00203252 3.93826e-05
0.00170068 2.75727e-05
0.0014245 1.93769e-05
0.00119332 1.37139e-05
0.001 9.53304e-06
0.000837521 6.68687e-06
0.000701754 4.73305e-06
0.000587889 3.31444e-06
0.000492368 2.32819e-06
0.000412541 1.63686e-06
0.000345543 1.15075e-06
0.000289436 8.06839e-07
0.000242424 5.66316e-07
0.000203087 3.97191e-07
0.000170126 2.789e-07
0.000142511 1.95717e-07
0.000119374 1.37345e-07
0.0001 9.63702e-08
};
\addlegendentry{$h=0.28284$}

\addplot [semithick, color03, mark=asterisk, mark size=3, mark options={solid}]
table {%
0.1 1000
0.0833333 1000
0.0714286 1000
0.0588235 1000
0.05 1000
0.0416667 1000
0.0344828 1000
0.0285714 1000
0.0243902 1000
0.0204082 1000
0.0169492 1000
0.0142857 1000
0.0119048 0.00135147
0.01 0.000953505
0.00840336 0.000673289
0.00699301 0.000466233
0.00588235 0.000329886
0.00492611 0.000233019
0.00413223 0.000162785
0.00346021 0.000114142
0.00289017 7.96317e-05
0.00242718 5.67643e-05
0.00203252 3.93826e-05
0.00170068 2.75727e-05
0.0014245 1.93769e-05
0.00119332 1.37147e-05
0.001 9.53304e-06
0.000837521 6.68687e-06
0.000701754 4.73306e-06
0.000587889 3.3146e-06
0.000492368 2.32819e-06
0.000412541 1.63696e-06
0.000345543 1.15079e-06
0.000289436 8.06883e-07
0.000242424 5.66332e-07
0.000203087 3.97213e-07
0.000170126 2.78907e-07
0.000142511 1.95723e-07
0.000119374 1.37349e-07
0.0001 9.63748e-08
};
\addlegendentry{$h=0.15713$}
\addplot [semithick, gray, dashed, forget plot]
table {%
0.1 0.0481874
0.0833333 0.0334634454514498
0.0714286 0.0245854278315958
0.0588235 0.0166738241566132
0.05 0.01204685
0.0416667 0.0083658814409498
0.0344828 0.00572978782994404
0.0285714 0.00393365743879577
0.0243902 0.00286658099497419
0.0204082 0.00200697932006648
0.0169492 0.00138430536770519
0.0142857 0.000983414359698943
0.0119048 0.00068293237528137
0.01 0.000481874
0.00840336 0.000340282357037167
0.00699301 0.000235646933547718
0.00588235 0.000166738241566132
0.00492611 0.00011693424204346
0.00413223 8.22815504961641e-05
0.00346021 5.76950305894744e-05
0.00289017 4.02513333871856e-05
0.00242718 2.83881743511e-05
0.00203252 1.99068777596145e-05
0.00170068 1.39373017550654e-05
0.0014245 9.778188412685e-06
0.00119332 6.86194658406378e-06
0.001 4.81874e-06
0.000837521 3.38006385442956e-06
0.000701754 2.37303032287471e-06
0.000587889 1.66542148288706e-06
0.000492368 1.16818905551193e-06
0.000412541 8.20101730105802e-07
0.000345543 5.7535738661647e-07
0.000289436 4.03681260593119e-07
0.000242424 2.83194438201642e-07
0.000203087 1.98745700667323e-07
0.000170126 1.39468097323916e-07
0.000142511 9.78656464579676e-08
0.000119374 6.86677768509562e-08
0.0001 4.81874e-08
};
\addplot [semithick, color04, mark=asterisk, mark size=3, mark options={solid}]
table {%
0.1 1000
0.0833333 1000
0.0714286 1000
0.0588235 1000
0.05 1000
0.0416667 1000
0.0344828 1000
0.0285714 1000
0.0243902 1000
0.0204082 1000
0.0169492 1000
0.0142857 1000
0.0119048 1000
0.01 1000
0.00840336 1000
0.00699301 1000
0.00588235 0.000329887
0.00492611 0.000233021
0.00413223 0.000162786
0.00346021 0.000114142
0.00289017 7.96318e-05
0.00242718 5.67643e-05
0.00203252 3.93826e-05
0.00170068 2.75727e-05
0.0014245 1.93769e-05
0.00119332 1.37145e-05
0.001 9.53305e-06
0.000837521 6.68687e-06
0.000701754 4.73309e-06
0.000587889 3.31456e-06
0.000492368 2.3282e-06
0.000412541 1.63694e-06
0.000345543 1.15078e-06
0.000289436 8.06875e-07
0.000242424 5.66329e-07
0.000203087 3.97209e-07
0.000170126 2.78906e-07
0.000142511 1.95722e-07
0.000119374 1.37348e-07
0.0001 9.63739e-08
};
\addlegendentry{$h=0.08839$}

\addplot [semithick, color05, mark=asterisk, mark size=3, mark options={solid}]
table {%
0.1 1000
0.0833333 1000
0.0714286 1000
0.0588235 1000
0.05 1000
0.0416667 1000
0.0344828 1000
0.0285714 1000
0.0243902 1000
0.0204082 1000
0.0169492 1000
0.0142857 1000
0.0119048 1000
0.01 1000
0.00840336 1000
0.00699301 1000
0.00588235 1000
0.00492611 1000
0.00413223 1000
0.00346021 0.000114142
0.00289017 7.96318e-05
0.00242718 5.67643e-05
0.00203252 3.93826e-05
0.00170068 2.75727e-05
0.0014245 1.93769e-05
0.00119332 1.37145e-05
0.001 9.53305e-06
0.000837521 6.68687e-06
0.000701754 4.73308e-06
0.000587889 3.31455e-06
0.000492368 2.3282e-06
0.000412541 1.63693e-06
0.000345543 1.15078e-06
0.000289436 8.06874e-07
0.000242424 5.6633e-07
0.000203087 3.97209e-07
0.000170126 2.78907e-07
0.000142511 1.95722e-07
0.000119374 1.37349e-07
0.0001 9.6374e-08
};
\addlegendentry{$h=0.05051$}
\addplot [semithick, gray, dashed, forget plot]
table {%
0.1 0.048187
0.0833333 0.0334631676738942
0.0714286 0.0245852237497998
0.0588235 0.0166736857484471
0.05 0.01204675
0.0416667 0.00836581199639424
0.0344828 0.00572974026740421
0.0285714 0.00393362478579985
0.0243902 0.00286655719969995
0.0204082 0.00200696266028139
0.0169492 0.00138429387668997
0.0142857 0.000983406196449963
0.0119048 0.000682926706310848
0.01 0.00048187
0.00840336 0.000340279532378796
0.00699301 0.000235644977460164
0.00588235 0.000166736857484471
0.00492611 0.00011693327138107
0.00413223 8.22808674831732e-05
0.00346021 5.76945516673447e-05
0.00289017 4.02509992638804e-05
0.00242718 2.83879387029899e-05
0.00203252 1.99067125141125e-05
0.00170068 1.39371860625669e-05
0.0014245 9.778107244675e-06
0.00119332 6.86188962355888e-06
0.001 4.8187e-06
0.000837521 3.38003579677255e-06
0.000701754 2.37301062452765e-06
0.000587889 1.665407658348e-06
0.000492368 1.16817935846203e-06
0.000412541 8.20094922502735e-07
0.000345543 5.75352610617876e-07
0.000289436 4.03677909665195e-07
0.000242424 2.83192087425811e-07
0.000203087 1.9874405089414e-07
0.000170126 1.39466939609681e-07
0.000142511 9.78648340825627e-08
0.000119374 6.86672068448812e-08
0.0001 4.8187e-08
};
\end{axis}

\end{tikzpicture}
  }
  \caption{Error in the $L^2$-norm of the polynomial solution \cref{eq:pol_sol} plotted against the time step size $\tau$ for 5 different \meshsize s between $5\cdot 10^{-1}$ and $5\cdot 10^{-2}$ using elements of order $k=3$.}
  \label{fig:PolynomialErrorPlot}
\end{figure}

\section*{Acknowledgement}

We thank Kurt Busch for the fruitful discussions on plasmonic nanogaps and the insights into the exciting physical phenomena, and Constantin Carle for many helpful discussions on the implementation.

\bibliographystyle{amsalpha}
\bibliography{literature}

\end{document}